\newtheorem{thm}{Theorem}[section]
\newtheorem{cor}[thm]{Corollary}
\theoremstyle{definition}
\newtheorem{rmk}[thm]{Remark}
\newtheorem*{nota*}{Notation}
\newcommand{\tabincell}[2]{\begin{tabular}{@{}#1@{}}#2\end{tabular}} 
\newcommand{\bM}{\overline{\mathcal M}}
\newcommand{\g}{\text{g}}
\newcommand{\vir}{{\operatorname{vir}}}
\newcommand\C{\mathbb C}
\newcommand\N{\mathbb N}
\newcommand\PP{\mathbb P}
\newcommand\Q{\mathbb Q}
\newcommand\Z{\mathbb Z}
\newcommand\cO{\mathcal O}
\newcommand\Aut{\operatorname{Aut}}
\newcommand\Spec{\operatorname{Spec}\,}
\newcommand\Hom{\operatorname{Hom}}
\newcommand\Ker{\operatorname{Ker}\,}
\newcommand\Rep{\operatorname{Rep}}
\newcommand\sG{\mathcal{G}}
\newcommand\Tg{T_{\log}}
\newcommand\lne{\mathrm{line}}
\newcommand\conic{\mathrm{conic}}
\newcommand\bard{\underline{d}}
\newcommand\Exp{\mathrm{Exp}}
\title[All-genus WDVV recursion, quivers, and BPS invariants]{All-genus WDVV recursion, quivers, and BPS invariants}
\author{Pierrick Bousseau}
\address{University of Georgia, Department of Mathematics, Athens, GA 30605, US}
\email{Pierrick.Bousseau@uga.edu}
\author{Longting Wu}
\address{Southern University of Science and Technology, Department of Mathematics \& SUSTech International Center for Mathematics, Shenzhen, Guangdong 518055, CN}
\email{wult@sustech.edu.cn}
\begin{document}

\begin{abstract}
Let $X$ be a smooth projective surface and $D$ a smooth rational ample divisor in $X$.
We prove an all-genus generalization of the genus $0$ WDVV equation for primary Gromov--Witten invariants of the local 3-fold $\cO_X(-D)$.
The proof relies on a correspondence between all-genus Gromov--Witten invariants and refined Donaldson--Thomas invariants of acyclic quivers. In particular, the corresponding BPS invariants are expressed in terms of Betti numbers of moduli spaces of quiver representations.
\end{abstract}

\maketitle

\section{Introduction}
\subsection{All-genus recursion}
Let $X$ be a smooth projective surface over $\mathbb{C}$ and $D$ be an ample divisor in $X$. Let $\beta\in H_2(X,\Z)$ be a nonzero curve class of $X$. We use $\bM_{g,m}(\mathcal{O}_X(-D),\beta)$ to denote the moduli space of $m$-pointed genus $g$ stable maps of class $\beta$ to the total space of the line bundle $\mathcal{O}_X(-D)$. Since $D$ is ample, we have $D\cdot\beta>0$, and so $\bM_{g,m}(\mathcal{O}_X(-D),\beta)$ coincides with the moduli space $\bM_{g,m}(X,\beta)$ of $m$-pointed genus $g$ stable maps to $X$ with class $\beta$. However, the Gromov--Witten virtual class $[\bM_{g,m}(\mathcal{O}_X(-D),\beta)]^{\vir}$ is in general different from $[\bM_{g,m}(X,\beta)]^{\vir}$.
We consider the following primary Gromov--Witten invariant:
\[N_{g,\beta}^{\cO_X(-D)}\coloneqq \int_{[\bM_{g,m}(\mathcal{O}_X(-D),\beta)]^{\vir}}\prod_{i=1}^m ev_i^*([pt])\,,\]
where $[pt] \in H^4(X)$ is the point class of $X$, and $ev_i : \bM_{g,m}(\mathcal{O}_X(-D),\beta) \rightarrow X$ is the evaluation at the $i$-th marked point.
The virtual dimension of $\bM_{g,m}(\mathcal{O}_X(-D),\beta)$ is $\Tg\cdot \beta +m$, where $\Tg=-K_X-D$, and so we need $m=\Tg\cdot \beta$
in order to have a possibly nonzero invariant. In particular, we need $\Tg\cdot \beta\geq 0$.
By fixing $\beta$ and summing over $g$, we get the following generating series
\[F_{\beta}^{\cO_X(-D)}\coloneqq \sum_{g\geq 0} N_{g,\beta}^{\cO_X(-D)}h^{2g-2+\Tg\cdot\beta}.\]
By the Gopakumar--Vafa conjecture proven in \cites{Z,IP2,DW19,DIW}, we know that $F_{\beta}^{\cO_X(-D)}\in \Q((-q)^{-\frac{1}{2}})$, i.e.\ a rational function of $(-q)^{-\frac{1}{2}}$. Here $q$ is related to $h$ by $q=e^{\sqrt{-1}h}$. But these $F_{\beta}^{\cO_X(-D)}$ are quite hard to compute in general. In this paper, we give a simple uniform recursive formula for $F_{\beta}^{\cO_X(-D)}$ when $D$ is further assumed to be of virtual genus 0. Here the \emph{virtual genus} $g(D)$ of a divisor $D$ is defined by
$g(D)\coloneqq 1-\frac{1}{2}\Tg\cdot D.$
\begin{thm}\label{thm:main}
Let $X$ be a smooth projective surface over $\C$ and $D$ be an ample divisor in $X$ with virtual genus 0. Then, we have the following recursive formula:
\begin{equation}\label{eqn:recurfm}
F_{\beta}^{\cO_X(-D)}=\sum_{\substack{\beta_1+\beta_2=\beta\\ \beta_1,\beta_2>0}} F_{\beta_1}^{\cO_X(-D)}F_{\beta_2}^{\cO_X(-D)}\left(q^{D\cdot\beta_1}+q^{-D\cdot\beta_1}-2\right){\Tg\cdot\beta-3\choose \Tg\cdot\beta_1-1}
\end{equation}
if $\Tg\cdot\beta\geq 3$.
Here ${*\choose *}$ stand for the binomial coefficients.
\end{thm}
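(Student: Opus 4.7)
The plan is to translate the recursion~\eqref{eqn:recurfm} through the sequence of correspondences announced in the abstract: local GW invariants of $\cO_X(-D)$ are reformulated as log GW invariants of $(X,D)$, these are identified with refined Donaldson--Thomas invariants of an acyclic quiver, and the recursion is then read off from wall-crossing in the associated quantum scattering diagram.

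First, I would replace $F_\beta^{\cO_X(-D)}$ by a generating series of all-genus log Gromov--Witten invariants of the pair $(X,D)$. Since $D$ is ample, every stable map in class $\beta$ meets $D$ properly, so the all-genus log--local correspondence (in the style of van Garrel--Graber--Ruddat and its refined higher-genus version) exchanges the local GW series for a log GW series of $(X,D)$. Under this exchange, the $\Tg\cdot\beta$ point insertions are replaced by a single marking of maximal tangency along $D$ together with $\Tg\cdot\beta-1$ interior point insertions, up to a universal $q$-factor encoding the twist by $\cO_X(-D)$.

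Next, using the hypothesis that $D\cong\PP^1$ has virtual genus $0$, I would match these log GW invariants with refined DT/BPS invariants of an acyclic quiver $Q$ associated to $(X,D)$. The bridge is the two-dimensional quantum scattering diagram whose initial rays encode the maximal-contact invariants for the classes $\beta_i$; because $g(D)=0$ only finitely many primitive rays appear, and by the Kontsevich--Soibelman wall-crossing formula together with the pentagon identity this scattering diagram is equivalent to the Harder--Narasimhan scattering diagram of $Q$, whose arrow count records the intersection numbers $D\cdot\beta_i$. Under this dictionary the BPS content of $F_\beta^{\cO_X(-D)}$ becomes a Poincar\'e polynomial of a moduli space of semistable $Q$-representations, as claimed.

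Finally, on the quiver side I would extract \eqref{eqn:recurfm} by expanding the commutator of the initial quantum dilogarithm factors and comparing coefficients of the dimension vector $\beta$. The factor $q^{D\cdot\beta_1}+q^{-D\cdot\beta_1}-2=-(q^{D\cdot\beta_1/2}-q^{-D\cdot\beta_1/2})^2$ is the squared quantum integer produced by the Euler form of $Q$ at each exchange, while the binomial ${\Tg\cdot\beta-3\choose \Tg\cdot\beta_1-1}$ arises from distributing the $\Tg\cdot\beta-1$ interior point insertions (and one jet derivative from the maximal tangency marking) between the two sides of a splitting $\beta=\beta_1+\beta_2$. I expect the main obstacle to lie in Step~2: establishing the all-genus GW/quiver DT dictionary at the required level of generality, including the precise identification of the quiver $Q$, beyond the cases (such as $(\PP^2,E)$) treated in prior work. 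Once this correspondence is in hand, the recursion reduces to wall-crossing plus binomial bookkeeping, and Step~1 is standard.
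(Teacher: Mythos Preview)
Your Steps 1 and 2 match the paper's architecture: it proves a higher-genus local/relative correspondence (Theorem~\ref{thm:loc/rel}) and then invokes the GW/quiver correspondence of \cite{Bou21} (Theorem~\ref{thm:GW/Kr}) to rewrite $F_\beta^{\cO_X(-D)}$ as a refined DT invariant $\Omega_{M_\beta^{\cO_X(-D)}}(q)$ of an explicit acyclic quiver. You are also right that the reduction to finitely many pairs $(X,D)$ uses $g(D)=0$, though this is done by classification and deformation (Section~\ref{subsec:definv}), not by counting rays.

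The genuine gap is in Step 3. The paper does \emph{not} extract \eqref{eqn:recurfm} from a commutator of quantum dilogarithms or from wall-crossing between stability chambers; that picture does not obviously produce the binomial $\binom{\Tg\cdot\beta-3}{\Tg\cdot\beta_1-1}$, and your interpretation of this binomial as distributing point insertions is a genus-$0$ WDVV heuristic that the paper explicitly says it cannot lift to all genera on the GW side. Instead, the quivers $Q$ here have a very specific shape: $m=\Tg\cdot\beta$ ``framing'' vertices of dimension $1$ on the left, attached to one or two sink vertices. The recursion is obtained by playing with this $m$: one compares the moduli for $m$, $m\pm1$ using (i) reflection functors at the sink (duality, Theorem~\ref{thm:dual}), (ii) the identification of the $(m{+}1)$-vertex moduli with a framed $m$-vertex moduli (Theorem~\ref{thm:f/uf}), and (iii) a small resolution making the framed moduli a $\PP^{D\cdot\beta-1}$-bundle over $M_\beta^{\cO_X(-D)}$ (Theorem~\ref{thm:smres}). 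These feed into the framed/unframed identity
\[
1+\sum_{\underline d}\Omega_{M_{\underline d,\underline n}^{\theta,\mathrm{fr}}}\,(-1)^{\underline n\cdot\underline d}x^{\underline d}
=\Exp\Bigl(\sum_{\underline d}P_{\PP^{\underline n\cdot\underline d-1}}\,\Omega_{M_{\underline d}^{\theta\text{-}sst}}\,(-1)^{\underline n\cdot\underline d}x^{\underline d}\Bigr),
\]
and after taking a logarithmic derivative the factorials $1/(\Tg\cdot\beta)!$ recombine into exactly the binomial coefficient, while the $\PP^{D\cdot\beta_1-1}$-bundle structure produces $(P_{\PP^{D\cdot\beta_1-1}})^2$, which is $-(q^{D\cdot\beta_1}+q^{-D\cdot\beta_1}-2)/(q^{1/2}-q^{-1/2})^2$. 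This is the Reineke--Weist mechanism \cite{RW21}, generalized here to the other pairs. So the obstacle is not Step~2 (which is already in \cite{Bou21}) but rather identifying and proving the three geometric facts (reflection isomorphism, framed$=$unframed, small resolution) that make Step~3 go through.
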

\begin{rmk}
We use the convention that 
\[{m\choose n}=0,\quad \text{if\,\,}n<0\text{\,\,or\,\,}n>m\,.\]
So only the classes $\beta_i$ ($i=1,2$) with $\Tg\cdot\beta_i>0$ are involved in the summation on the right-hand side of \eqref{eqn:recurfm}.
\end{rmk}

\begin{rmk}
The recursion formula might not hold if we relax the ample condition and only require $D$ to be nef. 
We give a counterexample in Appendix \ref{sec:ex}.
\end{rmk}


According to \cite[Corollary 2.3]{LP}, the condition that $X$ has an ample divisor $D$ with virtual genus $0$ actually forces $(X,D)$ to be the following two types:
\begin{enumerate}
\item $(X,D)=(\PP^2,\lne)\text{\,or\,} (\PP^2,\conic)$;
\item $X$ is a Hirzebruch surface and $D \cdot f=1$
for every fiber $f$ of the Hirzebruch surface $X$.
\end{enumerate}
In particular, we can always choose $D$ to be effective with $D \simeq \PP^1$. By taking the lowest coefficient of $h$ in the above recursion, we obtain the following recursion for genus $0$ invariants:
\begin{cor}\label{cor:g0rec}
Under the same assumptions as in Theorem \ref{thm:main}, we have 
\begin{equation}\label{eqn:recurfmg0}
 N_{0,\beta}^{\cO_X(-D)}=-\sum_{\substack{\beta_1+\beta_2=\beta \\
 \beta_1,\beta_2>0}} N_{0,\beta_1}^{\cO_X(-D)}N_{0,\beta_2}^{\cO_X(-D)}\left(D\cdot\beta_1\right)^2{\Tg\cdot\beta-3\choose \Tg\cdot\beta_1-1}
\end{equation}
if $\Tg\cdot\beta\geq 3$.
\end{cor}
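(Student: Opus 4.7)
The plan is to derive the corollary by extracting the lowest-order term in the formal variable $h$ from the all-genus recursion \eqref{eqn:recurfm} established in Theorem \ref{thm:main}. By the very definition of the generating series, the Taylor expansion of $F_\beta^{\cO_X(-D)}$ around $h = 0$ begins in degree $\Tg\cdot\beta - 2$ with the genus $0$ contribution:
\[ F_\beta^{\cO_X(-D)} \;=\; N_{0,\beta}^{\cO_X(-D)}\, h^{\Tg\cdot\beta - 2} + O\!\left(h^{\Tg\cdot\beta}\right).\]
Hence the left-hand side of \eqref{eqn:recurfm} contributes $N_{0,\beta}^{\cO_X(-D)}$ at order $h^{\Tg\cdot\beta - 2}$.

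For the right-hand side, I would expand the quantum factor using the substitution $q = e^{\sqrt{-1}h}$: one has
\[ q^{D\cdot\beta_1} + q^{-D\cdot\beta_1} - 2 \;=\; 2\cos\!\left(h\, D\cdot\beta_1\right) - 2 \;=\; -(D\cdot\beta_1)^2\, h^2 + O(h^4).\]
Combined with the leading term $N_{0,\beta_1}^{\cO_X(-D)} N_{0,\beta_2}^{\cO_X(-D)}\, h^{\Tg\cdot\beta - 4}$ of the product $F_{\beta_1}^{\cO_X(-D)} F_{\beta_2}^{\cO_X(-D)}$, each summand on the right-hand side of \eqref{eqn:recurfm} contributes, at the target order $h^{\Tg\cdot\beta - 2}$, the coefficient
\[-\, N_{0,\beta_1}^{\cO_X(-D)} N_{0,\beta_2}^{\cO_X(-D)}\, (D\cdot\beta_1)^2 \binom{\Tg\cdot\beta - 3}{\Tg\cdot\beta_1 - 1}.\]

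Equating the coefficients of $h^{\Tg\cdot\beta - 2}$ on both sides of \eqref{eqn:recurfm} then reproduces \eqref{eqn:recurfmg0} exactly. There is no substantive obstacle beyond this coefficient bookkeeping: the corollary is a direct consequence of isolating the leading-order term in $h$ of Theorem \ref{thm:main}, which is why the statement is recorded as a corollary rather than a separate theorem.
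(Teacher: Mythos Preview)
Your argument is correct and matches the paper's own derivation: the paper states the corollary immediately after Theorem \ref{thm:main} with the remark ``By taking the lowest coefficient of $h$ in the above recursion,'' which is precisely the coefficient extraction you carry out. The paper also records in Appendix \ref{sec:recg0} a second, independent proof directly on the Gromov--Witten side (via the WDVV recursion of \cite{FW} combined with the genus-zero local/relative correspondence of \cite{GGR}), but your approach is the intended one for the corollary as stated.
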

At the moment, a direct proof of the all-genus recursion \eqref{eqn:recurfm} in Gromov--Witten theory is not known. Instead, we will first translate the recursion into a recursion for quiver Donaldson--Thomas (DT) invariants via the local/relative correspondence \cites{GGR,BFGW} together with a GW/quiver correspondence derived in \cite{Bou21}
from the GW/Kronecker correspondence for log Calabi-Yau surfaces
\cites{GrP,RSW,RW13,RW21,Bou20}. The recursion on the quiver DT-side can then be deduced using the geometric properties of the quiver moduli spaces, in a way parallel to the work of Reineke--Weist in \cite{RW21}.

But the genus-zero recursion can be directly deduced on the Gromov--Witten side. Actually, it follows from
from the corresponding WDVV recursion for genus-zero relative Gromov--Witten invariants of the pair $(X,D)$ in \cite{FW} plus the local/relative correspondence in \cite{GGR}. The details of such a proof can be found in Appendix \ref{sec:recg0}. It will be clear from the proof that the requirement of $D$ to be rational is necessary. It follows that one can view \eqref{eqn:recurfm} as an example of all-genus WDVV equation.

\begin{rmk}
Another structure underlying higher genus Gromov--Witten invariants is given by the conjectural Virasoro constraints \cite{EHX, EJX}. In genus $0$, the Virasoro constraints are known to essentially follow from the WDVV equation \cite{LTvir}, and so one can also view the Virasoro constraints as a higher genus generalization of the WDVV equation. There is however a major difference between the recursion \eqref{eqn:recurfm} and the Virasoro constraints: the Virasoro constraints are naturally formulated genus by genus in terms of generating series summing over curve classes, whereas the recursion \eqref{eqn:recurfm} is formulated curve class by curve class in terms of generating series summing over the genus.
Actually, as $D$ is ample, the Gromov--Witten invariants of $\cO_X(-D)$ in Theorem \ref{thm:main} can be viewed as Gromov--Witten invariants of the projective compactification $\PP(\cO_X  \oplus \cO_X(-D))$, which is a projective toric variety
and so for which the Virasoro constraints are known \cite{Giv5, viraosorotoric, T12}. In Appendix \ref{app:virasoro}, we compare the recursions for genus one invariants 
derived from \eqref{eqn:recurfm} and from the Virasoro constraint: the two recursions are different and the fact that they produce the same invariants seems highly non-trivial.
\end{rmk}

\begin{rmk}
Recently, there has been an extensive study \cites{BFGW, CI, LhoP, FRZZ, Lho, Wan2} of Gromov-Witten invariants of local Calabi-Yau threefolds, i.e.\,, $\cO_X(-D)$ with $D$ be an anticanonical divisor. One of the key features in their studies is the so-called holomorphic anomaly equation which constrains higher genus Gromov-Witten invariants of local Calabi-Yau threefolds recursively. While in our Fano-like cases, the new feature is the above all-genus WDVV recursion which seems to replace the role of holomorphic anomaly equation. It will be interesting to explore such kinds of all-genus recursions for other Fano threefolds.
\end{rmk}



\subsection{Local/Relative correspondence}
As a first step towards the recursion \eqref{eqn:recurfm}, we relate local Gromov--Witten invariants of $\mathcal{O}_X(-D)$ to relative Gromov--Witten invariants of the pair $(X,D)$. 
Let $\bM_{g,m}(X/D,\beta)$ be the moduli space of $m$-pointed genus $g$ relative stable maps of class $\beta$ to $(X,D)$ with only one contact condition of maximal tangency along $D$. We consider the following maximal contact relative Gromov--Witten invariants:
\[N_{g,\beta}^{X/D}\coloneqq \int_{[\bM_{g,m}(X/D,\beta)]^{\vir}}\prod_{i=1}^m ev_i^*([pt])(-1)^g \lambda_g\]
where $\lambda_g$ is the top Chern class of the Hodge bundle. Similar to the local case, we also need $m=\Tg\cdot\beta$ by the dimension constraint to have possible nonzero invariants. By fixing $\beta$ and summing over $g$, we also form the following generating series:
\[F_{\beta}^{X/D}\coloneqq \sum_{g\geq 0} N_{g,\beta}^{X/D}h^{2g-1+\Tg\cdot\beta}.\]
It is related to $F_{\beta}^{\cO_X(-D)}$ by the following theorem:
\begin{thm}\label{thm:loc/rel}
Under the same assumptions as in Theorem \ref{thm:main} for the pair $(X,D)$, we have 
\[F_{\beta}^{\cO_X(-D)}=F_{\beta}^{X/D}\frac{(-1)^{D\cdot\beta-1}}{2\sin(\frac{(D\cdot\beta)h}{2})}\,.\]
\end{thm}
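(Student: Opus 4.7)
The plan is to deduce this from the all-genus local/relative correspondence \cites{GGR,BFGW}: in genus $0$ this is the theorem of van Garrel--Graber--Ruddat, and the higher-genus extension that exchanges the $(-1)^g\lambda_g$ insertion on the relative side with the $1/(2\sin)$ factor on the local side is the content of \cite{BFGW} and related works. I sketch how the pieces assemble in our setting and indicate the nontrivial inputs.

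First, I would compactify $\cO_X(-D)$ to the projective bundle $p:Y := \PP(\cO_X \oplus \cO_X(-D)) \to X$ with zero section $X_0 \cong X$ and infinity section $X_\infty \cong X$, of normal bundles $\cO_X(-D)$ and $\cO_X(D)$ respectively. Ampleness of $D$ forces $X_\infty \cdot \iota_*\beta = 0$ and rules out any stable map of class $\iota_*\beta$ meeting $X_\infty$; a standard compactification argument then identifies $N_{g,\beta}^{\cO_X(-D)}$ with the ordinary Gromov--Witten invariant of $Y$ in class $\iota_*\beta$ with the same primary insertions pulled back from $X$.

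Next, I would apply the degeneration formula to the deformation of $Y$ to the normal cone of $X_\infty$, whose central fibre is $Y \cup_{X_\infty} R$ with $R := \PP(\cO_X \oplus \cO_X(D)) \to X$. Since the primary insertions are pulled back from $X$ and $X_\infty \cdot \iota_*\beta = 0$, a dimension and ampleness analysis shows that the only contributing splitting is the one in which the entire class $\beta$ lies on the $Y$-side, meets $X_\infty$ at a single contact of maximal tangency $D\cdot\beta$, and is glued to a genus-$g_2$, degree-$(D\cdot\beta)$ multi-cover of a fibre of $R$ on the other side. The formula then expresses $N_{g,\beta}^{\cO_X(-D)}$ as a sum over $g=g_1+g_2$ of the product of a maximal-tangency relative invariant of $(Y, X_\infty)$ and a one-point rubber Hodge integral on $R$.

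The two crucial computations are: (i) the identification of the $(Y, X_\infty)$ relative invariant with $N^{X/D}_{g_1,\beta}$, obtained by contracting $\PP^1$-fibres, which trades the obstruction bundle $R^1\pi_* f^*\cO_X(-D)$ of the local theory for the $(-1)^{g_1}\lambda_{g_1}$ insertion of the relative theory of $(X,D)$---this is the core mechanism of the correspondence of \cites{GGR,BFGW}; and (ii) the all-genus evaluation of the rubber factor, which reduces via the projection $R \to X$ to the classical one-point local $\PP^1$ Hodge integral of Faber--Pandharipande, giving
\[
\sum_{g_2 \geq 0}(\mathrm{rubber})_{g_2}\,h^{2g_2-1}\;=\;\frac{(-1)^{D\cdot\beta-1}}{2\sin\!\left(\tfrac{(D\cdot\beta)h}{2}\right)},
\]
with the sign tracking the Euler class of the rubber normal bundle $\cO_X(D)$. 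Summing over $g = g_1 + g_2$ then assembles the claimed product formula. The main obstacle will be the careful bookkeeping of the degeneration formula---verifying that the maximal-tangency splitting is the unique contribution, where ampleness of $D$ is essential via $D\cdot\beta > 0$, and matching signs and numerical factors in the obstruction-bundle/Hodge-insertion exchange---but both are by now standard steps in log/local correspondences.
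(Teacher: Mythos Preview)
Your degeneration setup contains an internal inconsistency. You correctly observe that $X_\infty\cdot\iota_*\beta=0$, but then assert that the surviving splitting has the $Y$-side curve meeting $X_\infty$ with maximal tangency $D\cdot\beta>0$. These cannot both hold: since the $R$-side class you propose is a multiple of a fibre of $R\to X$, it contributes $0$ to $H_2(Y)$ after contracting the bubble, so the $Y$-side class must itself be $\iota_*\beta$, which has zero intersection with $X_\infty$. The deformation to the normal cone of $X_\infty$ is therefore trivial for these curve classes---the only term in the degeneration formula is the relative invariant of $(Y,X_\infty)$ with empty contact profile, which tautologically returns the local invariant you started with. The geometric degeneration underlying \cite{BFGW} is not of the compactified total space along a section, but of the \emph{base} $X$ along $D$; the star-graph correction terms in the BFGW formula encode components mapping into the $\PP^1$-bundle over $D$ that appears in that central fibre, and your step (i) identifying a $(Y,X_\infty)$ relative invariant with $N^{X/D}_{g_1,\beta}$ has no justification in your picture.

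Even setting the degeneration aside, the heart of the argument is missing: you never use that $D$ has virtual genus $0$, i.e.\ $\Tg\cdot D=2$. The paper applies the BFGW virtual-cycle formula directly, expressing $\prod_i \ev_i^*([\pt])\cap[\bM_{g,m}(\cO_X(-D),\beta)]^{\vir}$ as the main relative term plus corrections indexed by star graphs whose central vertex carries a class $b(v)\in H_2(D,\Z)$. A dimension count against the point insertions shows a correction graph can contribute only if $\Tg\cdot\iota_*b(v)\le 0$; since $\iota_*b(v)=n[D]$ with $n\ge 0$ and $\Tg\cdot D=2>0$, this forces $n=0$, and then a K\"unneth argument on $\bM_{g(v),|E|}(D,0)$ forces $|E|=1$. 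Only these single-edge, degree-zero-center graphs survive, and their contribution reduces to the Faber--Pandharipande Hodge integral $\int_{\bM_{g_2,1}}\psi_1^{2g_2-2}\lambda_{g_2}$, whose generating series produces the $\bigl(2\sin\tfrac{(D\cdot\beta)h}{2}\bigr)^{-1}$ factor. Ampleness alone, which is all you invoke for this step, does not kill the higher correction graphs; rationality of $D$ is essential.
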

The above theorem is proven in Section \ref{sec:loc/rel} using the local/relative correspondence \cites{GGR,BFGW}. The generating series $F_{\beta}^{X/D}$ can be further related to Poincar\'e polynomials of certain quiver moduli by the work of first author \cite{Bou21}.

\subsection{Deformation equivalence}\label{subsec:definv}
Before we continue, we give a more detailed description for those pairs $(X,D)$ such that $X$ is a Hirzebruch surface and $D \cdot f=1$. 

We have $X=F_n=\mathbb{P}(\mathcal{O}(n)\oplus \mathcal{O})$ for some $n\in \mathbb{Z}_{\geq 0}$. Let $C_n$ and $C_{-n}$ be the sections of $X$ with intersection numbers $n$ and $-n$ respectively. The requirements that $D \cdot f=1$ and $D$ is ample imply that $D=C_n+sf$ for some $s>0$.

A deformation of $F_{n}$ to $F_{n+2}$ is given by
\[\left\{\left([x_0:x_1],[y_0:y_1:y_2],t\right)\in \PP^1\times\PP^2\times \C\Big|x_0^{n+2}y_1-x_1^{n+2}y_0+tx_0^{n+1}x_1y_2=0\right\}\,.\]
Under such a deformation the divisor $C_n+(s+1)f$ of $F_n$ deforms to the divisor $C_{n+2}+sf$ of $F_{n+2}$, and the curve class $d_1C_{-n}+d_2f$ deforms to the curve class 
$d_1C_{-n-2}+(d_1+d_2)f$. So the intersection numbers $\Tg\cdot\beta$ and $D\cdot\beta$ appearing in the recursion \eqref{eqn:recurfm} do not change. This implies that the  recursion \eqref{eqn:recurfm} is compatible with the above deformation. So after a sequence of deformation, we have
\[\begin{aligned}
\text{GW}(\mathcal{O}_{F_n}(-C_n-sf))&\simeq \text{GW}(\mathcal{O}_{F_{n+2}}(-C_{n+2}-(s-1)f))\simeq\\
\cdots & \simeq  \text{GW}(\mathcal{O}_{F_{n+2s-2}}(-C_{n+2s-2}-f))
\end{aligned}\] 
by the deformation invariance of Gromov--Witten theory.
Thus, it is enough to consider only the following three types of pairs $(X,D)$:
\begin{equation}\label{eqn:3types}
(\PP^2,\lne),\quad (\PP^2,\conic),\quad (F_n,C_n+f),\,n\geq 0\,.
\end{equation}

\subsection{Quiver}
For each pair as above, there is a certain type of quivers associated to it by \cite{Bou21}. We give a brief review of the construction in Section \ref{subsec:GW/Kr}. The results can be summarized as follows.
For $(\PP^2,\lne)$, the corresponding quiver is 
\begin{center}
\includegraphics[width=\textwidth]{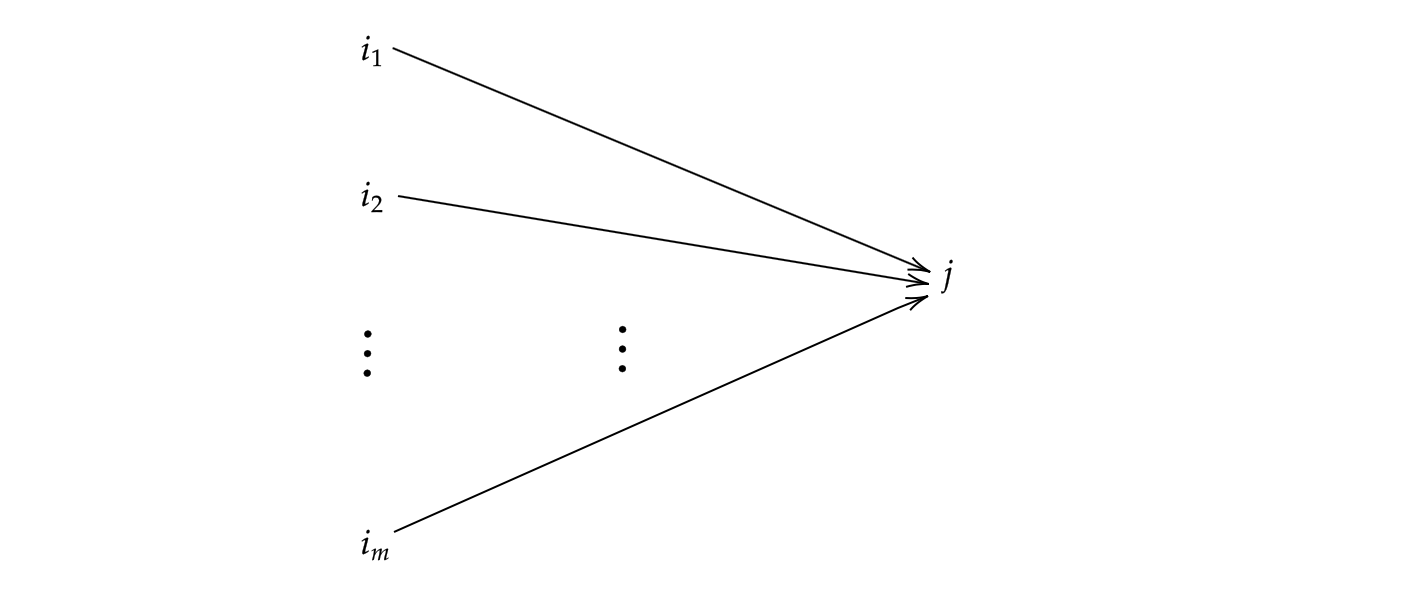}
\end{center}
For $(\PP^2,\conic)$, the corresponding quiver is 
\begin{center}
\includegraphics[width=\textwidth]{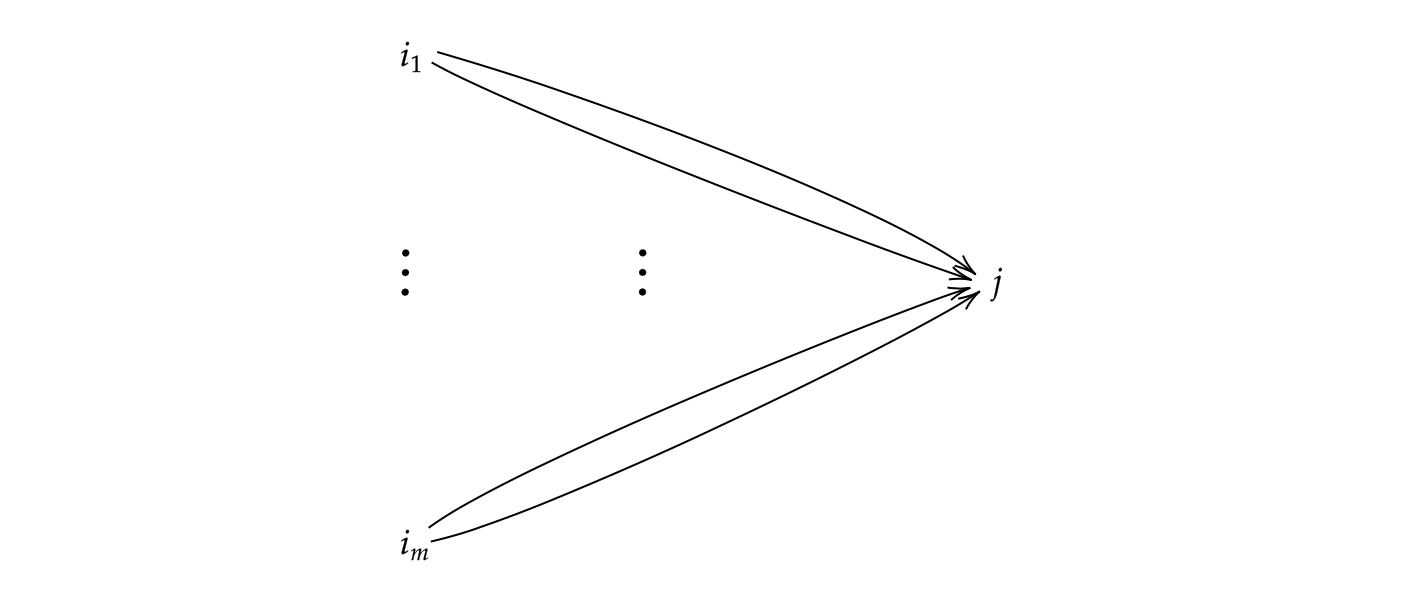}
\end{center}
As for $(F_n,C_n+f)$, the corresponding quiver is 
\begin{center}
\includegraphics[width=\textwidth]{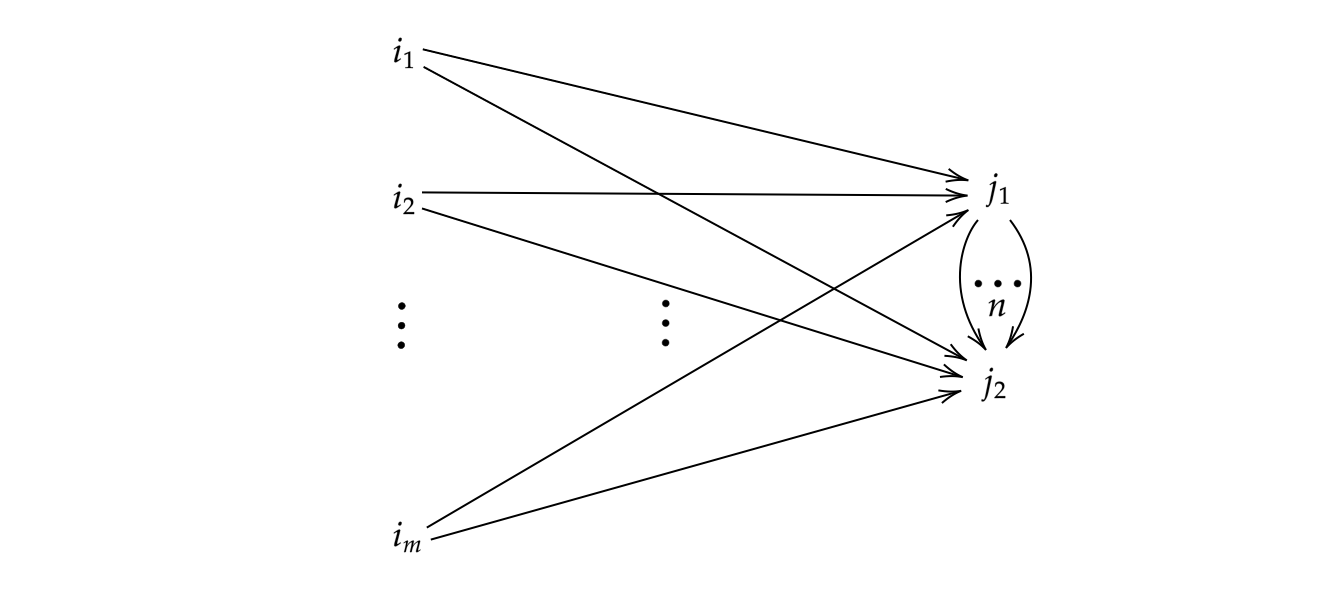}
\end{center}
Note that for all the three types of quivers, the number $m$ of vertices on the left-hand side is not fixed. In the third type of quivers, the number $n$ of arrows between vertices $j_1$ and $j_2$ is the same as the subscript of $F_n$. To get a quiver moduli, we need to specify the dimension vector. For those vertices on the left-hand side, we always put dimension one. For those vertices indexed by $j,\, j_1,\,j_2$, the dimensions can be arbitrary non-negative integers, we denote them as $d,\,d_1,\,d_2$ respectively.

We use $M_{m,d}^{L}$, $M_{m,d}^{C}$ and $M_{m,d_1,d_2}^{F_n}$ to denote the moduli spaces of $\theta$-semistable quiver representations associated to the first, second and third type of quivers with stability condition $\theta$ always given by \[\theta(\cdot)=\{\bard,\cdot\}\]
where $\{\cdot,\cdot\}$ is the antisymmetrized Euler form and $\bard$ is the dimension vector associated to the corresponding quiver (see Section \ref{subsec:quiverintr} for more details). 

Given a projective moduli space $Y$ of semistable quiver representations, the corresponding refined Donaldson-Thomas invariant $\Omega_Y(q)$ is defined as follows. If the stable locus of $Y$ is not empty, then
\begin{equation}\label{eqn:dt}
\Omega_Y(q)=(-q^{1/2})^{-\dim_{\C}Y} \sum_{i=0}^{2 \dim_\C Y} \dim\text{IH}^i(Y,\Q)(-q^{1/2})^i\end{equation}
i.e., $\Omega_Y(q)$ is the shifted Poincar\'e polynomial of the intersection cohomology of $Y$. Otherwise, if the stable locus of $Y$ is empty, $\Omega_{Y}(q)=0$. The equivalence with the definition of quiver Donaldson--Thomas invariants 
based on the motivic Hall algebra \cite{JS, KS, R10}
is shown in \cite{MR}.

\begin{thm}[\cite{Bou21}]\label{thm:GW/Kr}
For the following three types of pairs $(X,D)$:
\begin{equation*}
(\PP^2,\lne),\quad (\PP^2,\conic),\quad (F_n,C_n+f),\,n\geq 0 \,,
\end{equation*}
we have
$$\Omega_{M_{\beta}^{X/D}}(q)=F_{\beta}^{X/D}\frac{(-1)^{D\cdot\beta+1}}{(2\sin(h/2))^{\Tg\cdot\beta-1}}$$
if $\Tg\cdot\beta>0$, where $q=e^{\sqrt{-1}h}$,  and $M_{\beta}^{X/D}$ is determined by $(X,D)$ and $\beta$ as follows:
\begin{enumerate}
\item if $(X,D)=(\PP^2,\lne)$ and $\beta=d[l]$, then $M_{\beta}^{X/D}=M^{L}_{2d,d}$ where $[l]$ is the line class;
\item if $(X,D)=(\PP^2,\conic)$ and $\beta=d[l]$, then $M_{\beta}^{X/D}=M^{C}_{d,d}$;
\item if $(X,D)=(F_n,C_n+f)$ and $\beta=d_1C_{-n}+d_2f$, then $M_{\beta}^{X/D}=M^{F_n}_{m,d_1,d_2}$ where $m=\Tg\cdot\beta=(1-n)d_1+d_2$.
\end{enumerate}
\end{thm}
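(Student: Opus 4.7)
The plan is to combine a reduction to log Calabi--Yau surfaces via a blow-up with the higher-genus GW/quiver correspondence established in \cite{Bou20}. For each of the three pairs $(X,D)$, one first constructs an auxiliary log Calabi--Yau surface $(Y,E)$ by blowing up an appropriate set of points on $D$, so that the strict transform $\widetilde{D}$ together with the exceptional divisors forms an anticanonical cycle $E$ in $Y$. Since $D$ is smooth rational of virtual genus $0$, the numerics $\Tg\cdot\beta$ dictate how many points to blow up: $m=\Tg\cdot\beta$ points on $\lne$ in case (1), points on the conic in case (2), and points on the section $C_n$ in case (3). The $q$-refined canonical scattering diagram of the resulting $(Y,E)$ is the main computational bridge.

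First, I would re-express $F_\beta^{X/D}$ in terms of log Gromov--Witten invariants of $(Y,E)$ with maximal tangency along a single boundary component of $E$. The blow-up/degeneration formula, combined with the $(-1)^g\lambda_g$ insertion and the observation that each point insertion on $D$ can be traded for a blow-up at a generic point of $D$ (hence for passage through an exceptional divisor with multiplicity $1$), converts the relative maximal-contact invariants of $(X,D)$ into log invariants of $(Y,E)$ passing simply through the exceptional divisors. This is the setup of \cites{GrP,Bou20}: up to an explicit normalization, $F_\beta^{X/D}$ becomes the wall function on the outgoing ray of class $\beta$ in the $q$-refined canonical scattering diagram of $(Y,E)$.

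Next, I would invoke the main theorem of \cite{Bou20}, building on \cites{GrP,RSW,RW13,RW21}, which matches the $q$-refined canonical scattering diagram of a log Calabi--Yau surface with the wall-crossing structure in the derived category of an associated acyclic quiver. Via the Reineke--Weist GW/Kronecker correspondence, the wall function on a given ray equals the generating series of refined DT invariants of $\theta$-stable quiver representations, where $\theta=\{\bard,\cdot\}$ is the antisymmetrized Euler form. For each of the three pairs, the quiver produced by this identification is exactly the one drawn in Section \ref{subsec:GW/Kr}: the left vertices correspond to the blown-up points on $D$ (so their number equals $m=\Tg\cdot\beta$ in cases (1) and (3)), while the right vertices and the arrows between them encode the intersection pattern of $\widetilde{D}$ with the remaining components of $E$; in case (3) the $n$ arrows between $j_1$ and $j_2$ reflect the self-intersection numbers on $F_n$. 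The dimension vector $(1,\dots,1,d)$ in case (1) or $(1,\dots,1,d_1,d_2)$ in case (3) records the curve class $\beta$.

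The main obstacle is the higher-genus part of the correspondence: at genus zero this is the Gross--Pandharipande--Siebert/Reineke--Weist story \cites{GrP,RW21}, but the all-genus extension needs the $q$-refined scattering diagrams of \cite{Bou20} together with a careful tracking of normalizations. In particular, the prefactor $(-1)^{D\cdot\beta+1}/(2\sin(h/2))^{\Tg\cdot\beta-1}$ in the statement is forced by reconciling (i) the GW normalization involving $h^{2g-1+\Tg\cdot\beta}$ and the $(-1)^g\lambda_g$ insertion, (ii) the wall-function normalization $\log(1+\cdots)$ on the ray of class $\beta$ in the $q$-refined scattering diagram, and (iii) the shifted intersection-cohomology Poincar\'e polynomial normalization \eqref{eqn:dt} defining $\Omega_{M_\beta^{X/D}}(q)$. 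Once the quivers and dimension vectors are matched and these normalizations aligned, the identity in the theorem follows.
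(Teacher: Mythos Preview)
Your outline is broadly the strategy underlying \cite{Bou21}, which is precisely what the paper invokes. The paper's own argument in \S\ref{subsec:GW/Kr} is therefore much more economical: it quotes \cite[Theorem~1.2]{Bou21} as a black box, observes that $\beta\cdot D_2=\Tg\cdot\beta$ in the set-up where $D_1\cup D_2=-K_Y$, and then checks that the three pairs are instances of that framework by pointing to the worked examples \cite[\S6.1, \S6.2, \S6.10]{Bou21}. You are in effect sketching the proof of \cite{Bou21} rather than applying it; note also that the higher-genus GW/quiver identification at the anti-attractor stability is the content of \cite{Bou21}, while \cite{Bou20} supplies the $q$-refined scattering technology it rests on.

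Two points where your sketch departs from the paper. First, in the set-up of \cite{Bou21} one takes $(Y,D_1)=(X,D)$ together with a second smooth divisor $D_2$ so that $D_1\cup D_2$ is anticanonical; the $m=\Tg\cdot\beta=\beta\cdot D_2$ left-hand vertices are tied to $D_2$ and to the point insertions, not to blowing up points on $D$ itself. Second, and more substantively, for $(F_n,C_n+f)$ the paper does not apply \cite{Bou21} directly to this pair. It first passes via the deformation $(F_n,C_n+f)\sim(F_{n-2},C_{n-2}+2f)$, which is the pair actually treated in \cite[\S6.10]{Bou21}, and then notes that the quiver produced there differs from the one in the statement by a mutation at the bottom-right vertex, corresponding to a change of toric model. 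Your sketch omits both the deformation and the mutation step, so as written it does not produce the quiver with $n$ arrows between $j_1$ and $j_2$ and dimension vector $(1,\dots,1,d_1,d_2)$ appearing in the statement.
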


Combining Theorems \ref{thm:loc/rel} and \ref{thm:GW/Kr}, we can translate the recursion from local side to quiver side. 
We explain in Section \ref{sec:proof} how to derive the recursion on the quiver side from the following geometric properties of the quiver moduli spaces.

\subsubsection{Duality}
Using reflection functors, we obtain the following isomorphisms:
\begin{thm}\label{thm:dual}
\begin{enumerate}
\item $M_{m,d}^L\simeq M_{m,m-d}^L$ if $m$, $d$ are coprime and $m-d>0$;
\item $M_{m,d}^C\simeq M_{m,2m-d}^C$ if $m$, $d$ are coprime and $2m-d>0$;
\item $M_{m,d_1,d_2}^{F_0}\simeq M_{m,m-d_1,m-d_2}^{F_0}$ if $m>\max\{d_1,d_2\}$ and $m$, $d_1+d_2$ are coprime;
\item $M_{(1-n)d_1+d_2+1,d_1,d_2}^{F_n}\simeq M_{(1-n)d_1+d_2+1,d_1+1,d_2+n+1}^{F_n}$ if $n>0$ and $(1-n)d_1+d_2\geq 0$.
\end{enumerate}
\end{thm}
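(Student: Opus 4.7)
The plan is to use Bernstein--Gelfand--Ponomarev reflection functors, in the spirit of the analogous arguments in Reineke--Weist \cite{RW21}. Recall that for a vertex $v$ which is a sink (respectively source) of a quiver $Q$, the reflection functor $\Sigma_v^{\pm}$ induces an equivalence between the subcategory of representations of $Q$ without a simple summand at $v$ and the corresponding subcategory for the reflected quiver $\sigma_v Q$; on dimension vectors it acts by $s_v(\bd)_v=(\sum_w n_{vw} d_w)-d_v$, fixing all other coordinates. Combined with King's characterization of $\theta$-stability, this descends to an isomorphism of moduli of stable representations $M^\theta(Q,\bd)\simeq M^{\sigma_v\theta}(\sigma_v Q, s_v\bd)$, provided that the coprimality or stability hypotheses rule out the excluded simple summand.

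Parts (1), (2), and (3) are realized by a single reflection at the distinguished right vertex $j$ (independently at $j_1$ and $j_2$ in the $F_0$ case, where these two vertices are not joined by any arrow). For the $L$-quiver each of the $m$ left vertices contributes a single arrow to $j$, giving $d\mapsto m\cdot 1-d=m-d$; for the $C$-quiver each left vertex contributes two arrows, giving $d\mapsto 2m-d$; and for $F_0$, independence of $j_1$ and $j_2$ yields $(d_1,d_2)\mapsto(m-d_1,m-d_2)$. In each case the reflected quiver agrees with the original one after reversing the orientation of the arrows incident to $j$ (respectively $j_i$), which can be absorbed either by relabeling or by an additional sequence of reflections at the left vertices, so that the moduli space lives again over the same abstract quiver. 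The coprimality hypotheses guarantee that stable equals semistable on both sides and that the reflection functor is a genuine equivalence on the $\theta$-semistable locus.

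For part (4), since $j_1$ and $j_2$ are now joined by $n$ arrows, no single reflection returns to a dimension vector of the same shape, and the transformation $(d_1,d_2)\mapsto(d_1+1,d_2+n+1)$ must be realized as a composition of reflections. The natural candidate is a sequence involving $\sigma_{j_2}\circ\sigma_{j_1}$ followed by a reflection at one of the $m$ left vertices, which together perform a translation in the Weyl group of the Kronecker-type subquiver on $\{j_1,j_2\}$ while transferring a one-dimensional summand between the ``$j$-side'' and the ``left-side'' of the quiver. The extra $+1$ in the identity $m=(1-n)d_1+d_2+1$ (as compared with $\Tg\cdot\beta$) is exactly what provides the ``spare'' left vertex on which such an absorption can take place, and it is also what makes the composite transformation preserve the dimension vector $(1,\dots,1)$ at the left vertices.

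The main obstacle is to carry out part (4) in detail: choosing the correct composition of reflections, verifying that every intermediate dimension vector is a positive root so that all reflection functors used are genuine category equivalences, and checking that each reflection transforms the King stability $\theta(\cdot)=\{\bard,\cdot\}$ on the source to the analogous stability on the target. Once this bookkeeping is carried out, the isomorphism of moduli spaces follows directly from the general functoriality of reflection functors together with King's GIT construction.
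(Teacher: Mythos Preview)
Your overall strategy (BGP reflection functors) is exactly the paper's, but there is a genuine gap: you are missing the dual functor. After reflecting at a sink $j$, the quiver $\sigma_j Q$ has all arrows at $j$ reversed, and for the bipartite $L$- and $C$-quivers this is literally $Q^{op}$. Neither of your proposed fixes works: no vertex relabeling can reverse arrow orientations, and reflecting at a left vertex $i_k$ of dimension $1$ sends that dimension to $(m-d)-1$ (or $(2m-d)-1$, etc.), not back to $1$. What the paper does instead is compose with the vector-space duality $D:\Rep_\C Q\to\Rep_\C Q^{op}$, which sends $\theta$-stables to $(-\theta)$-stables. The isomorphism in each of (1)--(3) is induced by $D\circ R_j^+$ (respectively $D\circ R_{j_1}^+\circ R_{j_2}^+$ for $F_0$), and the reinterpretation of the anti-attractor stability on the target is then a direct check.

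For part (4) your plan diverges further and, as you admit, is left incomplete. A reflection at a left vertex again destroys the $(1,\dots,1)$ pattern, so no ``spare left vertex'' can absorb anything. The paper's argument is in fact uniform with cases (1)--(3): one applies $D\circ R_{j_1}^+\circ R_{j_2}^+$. Because of the $n$ arrows between $j_1$ and $j_2$, the resulting quiver differs from the original only by reversing those $n$ arrows, which is undone by swapping the labels $j_1\leftrightarrow j_2$. With $m=(1-n)d_1+d_2+1$ the new dimensions are $nd_1+m-d_2=d_1+1$ and $(n^2-1)d_1-nd_2+(n+1)m=d_2+n+1$, as required. So once you insert the dual functor, no separate Weyl-group or ``translation'' argument is needed.
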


\subsubsection{Other geometric properties}
We also need other geometric properties which can be summarized as follows. Let $M_{m,d}^{L,\text{fr}}$, $M_{m,d}^{C,\text{fr}}$, $M_{m,d_1,d_2}^{F_n,\text{fr}}$ be the framed quiver moduli associated to $M_{m,d}^{L}$, $M_{m,d}^{C}$, $M_{m,d_1,d_2}^{F_n}$ respectively (see Section 3.4 for more details). 
\begin{thm}\label{thm:f/uf}We have the following isomorphisms\
\begin{enumerate}
\item $M_{m,d}^{L,fr}\simeq M_{m+1,d}^L$ if $m$ divides $d$;
\item $M_{m,d}^{C,fr}\simeq M_{m+1,d}^{C}$ if $m$ divides $d$;
\item $M_{m,d_1,d_2}^{F_0,fr}\simeq M_{m+1,d_1,d_2}^{F_0}$ if $m$ divides $d_1+d_2$;
\item $M_{(1-n)d_1+d_2,d_1,d_2}^{F_n,fr}\simeq M_{(1-n)d_1+d_2+1,d_1,d_2}^{F_n}$ if $n>0$ and $(1-n)d_1+d_2\geq 0$.
\end{enumerate}
\end{thm}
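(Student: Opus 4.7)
The plan is to deduce Theorem \ref{thm:f/uf} from the standard dictionary that identifies framed quiver moduli with unframed moduli on an enlarged quiver obtained by adjoining one extra vertex of dimension $1$. Inspecting the three quivers displayed in Section \ref{subsec:definv}, attaching a one-dimensional framing at the appropriate place produces exactly the underlying quiver of $M^{L}_{m+1,d}$ (respectively $M^{C}_{m+1,d}$, $M^{F_n}_{m+1,d_1,d_2}$): the framing vertex plays the role of one additional ``left-hand'' vertex. This makes the combinatorial identification of the underlying quiver data immediate, so the real content of the theorem is that the two stability conditions match.

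First I would formulate the dictionary precisely. On the enlarged quiver, take the $\theta$-stability given by $\theta(\cdot) = \{\bard, \cdot\}$ with $\bard$ the enlarged dimension vector. On the framed side, the framed stability condition of Section 3.4 restricts $\theta$-destabilization to subrepresentations compatible with the framing data. The assignment sending a framed representation $(V, f)$ to the representation $\hat V$ of the enlarged quiver whose extra vertex is one-dimensional and whose outgoing arrows into $V$ encode $f$ identifies framed (semi)stable objects with $\theta$-(semi)stable objects on the enlarged side. This is a version of the Crawley-Boevey/Reineke framing construction adapted to the present quivers.

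Second I would verify that the hypotheses ($m \mid d$ in (1) and (2); $m \mid d_1 + d_2$ in (3); $n > 0$ together with $(1-n)d_1 + d_2 \geq 0$ in (4)) force $\theta$-stability to coincide with $\theta$-semistability on the enlarged side and guarantee that both moduli are fine. A potential destabilizing subrepresentation corresponds to a sub-dimension-vector of slope $0$ for $\theta$, and the numerical hypotheses are tailored precisely so that the only such sub-vectors come from genuine framed subrepresentations of the original quiver. Combined with the dictionary of the previous step, this upgrades the bijection of closed points to an isomorphism of projective GIT quotients via the universal property of fine moduli.

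The main obstacle is the slope computation with $\theta(\cdot) = \{\bard,\cdot\}$ on each of the three enlarged quivers, in particular ruling out strictly semistable loci in cases where the underlying unframed dimension vector may be non-coprime. I expect to carry this out case by case by a short numerical argument using the explicit shape of each quiver and the constraints linking $m$, $d$, $d_1$, $d_2$, and $n$; Case (4) will additionally need the $n$ arrows between $j_1$ and $j_2$ to be handled, but they do not touch the newly-adjoined vertex and hence do not interfere with the stability check. Once stability equals semistability on both sides, the isomorphism of projective moduli schemes follows formally.
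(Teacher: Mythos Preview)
There is a genuine gap in your plan: you have conflated two different stability conditions on the enlarged quiver. The Engel--Reineke dictionary cited in Section~3.4 identifies $M^{L,\mathrm{fr}}_{m,d}$ with the moduli of $\hat{\theta}$-semistable representations of the enlarged quiver, where $\hat{\theta}$ is obtained by extending the normalized anti-attractor stability $\{\bard,\cdot\}$ of the \emph{original} $m$-vertex quiver by the entry~$1$ at the new vertex~$i_0$. On the other hand, $M^L_{m+1,d}$ is by definition the moduli of $\theta'$-semistable representations for $\theta' = \{\bard',\cdot\}$, the anti-attractor stability of the \emph{enlarged} dimension vector~$\bard'$. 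These are a~priori different linear functionals, and your sentence ``identifies framed (semi)stable objects with $\theta$-(semi)stable objects on the enlarged side'' with $\theta=\{\bard',\cdot\}$ is precisely the assertion to be proved, not a consequence of the framing construction.

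The paper's proof consists entirely of the comparison $\hat{\theta}\sim\theta'$: in cases (1)--(3) one writes $\hat{\theta}$ explicitly as a positive scalar multiple of $\theta'$ plus a multiple of the total-dimension functional $\dim$, so the induced slope functions agree; in case (4) the two are not related by such a formula and a direct argument is needed, showing that both (semi)stability conditions reduce to the integrality constraint $\hat{\theta}(\mathbf{dim}\,U)\le 0$ for proper subrepresentations. Your proposed second step, checking that stability coincides with semistability and that the moduli are fine, does not address this comparison and is not how the divisibility hypotheses are used; even when both moduli are smooth and fine, without the explicit matching of $\hat{\theta}$ and $\theta'$ there is no reason for them to be isomorphic.
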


\begin{thm} \label{thm:smres}
Assume that $m>0$. There exist small resolutions:\
\begin{enumerate}
\item $M_{m-1,d}^{L,fr}\rightarrow M_{m,d}^{L}$ if $m$ divides $d$ and $m>d$;
\item $M_{m-1,d}^{C,fr}\rightarrow M_{m,d}^{C}$ if $m$ divides $d$;
\item $M_{m-1,d_1,d_2}^{F_0,fr}\rightarrow M_{m,d_1,d_2}^{F_0}$ if $m$ divides $d_1+d_2$ and $d_1,d_2>0$;
\item $M_{(1-n)d_1+d_2-1,d_1,d_2}^{F_n,fr}\rightarrow M_{(1-n)d_1+d_2,d_1,d_2}^{F_n}$ if $n>0$, $d_1>0$ and $d_2>\max\{d_1-1,(n-1)d_1\}$.
\end{enumerate}
\end{thm}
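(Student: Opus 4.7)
The plan is to construct each of the four morphisms uniformly by forgetting the framing datum, and then verify smoothness of the source, birationality, and the codimension inequality that characterizes smallness. In all four cases the framed moduli $M_{\ast}^{\ast,\text{fr}}$ parameterizes a representation of the smaller quiver together with an extra ``framing'' vector landing in the central block of dimension $d$ (respectively $d_1$, $d_2$ in the Hirzebruch cases). Tautologically, such a framing vector is precisely the datum attached to an additional left vertex of dimension one, and this observation yields natural morphisms
\[
\pi \colon M_{m-1,d}^{L,\text{fr}} \to M_{m,d}^{L}, \qquad \pi \colon M_{m-1,d}^{C,\text{fr}} \to M_{m,d}^{C},
\]
and analogously in cases (3) and (4). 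The divisibility and positivity hypotheses of Theorem~\ref{thm:smres} are exactly what ensures that the stability condition $\theta=\{\bard,\cdot\}$ on the target pulls back to the framed stability condition on the source; otherwise $\pi$ might not even be defined on the full semistable locus.

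Smoothness of $M_{\ast}^{\ast,\text{fr}}$ is a standard fact about framed moduli of acyclic quivers: the framing rigidifies the $\on{GL}$-action to be free on the semistable locus, so the GIT quotient is smooth of the expected dimension. Birationality of $\pi$ follows by identifying both sides with the $\theta$-stable locus $M_{\ast}^{\ast,\text{st}} \subset M_{\ast}^{\ast}$ of simple representations of the enlarged unframed quiver: such representations have trivial stabilizer, the extra framing vertex can be recovered uniquely up to the global scalar, and $\pi$ is therefore an isomorphism over a dense open subset. The numerical hypotheses of Theorem~\ref{thm:smres}, together with the existence of stable representations already exploited in Theorem~\ref{thm:dual}, ensure that this stable locus is non-empty in each of the four cases.

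The heart of the proof, and the main obstacle, is the smallness estimate. Following the approach of Reineke--Weist in \cite{RW21}, I would stratify the target by Luna strata $S_\tau$ indexed by polystable decomposition types $\tau=(\bard_1,\ldots,\bard_k)$ of the dimension vector into $\theta$-stable summands. Over a stratum $S_\tau$, the fiber of $\pi$ is controlled by an $\Ext^1$ computation on the framing arrow and has an explicit dimension $f(\tau)$ that can be written in terms of the pairing of $\tau$ with the framing vertex. Smallness then reduces to the numerical inequality
\[
2 f(\tau) \;<\; \on{codim}_{M_{\ast}^{\ast}} S_\tau
\]
for every non-generic type $\tau$, which I would prove by induction on the length of $\tau$ using concavity of the Euler form and the hypotheses of the theorem.

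The four cases proceed in parallel: cases (1), (2) differ only by the rank datum of the central vertex, and case (3) by the bookkeeping over the two colors $(d_1,d_2)$. The delicate point is case (4): the $n$ parallel arrows between $j_1$ and $j_2$ skew the Euler form, and the inequality $d_2 > \max\{d_1-1,(n-1)d_1\}$ is precisely what pushes the dangerous ``diagonal'' decompositions (in which many JH summands share a common slope) into strata of sufficient codimension. I expect the verification of this last inequality across all strata to require the most careful numerical argument and to be the main obstacle to carrying the plan through.
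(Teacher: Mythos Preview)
Your plan is plausible but takes a different route from the paper. You propose to verify smallness directly by Luna-stratifying the target, computing fiber dimensions over each stratum, and checking the inequality $2f(\tau) < \operatorname{codim} S_\tau$ by hand in each of the four cases. The paper instead invokes a general black-box result of Reineke \cite[Theorem~4.3]{R17}: if (I) the $\theta$-stable locus is non-empty, (II) $\theta'$ is a generic deformation of $\theta$, and (III) the Euler form is symmetric on $\Ker(\theta)$, then the natural map $M_{\bard}^{\theta'\text{-}sst}(Q)\to M_{\bard}^{\theta\text{-}sst}(Q)$ is a small resolution. The framed moduli $M_{\ast}^{\ast,\text{fr}}$ is identified with $M_{\bard}^{\theta'\text{-}sst}$ for a suitable $\theta'$, and the whole proof reduces to checking (I)--(III). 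Conditions (I) and (III) are immediate; for (II) the paper writes $\theta' = C\theta + \eta$ explicitly in each case and applies the concrete deformation criterion \cite[Theorem~5.1]{R17}, which is pure linear algebra on the stability parameters.

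What this buys: your stratification approach would in effect reprove the relevant instance of \cite[Theorem~4.3]{R17} four times over, and you yourself flag case~(4) as the main obstacle. The paper sidesteps this entirely---the delicate inequality you anticipate is absorbed into Reineke's general theorem, and the hypotheses $d_1>0$, $d_2>\max\{d_1-1,(n-1)d_1\}$ enter only to guarantee (I), not to control fiber dimensions. Your approach would be more self-contained but substantially longer; the paper's is shorter and cleaner at the cost of depending on \cite{R17}. One minor correction: the map is not really ``forgetting the framing'' but rather reinterpreting the framing vertex as an ordinary vertex and then passing from the finer stability $\theta'$ to the coarser $\theta$---the content of the map is the variation of GIT, not a forgetful map.
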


\begin{rmk}
All the geometric properties of case (1) in Theorems \ref{thm:dual}, \ref{thm:f/uf} and \ref{thm:smres} were first proven by Reineke and Weist in \cite{RW21}. We show in Section \ref{sec:GW/Kr} that cases (2), (3), (4) in Theorems \ref{thm:dual}, \ref{thm:f/uf} and \ref{thm:smres} can be deduced via a slight generalization of their
argument in \cite{RW21}.
\end{rmk}

\subsection{BPS invariants}
The BPS invariants $n_{g,\beta}^{\mathcal{O}_X(-D)}$ for $\mathcal{O}_X(-D)$ are defined via 
\begin{equation}\label{eqn:defbps}
F_{\beta}^{\mathcal{O}_X(-D)}=\sum_g n_{g,\beta}^{\mathcal{O}_X(-D)}(2\sin(h/2))^{2g-2+\Tg\cdot\beta}
\end{equation}
if $\Tg\cdot\beta>0$. Combining the deformation equivalence and Theorems \ref{thm:loc/rel}, \ref{thm:GW/Kr} and \ref{thm:smres}, we can deduce that 
\begin{thm}\label{thm:bps}
Under the same assumptions as in Theorem \ref{thm:main} for the pair $(X,D)$, we have
\begin{equation}\label{eqn:bps}
\sum_{g} n_{g,\beta}^{\mathcal{O}_X(-D)}(2\sin(h/2))^{2g}=(-1)^{D\cdot\beta-1}\Omega_{M_{\beta}^{\cO_X(-D)}}(q)
\end{equation}
if $\Tg\cdot\beta>0$, where $q=e^{\sqrt{-1}h}$, and $M^{\cO_X(-D)}_{\beta}$ is determined by $(X,D)$ and $\beta$ as follows:
\begin{enumerate}
\item if $(X,D)=(\PP^2,\lne)$ and $\beta=d[l]$, then $M_{\beta}^{\cO_X(-D)}=M^{L}_{2d-1,d}$;
\item if $(X,D)=(\PP^2,\conic)$ and $\beta=d[l]$, then $M_{\beta}^{\cO_X(-D)}=M^{C}_{d-1,d}$;
\item if $(X,D)=(F_n,C_n+(s+1)f)$ and $\beta=d_1C_{-n}+d_2f$, then $M_{\beta}^{\cO_X(-D)}=M^{F_{n+2s}}_{m-1,d_1,d_2+sd_1}$ where $m=\Tg\cdot\beta=(1-n-s)d_1+d_2$.
\end{enumerate}
\end{thm}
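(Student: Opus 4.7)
The plan is to chain the BPS definition \eqref{eqn:defbps} with Theorems \ref{thm:loc/rel} and \ref{thm:GW/Kr} to reduce \eqref{eqn:bps} to a purely quiver-theoretic identity, and then to extract that identity from the small resolutions provided by Theorem \ref{thm:smres} together with the natural $\mathbb{P}^{D\cdot\beta-1}$-bundle structure of the framed quiver moduli over the unframed ones.

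First, \eqref{eqn:defbps} gives
$$\sum_g n_{g,\beta}^{\cO_X(-D)}(2\sin(h/2))^{2g} = F_\beta^{\cO_X(-D)}\,(2\sin(h/2))^{2-\Tg\cdot\beta}.$$
Into the right-hand side I substitute $F_\beta^{\cO_X(-D)}$ from Theorem \ref{thm:loc/rel} and then $F_\beta^{X/D}$ from Theorem \ref{thm:GW/Kr}, after first using the deformation equivalence of Section \ref{subsec:definv} to replace $(F_n,C_n+(s+1)f)$ by $(F_{n+2s},C_{n+2s}+f)$ in the Hirzebruch case (this deformation preserves both $\Tg\cdot\beta$ and $D\cdot\beta$). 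The sign factors $(-1)^{D\cdot\beta-1}$ and $(-1)^{D\cdot\beta+1}$ cancel, the powers of $2\sin(h/2)$ collapse, and one obtains
$$\sum_g n_{g,\beta}^{\cO_X(-D)}(2\sin(h/2))^{2g} = \frac{2\sin(h/2)}{2\sin((D\cdot\beta)h/2)}\,\Omega_{M_\beta^{X/D}}(q).$$
Since \eqref{eqn:dt} applied to $Y=\mathbb{P}^{D\cdot\beta-1}$ evaluates to $\Omega_{\mathbb{P}^{D\cdot\beta-1}}(q) = (-1)^{D\cdot\beta-1}\,2\sin((D\cdot\beta)h/2)/(2\sin(h/2))$, the desired formula \eqref{eqn:bps} is equivalent to the quiver-theoretic equality
$$\Omega_{M_\beta^{X/D}}(q) \;=\; \Omega_{\mathbb{P}^{D\cdot\beta-1}}(q)\cdot \Omega_{M_\beta^{\cO_X(-D)}}(q).$$

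To prove this equality I would use two ingredients. First, Theorem \ref{thm:smres} produces a small resolution $\pi: M^{\mathrm{fr}}\to M_\beta^{X/D}$, where $M^{\mathrm{fr}}$ is the framed analogue of $M_\beta^{\cO_X(-D)}$ obtained by adjoining one extra left-hand vertex of dimension $1$; since small resolutions preserve intersection cohomology and total dimension, $\Omega_{M^{\mathrm{fr}}}(q)=\Omega_{M_\beta^{X/D}}(q)$. Second, $M^{\mathrm{fr}}\to M_\beta^{\cO_X(-D)}$ is a Zariski-locally trivial $\mathbb{P}^{D\cdot\beta-1}$-bundle: the framing datum is a nonzero vector (up to rescaling) in the direct sum of the target spaces of the arrows emanating from the new vertex, and a direct inspection of the three quivers in Section \ref{subsec:GW/Kr} shows that this total framing dimension equals $d=D\cdot\beta$ for $(\PP^2,\lne)$, $2d=D\cdot\beta$ for $(\PP^2,\conic)$, and $d_1+(d_2+sd_1)=(s+1)d_1+d_2=D\cdot\beta$ for the deformed Hirzebruch case. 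Multiplicativity of $\Omega$-polynomials along a Zariski-locally trivial projective bundle then yields the factorization above.

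The main obstacle is verifying that the hypotheses of Theorem \ref{thm:smres} are actually met in each case. In cases (1)--(3) the relevant divisibility conditions become automatic for the specific dimension vectors $(m,d)=(2d,d)$, $(d,d)$, $(d_1+d_2,d_1,d_2)$ dictated by Theorem \ref{thm:bps}. In case (4), after the deformation, substituting $n'=n+2s$ and $d_2'=d_2+sd_1$ transforms the inequality $d_2'>(n'-1)d_1$ of Theorem \ref{thm:smres}(4) into $d_2>(n+s-1)d_1$, which is precisely the standing assumption $\Tg\cdot\beta>0$. Once this case-by-case verification is in place, the two-step geometric argument above delivers Theorem \ref{thm:bps}.
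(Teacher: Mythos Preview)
Your approach is essentially identical to the paper's: combine the BPS definition with Theorems \ref{thm:loc/rel} and \ref{thm:GW/Kr} (using the deformation of Section \ref{subsec:definv} for the Hirzebruch case) to reduce to the factorization $\Omega_{M_\beta^{X/D}}=P_{\PP^{D\cdot\beta-1}}\cdot\Omega_{M_\beta^{\cO_X(-D)}}$, and then establish that factorization via the small resolution of Theorem \ref{thm:smres} together with the $\PP^{D\cdot\beta-1}$-bundle structure of the framed moduli (the paper phrases the last step as Leray--Hirsch, which is the same content as your multiplicativity claim).

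There is one small gap. You verify some of the hypotheses of Theorem \ref{thm:smres}, but not all: for instance in case (3) you need $d_1,d_2>0$, and in case (4) you need $d_1>0$ and $d_2'>d_1-1$ in addition to the inequality you checked. These can genuinely fail (e.g.\ $d_1=0$ for $F_0$, or $d_1>d_2$ for $F_1$), so the small-resolution argument is not available there. The paper closes this gap by observing that whenever the stable locus of $M_\beta^{X/D}$ is empty, the stable locus of $M_\beta^{\cO_X(-D)}$ is empty as well, so both $\Omega$'s vanish by definition and the factorization holds trivially. You should add this degenerate case to complete the argument.
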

By computing the dimension of $M_{\beta}^{\cO_X(-D)}$, we are able to determine the BPS Castelnuovo number\footnote{Our definition of the BPS Castelnuovo number is different from that of Doan and Walpuski in \cite{DW19} where they defined the BPS Castelnuovo number to be $\inf\{g\,|\,n_{g,\beta}^{\mathcal{O}_X(-D)}= 0\}$.} defined as
\[g_{\beta}^{\mathcal{O}_X(-D)}\coloneqq \sup\{g\,|\,n_{g,\beta}^{\mathcal{O}_X(-D)}\neq 0\}.\]
\begin{cor}\label{cor:bpsca}
Under the same assumptions as in Theorem \ref{thm:main} for the pair $(X,D)$ and assuming $\Tg\cdot\beta>0$, $M_{\beta}^{\cO_X(-D)}\neq \emptyset$, we have
\begin{enumerate}
\item $g_{\beta}^{\mathcal{O}_X(-D)}=\frac{(K_X+\beta)\cdot\beta}{2}+1$;
\item $n_{g,\beta}^{\mathcal{O}_X(-D)}=(-1)^{g+D\cdot\beta-1}$, if $g=\frac{(K_X+\beta)\cdot\beta}{2}+1\geq 0$.
\end{enumerate}
\end{cor}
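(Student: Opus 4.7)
The plan is to extract the top-degree coefficient in a suitable variable on both sides of the identity \eqref{eqn:bps} given by Theorem \ref{thm:bps}, and to match it against a direct Euler-form computation of $\dim_\C M_\beta^{\cO_X(-D)}$ in each of the three families.

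First I would rewrite \eqref{eqn:bps} as a Laurent-polynomial identity in $y \coloneqq -q^{1/2}$. Since $q = e^{\sqrt{-1}h}$, one has $2\sin(h/2) = -\sqrt{-1}(q^{1/2}-q^{-1/2})$ and hence $(2\sin(h/2))^{2g} = (-1)^g(y-y^{-1})^{2g}$, while the definition \eqref{eqn:dt} gives
$$\Omega_{M_\beta^{\cO_X(-D)}}(q) = \sum_i \dim\mathrm{IH}^i(M_\beta^{\cO_X(-D)},\Q)\, y^{i - \dim_\C M_\beta^{\cO_X(-D)}}.$$
In each of the three cases in Theorem \ref{thm:bps} the dimension vector is primitive (its entries have gcd $1$), so $\theta$-semistability coincides with $\theta$-stability; in particular, $M_\beta^{\cO_X(-D)} \neq \emptyset$ already forces the stable locus to be non-empty and $M_\beta^{\cO_X(-D)}$ to be an irreducible smooth projective variety. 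The right-hand side of \eqref{eqn:bps} is then a Laurent polynomial in $y$ of top $y$-degree $\dim_\C M_\beta^{\cO_X(-D)}$ with leading coefficient $(-1)^{D\cdot\beta-1}$, while the left-hand side has top $y$-degree $2g_\beta^{\cO_X(-D)}$ with leading coefficient $(-1)^{g_\beta^{\cO_X(-D)}}\, n_{g_\beta^{\cO_X(-D)},\beta}^{\cO_X(-D)}$. Matching these simultaneously yields
$$2 g_\beta^{\cO_X(-D)} = \dim_\C M_\beta^{\cO_X(-D)}\quad\text{and}\quad n_{g_\beta^{\cO_X(-D)},\beta}^{\cO_X(-D)} = (-1)^{g_\beta^{\cO_X(-D)} + D\cdot\beta - 1},$$
which is exactly statement (2), and reduces (1) to the purely combinatorial identity $\dim_\C M_\beta^{\cO_X(-D)} = (K_X+\beta)\cdot\beta + 2$.

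For this remaining dimension identity I would use the standard formula $\dim_\C M_\beta^{\cO_X(-D)} = 1 - \langle\bard,\bard\rangle$, where $\langle\cdot,\cdot\rangle$ is the Euler form of the quiver described in Section \ref{subsec:GW/Kr} and $\bard$ is the dimension vector listed in Theorem \ref{thm:bps} (all left-hand vertices of dimension $1$). Plugging in each of the three families produces a short algebraic identity; in the $(F_n, C_n + (s+1)f)$ family it is cleanest first to reduce to the normal form $s=0$ via the deformation of Section \ref{subsec:definv}, since that deformation preserves both $K_X\cdot\beta$ and $\beta^2$ and sends the quiver to $M^{F_{n+2s}}_{m-1,d_1,d_2+sd_1}$, as is consistent with Theorem \ref{thm:bps}(3).

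The main obstacle is really just bookkeeping: correctly pairing the sign $(-1)^{g_\beta^{\cO_X(-D)}}$ from the expansion of $(2\sin(h/2))^{2g}$ with the sign $(-1)^{D\cdot\beta-1}$ from Theorem \ref{thm:bps}, and carrying out the three Euler-form computations (the $F_n$ case being the most involved as it has three integer parameters). No further input beyond Theorem \ref{thm:bps}, the definition \eqref{eqn:dt}, and the basic dimension formula for stable quiver moduli is needed.
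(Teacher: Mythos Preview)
Your proposal is correct and follows essentially the same route as the paper: both arguments invoke Theorem \ref{thm:bps}, use primitivity of the dimension vector to conclude that $M_\beta^{\cO_X(-D)}$ is smooth and irreducible (hence connected, so the top cohomology is one-dimensional), and then read off the top-degree term of $\Omega_{M_\beta^{\cO_X(-D)}}$ via the dimension formula $1-\langle\bard,\bard\rangle$. The only difference is that you spell out the case-by-case Euler-form check matching $1-\langle\bard,\bard\rangle$ with $(K_X+\beta)\cdot\beta+2$, which the paper leaves implicit.
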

Note that case (1) of Corollary \ref{cor:bpsca} matches with the genus-degree formula, and case (2) of Corollary \ref{cor:bpsca} actually follows from the geometric fact that the moduli space $M_{\beta}^{\cO_X(-D)}$ is connected.

\subsection{Numerical results}
Theorem \ref{thm:main}  gives an effective way to compute $F_{\beta}^{\cO_X(-D)}$ once we know those $F_{\beta}^{\cO_X(-D)}$ such that $\Tg\cdot\beta<3$. We show in Section \ref{sec:numdata} that 
for the following five pairs:
\[(\PP^2,\lne),\, (\PP^2,\conic),\,(F_n,C_n+f),\, n=0,1,2\,,\]
those initial $F_{\beta}^{\cO_X(-D)}$ can be explicitly determined from the quiver side. We also give some numerical results for the above five pairs in Section \ref{sec:numdata}. As for the pairs 
\[(F_n,C_n+f),\,n>2,\]
the initial $F_{\beta}^{\cO_X(-D)}$ are related to the quiver DT-invariants of $n$-Kronecker quivers with $n>2$,  for which no explicit closed formula is currently known (see Section \ref{sec:numdata} for more details).
\subsection{Organization of the paper}
This paper is organized as follows. In Section \ref{sec:loc/rel}, we give a brief introduction of local/relative correspondence and prove Theorem \ref{thm:loc/rel}. In Section 
\ref{sec:GW/Kr}, after a brief review of GW/quiver correspondence, we prove Theorems \ref{thm:dual}, \ref{thm:f/uf} and \ref{thm:smres}. In Section \ref{section:bps}, combining local/relative correspondence with GW/quiver correspondence, we prove Theorem \ref{thm:bps} and Corollary \ref{cor:bpsca} on the BPS invariants. In Section \ref{sec:proof}, we prove our main theorem, i.e., Theorem \ref{thm:main} on the all-genus WDVV recursion. In Section \ref{sec:numdata}, we give some numerical results based on the all-genus WDVV recursion. In Appendix \ref{sec:ex}, we give a counterexample which shows that the ample condition in Theorem \ref{thm:main} can not be relaxed to be nef. In Appendix \ref{sec:recg0}, we give another proof of Corollary \ref{cor:g0rec} purely on the Gromov-Witten side. In Appendix \ref{app:virasoro}, we give a comparison of our recursion with a recursion derived from the Virasoro constraints.  
\subsection{Acknowledgment}
We would like to thank Rahul Pandharipande, Honglu Fan, Shuai Guo, Hyenho Lho, and Michel van Garrel for helpful discussions. This project was supported by the National Key R\&D Program of China (No. 2022YFA1006200).

\section{Local/relative correspondence}\label{sec:loc/rel}
Let $D$ be a rational smooth ample divisor in a smooth projective surface $X$ over $\C$. In this section, as a first step towards the recursion formula \eqref{eqn:recurfm}, we prove Theorem \ref{thm:loc/rel} which relates generating series of local Gromov--Witten invariants of $\cO_X(-D)$ to generating series of relative Gromov--Witten invariants of the pair $(X,D)$. 

\subsection{Local/relative correspondence} We will prove Theorem \ref{thm:loc/rel} by applying the higher genus local/relative correspondence established in \cite{BFGW} which expresses the virtual cycle of $\bM_{g}(\mathcal{O}_X(-D),\beta)$ as 
\[\frac{(-1)^{\beta \cdot D-1}}{\beta \cdot D}F_*\left((-1)^g\lambda_g \cap [\bM_{g}(X/D,\beta)]^{\vir}\right)+ \sum_{\sG\in G_{g,\beta}}\frac{1}{|\Aut(\sG)|}(\tau_{\sG})_*\left(C_{\sG}\cap [\bM_{\sG}]^{\vir}\right)\,,\]
where the various terms are described as follows.
The map $F: \bM_{g}(X/D,\beta)\rightarrow \bM_g(X,\beta)$ is the obtained by forgetting the unique relative marking and stabilizing. Moreover, $G_{g,\beta}$ is a set of star type graphs $\sG=(V,E,g,b)$ of the following form: the set $V$ of vertices of $\sG$ admits a decomposition $\{v\}\coprod V_1$ such that 
for every $v_i \in V_1$, there is a unique edge between $v$ and $v_i$, and all edges of $\sG$ are of this form. In addition, we have maps
$\g:V\rightarrow \Z_{\geq 0}$ and $b:V\rightarrow H_2(D,\Z)\cup H_2(X,\Z)$ assigning a genus and a curve class to every vertex, such that $b(v) \in H_2(D,\Z)$, $b(v_i) \in H_2(X,\Z)$ for every $v_i \in V_1$, $\sum_{v\in V}\g(v)=g$ and $\sum_{v_i\in V_1}b(v_i)+\iota_*(b(v))=\beta$ where $\iota:D\hookrightarrow X$ is the inclusion of $D$ in $X$.

The moduli space $\bM_{\sG}$ decorated by a graph $\sG \in G_{g,\beta}$ is
\[\bM_{\sG}=\left(\prod_{v_i}\bM_{\g(v_i)}(X/D,b(v_i))\right)\times_{D^{|E|}}\bM_{\g(v),|E|}(D,b(v))\,, \] 
where $|E|$ is the number of edges of $\sG$. 
The virtual cycle of $\bM_{\sG}$ is defined using the Gysin map associated to the diagonal map $\Delta:D^{|E|}\rightarrow D^{|E|}\times D^{|E|}$ as:
\[[\bM_{\sG}]^{\vir} := \Delta^!\left(\prod_{v_i\in V_1}\left[\bM_{\g(v_i)}(X/D,b(v_i))\right]^{\vir}\times \left[\bM_{\g(v),|E|}(D,b(v))\right]^{\vir}\right).\]
Finally, $C_{\sG}$ is a certain class in the Chow ring of $\bM_{\sG}$ containing a factor of $\prod_{v_i \in V_1} \lambda_{g_{v_i}}$ (see \cite[Section 2.2]{BFGW} for more details), and $\tau_{\sG}$ is the natural gluing map from $\bM_{\sG}$ to $\bM_g(X,\beta)$.

In our case, the definition of local invariants $N_{g,\beta}^{\cO_X(-D)}$ also includes the insertions of point classes. So we need to generalize the above expression for $[\bM_{g}(\mathcal{O}_X(-D),\beta)]^{\vir}$ to the case of 
\begin{equation}\label{eqn:markins}
\prod_{i=1}^m ev_i^*([pt])\cap [\bM_{g,m}(\mathcal{O}_X(-D),\beta)]^{\vir}.
\end{equation}
Following the same argument as in \cite{BFGW}, one obtains that \eqref{eqn:markins} can be expressed as 
\[\prod_{i=1}^m ev_i^*([pt])\cap\left(\frac{(-1)^{\beta \cdot D-1}}{\beta \cdot D}F_*\left((-1)^g\lambda_{g} \cap [\bM_{g,m}(X/D,\beta)]^{\vir}\right)+ \sum_{\sG\in G_{g,\beta,m}}\frac{1}{|\Aut(\sG)|}(\tau_{\sG})_*\left(C_{\sG}\cap [\bM_{\sG}]^{\vir}\right)\right).\]
Here each graph $\sG=(V,E,g,b,a)\in G_{g,\beta,m}$ contains an addition data $a$ which gives an assignment of the $m$ markings to the vertices in $V_1$. Let $|a^{-1}(v_i)|$ denote the number of additional markings assigned to the vertex $v_i\in V_1$. Then $\bM_{\sG}$ becomes
\[\left(\prod_{v_i}\bM_{\g(v_i),|a^{-1}(v_i)|}(X/D,b(v_i))\right)\times_{D^{|E|}}\bM_{\g(v),|E|}(D,b(v))\] 
and the definition of $C_{\sG}$ is the same as in the $m=0$ case. 

\subsection{Proof of Theorem \ref{thm:loc/rel}} Now let $m=\Tg\cdot\beta$ and apply the degree map to \eqref{eqn:markins}, we get $N_{g,\beta}^{\cO_X(-D)}$. If we apply the degree map to
\[\prod_{i=1}^m ev_i^*([pt])\cap\left(\frac{(-1)^{\beta \cdot D-1}}{\beta \cdot D}F_*\left((-1)^g\lambda_{g} \cap [\bM_{g,m}(X/D,\beta)]^{\vir}\right)\right)\]
then we get $\frac{(-1)^{D\cdot\beta-1}}{D\cdot\beta}N_{g,\beta}^{X/D}$. We are left to study
\begin{equation}\label{eqn:corrtms}
\prod_{i=1}^m ev_i^*([pt])\cap\left( \sum_{\sG\in G_{g,\beta,m}}\frac{1}{|\Aut(\sG)|}(\tau_{\sG})_*\left(C_{\sG}\cap [\bM_{\sG}]^{\vir}\right)\right).
\end{equation}
Since all the markings are assigned to the vertices in $V_1$ and $C_{\sG}$ always contains a factor $\prod_{v_i\in V_1}\lambda_{\g(v_i)}$, the cohomology class assigned to the factor $\prod_{v_i\in V_1}\bM_{\g(v_i),|a^{-1}(v_i)|}(X/D,b(v_i))$ in $\bM_{\sG}$ has degree at least
\[\deg\left( \prod_{i=1}^m ev_i^*([pt])\prod_{v_i\in V_1}\lambda_{\g(v_i)}\right)=2m+\sum_{v_i\in V_1}\g(v_i)\,.\]
Given the virtual dimension
\[\dim \left(\prod_{v_i\in V_1}\left[\bM_{\g(v_i),|a^{-1}(v_i)|}(X/D,b(v_i))\right]^{\vir}\right) =m+\sum_{v_i\in V_1}\left(\Tg\cdot b(v_i)+\g(v_i)\right)\,,\]
in order to get a nonzero contribution to \eqref{eqn:corrtms}, we need 
\[m+\sum_{v_i\in V_1}\left(\Tg\cdot b(v_i)+\g(v_i)\right)\geq 2m+\sum_{v_i\in V_1}\g(v_i)\,,\]
which is equivalent to
\[\sum_{v_i\in V_1}\Tg\cdot b(v_i)\geq m=\Tg\cdot\beta=\Tg\cdot\iota_*(b(v))+\sum_{v_i\in V_1}\Tg\cdot b(v_i),\]
i.e., $\Tg\cdot\iota_*(b(v))\leq 0$.
As $D$ is a curve, $\iota_*(b(v))$ is a multiple of the fundamental class of $D$, and so
$\Tg\cdot\iota_*(b(v))=n\Tg\cdot D$ where $n\in \Z_{\geq 0}$ is the degree of $b(v)\in H_2(D,\Z)$. The condition that $D$ is rational further implies that $\Tg\cdot D=2>0$ by the adjunction formula. So we must have $n=0$. In the this case, it is not possible to add further insertions on the factor $\prod_{v_i\in V_1}\bM_{\g(v_i),|a^{-1}(v_i)|}(X/D,b(v_i))$. So if we apply the degree map to \eqref{eqn:corrtms}, the K\"unneth decomposition of the diagonal class $[\Delta]\in H^2(D^{|E|}\times D^{|E|}, \Z)$ contributes $|E|$ insertions of the point class of $D$ to the factor $\bM_{\g(v),|E|}(D,b(v))$ in $\bM_{\sG}$. Since $b(v)=0$,
it is not possible to have more than one point class insertions to 
$\bM_{\g(v),|E|}(D,0)$ for a nonzero invariant. This implies that $|E|=1$.

From the above discussion, we conclude that if we apply the degree map to \eqref{eqn:corrtms}, then only those $\sG\in G_{g,\beta,m}$ such that 
\[\bM_{\sG}=\bM_{g_1,m}(X/D,\beta)\times_{D}\bM_{g_2,1}(D,0)\]
give nonzero contributions where $g_1+g_2=g$, $g_2>0$. By the description of the class   
$C_{\sG}$
in \cite[Section 2.2]{BFGW}, the corresponding contribution is 
\[N_{g_1,\beta}^{X/D}(-1)^{g_2-1+D\cdot\beta}(D\cdot\beta)^{2g_2-1}\int_{[\bM_{g_2,1}(D,0)]^{\vir}}\psi_1^{2g_2-2}ev_1^*(w)\,.\]
Here $w$ is the point class of $D$ and $\psi_1$ is the psi-class associated to the unique marking. The degree zero Gromov--Witten invariant is determined by a Hodge integral on the moduli space of curves:
\[\int_{[\bM_{g_2,1}(D,0)]^{\vir}}\psi_1^{2g_2-2}ev_1^*(w)=(-1)^{g_2}\int_{[\bM_{g_2,1}]}\psi_1^{2g_2-2}\lambda_{g_2},,\]
and so we conclude that
\begin{equation}\label{eqn:loc/rel_num}
N_{g,\beta}^{\cO_X(-D)}=\sum_{g_1+g_2=g}N_{g_1,\beta}^{X/D}(-1)^{D\cdot\beta-1}(D\cdot\beta)^{2g_2-1}\int_{[\bM_{g_2,1}]}\psi_1^{2g_2-2}\lambda_{g_2}.
\end{equation}
Here we simply set $\int_{[\bM_{g_2,1}]}\psi_1^{2g_2-2}\lambda_{g_2}=1$ when $g_2=0$ so as to include the term $\frac{(-1)^{D\cdot\beta-1}}{D\cdot\beta}N_{g,\beta}^{X/D}$ in the right-hand side of \eqref{eqn:loc/rel_num}. The Hodge integrals appearing in \eqref{eqn:loc/rel_num} have been computed by Faber and Pandharipande in \cite{FP}: Theorem \ref{thm:loc/rel}:
\[F_{\beta}^{\cO_X(-D)}=F_{\beta}^{X/D}\frac{(-1)^{D\cdot\beta-1}}{2\sin(\frac{(D\cdot\beta)h}{2})}\]
follows from \eqref{eqn:loc/rel_num} and \cite[Theorem 2]{FP}.

\section{GW/quiver correspondence}\label{sec:GW/Kr}

In this section, after a review of quiver representations and 
of the GW/quiver correspondence of \cite{Bou21}, we prove Theorems \ref{thm:dual}, \ref{thm:f/uf}, \ref{thm:smres} on geometric properties of moduli spaces of quiver representations. 

\subsection{Quiver moduli}\label{subsec:quiverintr}
We give a brief introduction to the moduli of representations of quivers in this section. The main reference is \cite{R08}. We always work over $\C$.

A quiver $Q$ consists of a finite set of vertices $Q_0$ and a finite set of arrows $Q_1=\{\alpha: i\rightarrow j|i,j\in Q_0\}$. Let
\[\Z Q_0\coloneqq \bigoplus_{i\in Q_0} \Z e_i\]
where $\{e_i\}_{i\in Q_0}$ forms a natural basis. The Euler form on $\Z Q_0$ is defined by
\[\langle\underline{a},\underline{b}\rangle\coloneqq \sum_{i\in Q_0}a_ib_i-\sum_{\alpha:i\rightarrow j\in Q_1}a_ib_j\]
for $\underline{a}=(a_i)_{i\in Q_0},\,\underline{b}=(b_i)_{i\in Q_0}\in \Z Q_0$. We further use $(\underline{a},\underline{b})\coloneqq\langle\underline{a},\underline{b}\rangle+\langle\underline{b},\underline{a}\rangle$ to denote symmetrized Euler form and use $\{\underline{a},\underline{b}\}\coloneqq\langle\underline{a},\underline{b}\rangle-\langle\underline{b},\underline{a}\rangle$ to denote the antisymmetrized Euler form.

A representation of a quiver $Q$ consists of a tuple of vector spaces $(V_i)_{i\in Q_0}$ indexed by the vertices, plus a tuple of linear morphisms $(V_{\alpha}:V_i\rightarrow V_j)_{\alpha:i\rightarrow j}$ indexed by the arrows. A morphism between two representations $V,W$ consists of a tuple of linear morphisms $\{f_i:V_i\rightarrow W_i\}_{i\in Q}$ such that $f_j\circ V_\alpha=W_{\alpha}\circ f_i$ holds for arbitrary arrow $\alpha:i\rightarrow j$.
The complex representations of a fixed quiver $Q$ form a category via componentwise composition. We denote it as $\Rep_{\C}Q$. Actually, $\Rep_{\C}Q$ is a $\C$-linear abelian category with finite length.

Let $\underline{d}=(\dim V_i)_{i\in Q_0}$ be a dimension vector. The vector space
\[R_{\bard}(Q)\coloneqq \bigoplus_{\alpha:i\rightarrow j}\Hom(V_i,V_j)\]
parametrizes all the representations of $Q$ with fixed dimensions. The group
\[G_{\bard}(Q)\coloneqq\bigoplus_{i\in Q_0} GL(V_i) \]
naturally acts on $R_{\bard}$ as follows:
\begin{eqnarray*}
G_{\bard}(Q)\times R_{\bard}(Q) & \longrightarrow & R_{\bard}(Q)\\
(g_i)_{i\in Q_0}\circ (V_{\alpha})_{\alpha:i\rightarrow j} & \longmapsto & (g_j\circ V_{\alpha}\circ g_i^{-1})_{\alpha:i\rightarrow j}
\end{eqnarray*}
We want to construct a moduli which parametrizes the isomorphism classes of quiver representations. The quotient of $R_{\bard}$ by $G_{\bard}$ might not yield an interesting moduli space. For example, if $Q$ is acyclic, that is, if there is no directed cycles in $Q$, then \[R_{\bard}//G_{\bard}:=\Spec(\C[R_{\bard}]^{G_{\bard}})\]
is just a point. To have a good moduli of quiver representations, we need to introduce a stability condition. A \emph{stability condition} 
\[\theta=\sum_{i\in Q_0}\theta_i e_i^*\]
is an element in the dual space $(\Z Q_0)^*$, where  $\{e_i^*\}_{i\in Q_0}$ is the dual basis, that is, $\theta(\bard)=\sum_{i\in Q_0}\theta_id_i$.

A stability condition $\theta$ induces a \emph{slope function} $\mu:\N Q_0\setminus \{0\}\rightarrow \Q$ given by \
\[\mu(\bard)=\frac{\theta(\bard)}{|\bard|}\] 
where $|\bard|=\sum_{i\in Q_0} d_i$. A representation $V$ is called \emph{$\theta$-semistable} (resp. \emph{$\theta$-stable})
if $\mu(U)\leq \mu(V)$ (resp. $\mu(U)< \mu(V)$) for all the nonzero proper subrepresentations $U$ of $V$.  We use $R_{\bard}^{\theta - sst}$ (resp. $R_{\bard}^{\theta - st}$)
to denote the set of $\theta$-semistable (resp. $\theta$-stable) representation in $R_{\bard}$. The quotient $M_{\bard}^{\theta-sst}=R_{\bard}^{\theta- sst}//G_{\bard}$ parametrizes the closed orbits.\footnote{The closed orbits are in one-to-one correspondence with the isomorphism classes of $\theta$-polystable representations.} The moduli space $M_{\bard}^{\theta-sst}$ contains a Zariski open subset $M_{\bard}^{\theta-st}=R_{\bard}^{\theta- st}/G_{\bard}$ which parametrizes $\theta$-stable representations in $R_{\bard}$. Both $M_{\bard}^{\theta-sst}$ and $M_{\bard}^{\theta-st}$ are irreducible with dimension $1-<\bard,\bard>$ if $M_{\bard}^{\theta-st}$ is not empty.

Without further mention, the stability condition in this paper is always assumed to be $\theta(\cdot)=\{\bard,\cdot\}$ for a given dimension vector $\bard$. Using the terminology of \cite{APflow, MPattr, ABflow}, $\theta=\{\bard,\cdot\}$ is the 
\emph{anti-attractor} stability condition.







\subsection{GW/quiver correspondence}\label{subsec:GW/Kr}
In this subsection, we explain how Theorem 
\ref{thm:GW/Kr} follows from the GW/quiver correspondence of \cite{Bou21}.

The set-up of \cite{Bou21} is a smooth projective surface $Y$ over $\C$, with two smooth non-empty divisors $D_1$ and $D_2$ intersecting transversally, and such that the union $D_1\cup D_2$ is anticanonical.
We consider the following maximal contact relative Gromov--Witten invariants of the pair $(Y,D_1)$: 
\[N_{g,\beta}^{Y/D_1}\coloneqq \int_{[\bM_{g,m}(X/D,\beta)]^{\vir}}\prod_{i=1}^m ev_i^*([pt])(-1)^g \lambda_g\]
where $\beta$ is a curve class on $Y$ such that $\beta \cdot D_1>0$ and $\beta \cdot D_2>0$, $m=\beta \cdot D_2$, and $\lambda_g$ is the top Chern class of the Hodge bundle. The main result of \cite{Bou21} is the construction of a quiver $Q_\beta^{Y/D_1}$ and of a dimension vector $d(\beta)$ such that, if 
$Q_\beta^{Y/D_1}$ is acyclic, we have by 
\cite[Theorem 1.2]{Bou21}
\begin{equation} \label{eq:gw_dt}
\Omega_{M_\beta}(q)=(-1)^{\beta \cdot D_1+1} \frac{1}{(2 \sin(h/2))^{\beta \cdot D_2 -1}} \sum_{g \geq 0} N_{g,\beta}^{Y/D_1} \hbar^{2g-1+\beta \cdot D_2} \,,\end{equation}
where $M_\beta$ is the moduli space of $\theta$-semistable representations of 
$Q_\beta^{Y/D_1}$ of dimension $d(\beta)$, where $\theta=\{ d(\beta),-\}$, and $q=e^{\sqrt{-1}h}$.

One can take $(Y,D_1)=(X,D)$ for the first two pairs in Theorem \ref{thm:GW/Kr}: see \cite[6.1]{Bou21} for 
$(\PP^2,\lne)$, and \cite[6.2]{Bou21} for $(\PP^2,\conic)$. For each case, the quiver $Q_\beta^{Y/D_1}$ and the dimension vector $d(\beta)$ are given as in the statement of Theorem \ref{thm:GW/Kr}. Using that $\beta\cdot D_2=\beta \cdot (-K_X-D)=\Tg\cdot\beta$, one deduces that Theorem \ref{thm:GW/Kr} for these pairs follows from \eqref{eq:gw_dt}. 

For $(F_n,C_n+f)$, we use that this pair is deformation equivalent to the pair $(F_{n-2}, C_{n-2}+2f)$, the class $\beta=d_1 C_{-n}+d_2 f$ on $F_n$ deforming to the class $\beta'
=d_1 C_{-(n-2)}+d_2 f$ on $F_{n-2}$. The pair $(Y,D_1)=(F_{n-2}, C_{n-2}+2f)$ is considered in \cite[6.10]{Bou21}. The quiver $Q_{\beta'}^{Y/D_1}$ and the dimension vector $d(\beta')$ given in \cite[6.10]{Bou21} are related to the quiver and dimension vector in Theorem \ref{thm:GW/Kr} by mutation of the bottom right vertex. In general, one defines in \cite{Bou21} a quiver $Q_{\beta'}^{Y/D_1}$ for each choice of toric model of $(Y,D_1 \cup D_2)$ and mutations of quivers correspond to changes of toric models. Therefore, the quiver and dimension vector in Theorem \ref{thm:GW/Kr}
can be obtained directly from the construction of \cite{Bou21} for a different choice of toric model.
One concludes that Theorem \ref{thm:GW/Kr} for $(F_n,C_n+f)$ also follows from \eqref{eq:gw_dt}. 

\begin{rmk}
There are unfortunately two small errors in the statement of \cite[Theorem 1.2]{Bou21} as written in the published version of \cite{Bou21}. First, the pre-factor containing $2 \sin(h/2)$ is not the right one. Second, quiver DT invariant are taken in with respect to a 
``maximally non-trivial stability condition" rather than with respect to the anti-attractor stability condition. Both errors are corrected in the most recent arXiv version of \cite{Bou21}.
\end{rmk}

\subsection{Duality}\label{subsec:dual}
In this subsection, we prove Theorem \ref{thm:dual}. First, let us give a brief introduction of reflection functors following \cites{BGP,W13}.

For a quiver $Q=(Q_0,Q_1)$, we call a vertex $i\in Q_0$ a \emph{sink} (resp. a \emph{source}) if there are no arrows starting from $i$ (resp. ending on $i$). Let $i$ be a vertex. Then we use $\sigma_iQ$ to denote the quiver by reversing all arrows staring or ending at $i$. We will define a pair of reflection functors 
\[R_i^{+},\, R_i^{-}:\Rep_{\C}Q\longrightarrow \Rep_{\C}
\sigma_i Q\]
corresponding to the sink and source cases respectively. Let $V$, $W$ be two representations of $\Rep_{\C}Q$, and $f:V\rightarrow W$ be a morphism between them.

(I) Let us start from the case when $i$ is a sink. The reflection functor $R_i^{+}$ can be constructed as follows. We set $R_i^{+}V=X$ such that $X_j=V_j$ for $j\neq i$ and $X_i$ is given by the kernel of
\[(V_{\alpha}):\, \bigoplus_{\alpha\in Q_1\atop t(\alpha)=i}V_{s(\alpha)}\longrightarrow V_i\]
where $s,t:\,Q_1\rightarrow Q_0$ are two natural maps such that for each arrow $\alpha$ it starts at $s(\alpha)$ and terminates at $t(\alpha)$. For an arrow $\alpha$ such that $t(\alpha)\neq i$, we set the linear morphism $X_{\alpha}=V_{\alpha}$. Note that if $t(\alpha)=i$, then we need to reverse the direction of arrow $\alpha$ to get 
the arrow $\sigma_i\alpha$ in $\sigma_iQ$. The corresponding linear morphism $X_{\sigma_i\alpha}:X_i\rightarrow V_{s(\alpha)}=X_{t(\sigma_i\alpha)}$ is given by the composition of the natural inclusion and the projection:
$$X_i\hookrightarrow \bigoplus_{\alpha\in Q_1\atop t(\alpha)=i}V_{s(\alpha)}\longrightarrow V_{s(\alpha)}.$$
Similarly, we can construct $R_i^+W=Y$. For the morphism $R_i^+f=g:X\rightarrow Y$, let $g_j=f_j$ if $j\neq i$ and $g_i: X_i\rightarrow Y_i$ is defined to be the restriction of the linear morphism
\[(f_{s(\alpha)}):\, \bigoplus_{\alpha\in Q_1\atop t(\alpha)=i}V_{s(\alpha)}\longrightarrow \bigoplus_{\alpha\in Q_1\atop t(\alpha)=i}W_{s(\alpha)}.\]

(II) Next we consider the case when $i$ is a source. The reflection functor $R_i^-$ can be constructed as follows. Let $R_i^-V=X'$. Still we have $X'_j=V_j$ if $j\neq i$. But $X'_i$
is now given by the cokernel of
\[(V_{\alpha}):\,V_i\longrightarrow \bigoplus _{\alpha\in Q_1\atop s(\alpha)=i}V_{t(\alpha)}.\]
For an arrow $\alpha$ such that $s(\alpha)\neq i$, the linear morphism $X'_{\alpha}=V_{\alpha}$. If $s(\alpha)=i$, then $\sigma_i\alpha$ has an opposite direction as to $\alpha$. The corresponding linear morphism $X'_{\sigma_i\alpha}: X'_{s(\sigma_i\alpha)}=V_{t(\alpha)}\rightarrow X'_i$ is given by the composition of the natural inclusion and the quotient map:
\[V_{t(\alpha)}\hookrightarrow\bigoplus _{\alpha\in Q_1\atop s(\alpha)=i}V_{t(\alpha)}\longrightarrow X'_i.\]
Let $R_i^-W=Y'$. For the morphism $R_i^-f=g':X'\rightarrow Y'$, let $g'_j=f_j$ if $j\neq i$ and $g'_i:X'_i\rightarrow Y'_i$ is induced from \[(f_{t(\alpha)}):\, \bigoplus_{\alpha\in Q_1\atop s(\alpha)=i}V_{t(\alpha)}\longrightarrow \bigoplus_{\alpha\in Q_1\atop s(\alpha)=i}W_{t(\alpha)}.\]
The reflection functors satisfy the following properties.
Let $E_i$ be the simple representation such that $(E_i)_i=\C$ and $(E_i)_j=0$ if $j\neq i$.
\begin{thm}[\cite{BGP}]\label{thm:RF}
Let $i$ be a sink (resp. a source) and $V$ be an irreducible representation\footnote{A representation $V$ is called irreducible if $V\neq 0$ and $V=V_1\oplus V_2$ implies that $V_1=0$ or $V_2=0$.}. Then the reflection functor $R_i^+$ has the following properties (if $i$ is a source, then replace $+$ by $-$):
\begin{enumerate}
\item If $V=E_i$, then $R_i^+V=0$;
\item If $V\neq E_i$, then $R_i^+V$ is irreducible and $R_i^-R_i^+ V\simeq V$. Moreover, the dimension vector $\dim R_i^+V=r_i(\dim V)$ where $r_i:\Z Q_0\rightarrow \Z Q_0$ is given by
\[r_i(x)=x-(x,e_i)e_i.\]
Here $e_i$ is the the $i$th coordinate vector and $(\cdot,\cdot)$ is the symmetrized Euler form given before.
\end{enumerate}
\end{thm}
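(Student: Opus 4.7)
The plan is to treat the sink case; the source case follows by a symmetric argument (or by reversing all arrows). Let $i$ be a sink and set $\phi_i\colon \bigoplus_{\alpha: s(\alpha)\to i} V_{s(\alpha)}\to V_i$ for the natural map built from the $V_\alpha$ with $t(\alpha)=i$. Case (1) is immediate: if $V=E_i$, then the domain of $\phi_i$ is $0$, so $(R_i^+V)_i=\ker(\phi_i)=0$, and all other components are already $0$.

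For case (2), the first step is to show that \emph{$\phi_i$ is surjective} whenever $V$ is irreducible and $V\neq E_i$. I would argue by contradiction: if $\mathrm{Im}(\phi_i)\subsetneq V_i$, pick any vector space complement $U\subset V_i$. Because $i$ is a sink and $U$ lies outside the image of every arrow into $i$, the subspace concentrated at vertex $i$ with component $U$ is a subrepresentation $V''\subset V$, and setting $V'_i=\mathrm{Im}(\phi_i)$ and $V'_j=V_j$ for $j\neq i$ gives a complementary subrepresentation, whence $V\simeq V'\oplus V''$. Irreducibility forces $V''=0$ or $V'=0$; the latter would give $V\simeq E_i^{\oplus k}$, and then irreducibility forces $V=E_i$, contradicting the hypothesis. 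So $\phi_i$ is surjective, and the formula for $\dim R_i^+V$ follows from $\dim\ker\phi_i=\sum_{t(\alpha)=i}\dim V_{s(\alpha)}-\dim V_i$; comparing with $r_i(\dim V)=\dim V-(\dim V,e_i)e_i$ and the fact that $\langle e_i,\dim V\rangle=\dim V_i$ when $i$ is a sink, the two $e_i$-components match. All other components of $\dim R_i^+V$ agree with $\dim V$ by construction.

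Next, to prove $R_i^-R_i^+V\simeq V$, I would unwind the definitions at vertex $i$: after $R_i^+$, the vertex $i$ is a source in $\sigma_iQ$, and the outgoing maps from $(R_i^+V)_i=\ker(\phi_i)$ are the projections of the inclusion $\ker\phi_i\hookrightarrow \bigoplus V_{s(\alpha)}$. Hence $(R_i^-R_i^+V)_i$ is the cokernel of this inclusion, which is exactly $\mathrm{Im}(\phi_i)=V_i$ by the surjectivity established above. A direct check of the composed maps on each arrow yields an explicit natural isomorphism $R_i^-R_i^+V\simeq V$ (the non-$i$ components are untouched throughout).

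Finally, for irreducibility of $R_i^+V$, suppose $R_i^+V=W_1\oplus W_2$. Applying $R_i^-$ gives $V\simeq R_i^-W_1\oplus R_i^-W_2$, so by irreducibility of $V$ one summand, say $R_i^-W_2$, vanishes. Unwinding the definition, $R_i^-W_2=0$ forces $(W_2)_j=0$ for all $j\neq i$. But any nonzero element of $(W_2)_i\subset\ker\phi_i\subset \bigoplus_{t(\alpha)=i}V_{s(\alpha)}$ has some nonzero coordinate, which is moved by one of the arrows $\sigma_i\alpha$ of $\sigma_iQ$ into $(W_2)_{s(\alpha)}=0$, a contradiction. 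Hence $W_2=0$, proving $R_i^+V$ is irreducible. The main subtlety throughout is the surjectivity argument in the first step, since every subsequent claim hinges on being able to identify the cokernel in $R_i^-R_i^+V$ with $V_i$ itself; once this splitting argument is in place the rest is formal manipulation of the definitions.
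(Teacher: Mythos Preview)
The paper does not prove this theorem; it is quoted from \cite{BGP} (Bernstein--Gelfand--Ponomarev) and then used as input in the proof of Theorem~\ref{thm:dual}. So there is no in-paper proof to compare against.

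That said, your argument is the standard BGP proof and is correct. A couple of small remarks. In the irreducibility step you tacitly use that $R_i^-$ is additive (hence preserves direct sums); this is immediate from the construction since kernels/cokernels commute with finite direct sums, but it is worth saying. In the final contradiction you argue that a nonzero $w\in (W_2)_i\subset\ker\phi_i$ has a nonzero projection to some $V_{s(\alpha)}$ which must land in $(W_2)_{s(\alpha)}=0$; the cleaner phrasing is simply that all the outgoing maps of $\sigma_iQ$ from $i$ together reassemble the inclusion $\ker\phi_i\hookrightarrow\bigoplus V_{s(\alpha)}$, so they are jointly injective on $(R_i^+V)_i$, hence on $(W_2)_i$, forcing $(W_2)_i=0$. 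Your surjectivity-of-$\phi_i$ step via the splitting $V=V'\oplus V''$ is exactly the classical argument, and the dimension count is correct once one notes that for a sink $i$ one has $(\dim V,e_i)=2\dim V_i-\sum_{t(\alpha)=i}\dim V_{s(\alpha)}$.
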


We also need the following dual functor. Let $Q^{op}$ be the opposite quiver obtained from $Q$ by reversing the orientation of arrows. The vector space duality $\Hom_k(-,k)$ induces the natural dual functor:
\[D:\,\Rep_{\C}Q\longrightarrow \Rep_{\C}
 Q^{op}\]
which sends a quiver representation to its dual representation. For a $\theta$-semistable representation, the dual representation is $(-\theta)$-semistable. So it naturally induces the following isomorphism 
\[D:\,M_{\bard}^{\theta-sst}(Q)\simeq M_{\bard}^{(-\theta)-sst}(Q^{op}).\]

Now the isomorphisms appearing in Theorem \ref{thm:dual} are induced from the reflection functors together with the dual functor above.
\begin{proof}[Proof of Theorem \ref{thm:dual}]
Let us start with case (1): $M_{m,d}^L\simeq M_{m,m-d}^L$ if $m$, $d$ are coprime and $m-d>0$; The isomorphism was first established by Reineke and Weist in \cite{RW21} by showing that both $M_{m,d}^L$ and $M_{m,m-d}^L$ are isomorphic to a third geometric quotient space. 

Let us first reinterpret their proof in terms of the reflection functors and the dual functor given above. We will then show  that all the other cases of Theorem \ref{thm:dual} can be deduced via a parallel argument.
The quiver $Q$ in this case is 
\begin{center}
\includegraphics[width=\textwidth]{images/P1.png}
\end{center}
with dimension vector $\underline{d}=\sum_{k=1}^m e_{i_k}+de_j$. $M_{m,d}^L$ is simply the moduli space $M_{\bard}^{\theta-sst}$ of $\theta$-semistable representations associated to the above quiver with fixed dimension vector $\underline{d}$. Note that the stability condition $\theta$ is always assumed to be $\theta(\cdot)=\{\underline{d},\cdot\}$. It is then easy to check that $M_{\bard}^{\theta-sst}=M_{\bard}^{\theta-st}$ since $m,d$ are coprime.

The vertex $j$ is a sink, so we can apply the reflection functor $R_j^{+}$. After further composition with the dual functor $D$, $D\circ R_j^+$ will map a $\theta$-stable representation (which is always irreducible) in $M_{m,d}^L$ to a representation of the above quiver $Q$ with dimension vector $r_j(\underline{d})=\sum_{k=1}^m e_{i_k}+(m-d)e_j$.
Actually, one can further check that $D\circ R_j^+$ induces an isomorphism between $M_{m,d}^L$ and $M_{m,m-d}^L$ (with inverse $R_j^{-}\circ D$) which coincides with the one given by Reineke-Weist in their proof. 
The key point is to show that
for a given representation $V$ of $R_{\underline{d}}(Q)$, $V$ is $\theta$-stable if and only if $D\circ R_j^+(V)$ is a $\theta$-stable representation of $R_{r_j(\underline{d})}(Q)$ which follows from a simple linear algebra fact \cite[Lemma 3.2]{RW21}. Here with a slightly abuse of notation, we still use $\theta$ to denote the corresponding stability condition for $R_{r_j(\underline{d})}(Q)$, i.e., $\theta(\cdot)=\{r_j(\underline{d}),\cdot\}$. This gives a reinterpretation of Reineke-Weist's proof for case (1) using the reflection functors and the dual functor.

For case (2): $M_{m,d}^C\simeq M_{m,2m-d}^C$ if $m$, $d$ are coprime and $2m-d>0$, the corresponding quiver $Q'$ is 
\begin{center}
\includegraphics[width=\textwidth]{images/P2.png}
\end{center}
with dimension vector $\underline{d}=\sum_{k=1}^m e_{i_k}+de_j$. $M_{m,d}^C$ 
is the moduli of $\theta$-stable representations with fixed dimension vector $\underline{d}$. Then similar to case (1), one can show that $D\circ R_j^+$ induces an isomorphism between $M_{m,d}^C\simeq M_{m,2m-d}^C$. The key point is still to show that for a given representation $V$ of $R_{\underline{d}}(Q')$, $V$ is $\theta$-stable if and only if $D\circ R_j^+(V)$ is a $\theta$-stable representation of $R_{r_j(\underline{d})}(Q')$ which still follows from 
the linear algebra fact \cite[Lemma 3.2]{RW21}. Note that here $r_j(\underline{d})=\sum_{k=1}^m e_{i_k}+(2m-d)e_j$

Similar for cases (3) and (4), they correspond to quivers
\begin{center}
\includegraphics[width=\textwidth]{images/HS.png}
\end{center}
with dimension vectors always given by $\underline{d}=\sum_{k=1}^m e_{i_k}+d_1e_{j_1}+d_2e_{j_2}$. 
Both of the isomorphisms are given by the functor $D\circ R_{j_1}^+\circ R_{j_2}^+$. We miss the details here because they are parallel to cases (1) and (2). But we note that 
\[r_{j_1}\circ r_{j_2}(\underline{d})=\sum_{k=1}^m e_{i_k}+((n^2-1)d_2-nd_2+(n+1)m)e_{j_1}+(nd_1+m-d_2)e_{j_2}\]
and $D\circ R_{j_1}^+\circ R_{j_2}^+$ maps representations associated to the above quiver to representations associated to a different quiver which can be derived from the above quiver by simply changing all the arrow directions between vertices $j_1$ and $j_2$.
\end{proof}

\subsection{Other geometric properties}\label{subsec:othergeo}
In this subsection, we prove Theorems \ref{thm:f/uf} and \ref{thm:smres}.

Before we give a proof of Theorem \ref{thm:f/uf}, we need some preparation. Given a quiver $Q=(Q_0,Q_1)$ and a vector of non-negative integers $\underline{n}\in \N Q_0$. According to \cite[Section 2.3]{RW21}, the moduli space of $\theta$-semistable $\underline{n}$-framed representations of $Q$ with dimension $\underline{d}$ can be defined as follows:

Let $\widehat{Q}$ be the framed quiver of $Q$ which is derived from $Q$ by adding an additional vertex $i_0$ and $n_i$ arrows from $i_0$ to $i\in Q_0$. We extend the dimension vector $\underline{d}$ to a dimension vector $\underline{\hat{d}}$ of $\widehat{Q}$ by adding the entry $1$ at vertex $i_0$. Assume that $\theta$ is normalized, i.e., $\theta(\underline{d})=0$. We then also extend the stability $\theta$ to a stability $\hat{\theta}$ of $\widehat{Q}$ by adding the entry $1$ for the vertex $i_0$. Then according to \cite{ER}, the moduli space $M_{\underline{d},\underline{n}}^{\theta,\text{fr}}(Q)$ of $\theta$-semistable $\underline{n}$-framed representations of $Q$ with dimension $\underline{d}$ is isomorphic to the moduli space $M_{\underline{\hat{d}}}^{\hat{\theta}-sst}(\widehat{Q})$ of $\hat{\theta}$-semistable representations of $\widehat{Q}$ with dimension $\underline{\hat{d}}$.

We set $\underline{n}=e_j$ for the quiver associated to $(\PP^2,\lne)$, set $\underline{n}=2e_j$ for the quiver associated to $(\PP^2,\conic)$, and set $\underline{n}=e_{j_1}+e_{j_2}$ for quivers associated to $(F_n,C_n+f)$. We further use $M_{m,d}^{L,\text{fr}}$, $M_{m,d}^{C,\text{fr}}$, $M_{m,d_1,d_2}^{F_n,\text{fr}}$ to denote the corresponding $\underline{n}$-framing moduli spaces. Note that here the stability $\theta$ is always assumed to be $\theta(\cdot)=\{\bard,\cdot\}$ for a given dimension vector $\bard$. We are ready to prove Theorem \ref{thm:f/uf}.

\begin{proof}[Proof of Theorem \ref{thm:f/uf}]
Let us start from case (1): $M_{m,d}^{L,fr}\simeq M_{m+1,d}^L$ if $m$ divides $d$. This case is first established by Reineke and Weist in \cite{RW21} by comparing the stability conditions on both sides. Actually, one can easily check that both $M_{m,d}^{L,fr}$ and $M_{m+1,d}^L$ have the same underlying quiver and dimension vector $\sum_{k=0}^{m}e_{i_k}+de_j$. Here we use $i_0,\cdots,i_m$ to denote
the left $m+1$ vertices for the corresponding quiver of $M_{m+1,d}^L$. The stability for $M_{m,d}^{L,fr}$ is given by
$$\hat{\theta}=\sum_{k=0}^{m} e_{i_k}^*-\frac{m}{d}e_j^*$$
while the stability for $M_{m+1,d}^L$ is 
$$\theta'=d\left(\sum_{k=0}^{m} e_{i_k}^*\right)-(m+1)e_j^*.$$
Then we have 
\[\hat{\theta}=\frac{(m+d)}{d(m+1+d)}\theta'+\frac{1}{m+1+d}\dim\]
where $\dim=\sum e_{i_k}^*+e_j^*$ is the linear function recording the total dimension. So $M_{m,d}^{L,fr}\simeq M_{m+1,d}^L$ if $m$ divides $d$. In a similar way, we can prove case (2): $M_{m,d}^{C,fr}\simeq M_{m+1,d}^{C}$ if $m$ divides $d$; and case (3): $M_{m,d_1,d_2}^{F_0,fr}\simeq M_{m+1,d_1,d_2}^{F_0}$ if $m$ divides $d_1+d_2$. 

As for case (4): $M_{(1-n)d_1+d_2,d_1,d_2}^{F_n,fr}\simeq M_{(1-n)d_1+d_2+1,d_1,d_2}^{F_n}$ with $n>0$, we need a more careful analysis of the stability conditions on both sides. In this case, $m=(1-n)d_1+d_2\geq 0$.
First, it is easy to see that both sides have the same underlying quiver with dimension vector 
$$\sum_{k=0}^m e_{i_k}+d_1e_{j_1}+d_2e_{j_2}.$$
The stability for $M_{(1-n)d_1+d_2,d_1,d_2}^{F_n,fr}$ is 
$$\hat{\theta}=\sum_{k=0}^m e_{i_k}^*+(n-1)e_{j_1}^*-e_{j_2}^*$$
while the stability for $M_{(1-n)d_1+d_2+1,d_1,d_2}^{F_n}$ is 
\[\theta'=(d_1+d_2)\left(\sum_{k=0}^m e_{i_k}^*+(n-1)e_{j_1}^*-e_{j_2}^*\right)-(e_{j_1}^*+e_{j_2}^*).\]
Let $V$ be a $\hat{\theta}$-semistable representation. Then for all the nonzero proper subrepresentations $U\subset V$, we have 
\[\frac{\hat{\theta}(\textbf{dim}\, U)}{\dim U}\leq \frac{\hat{\theta}(\textbf{dim}\, V)}{\dim V}=\frac{1}{\dim V}\]
where we use $\textbf{dim}\,U$, $\textbf{dim}\,V$ to denote the corresponding dimension vectors and use $\dim U$, $\dim V$ to denote the corresponding total dimensions. Since $\frac{\dim U}{\dim V}<1$ as $U$ is a proper subrepresentation and $\hat{\theta}(\textbf{dim}\, U)$ must be an integer, the above equality is equivalent to
\[\hat{\theta}(\textbf{dim}\, U)\leq 0.\]
Next if  $V$ is a $\theta'$-semistable representation, then we have 
\[\theta'(\textbf{dim}\,U)=(d_1+d_2)\hat{\theta}(\textbf{dim}\,U)-(e_{j_1}^*+e_{j_2}^*)(\textbf{dim}\,U)\leq 0\]
for all the nonzero proper subrepresentations $U\subset V$. As $U$ is proper and $\hat{\theta}(\textbf{dim}\,U)$ is an integer, the above equality is also equivalent to 
\[\hat{\theta}(\textbf{dim}\, U)\leq 0.\]
We may conclude that $V$ is $\hat{\theta}$-semistable if and only if it is $\theta'$-semistable. Then case (4): $M_{(1-n)d_1+d_2,d_1,d_2}^{F_n,fr}\simeq M_{(1-n)d_1+d_2+1,d_1,d_2}^{F_n}$ follows.

\end{proof}

Next, we are going to prove Theorem \ref{thm:smres}. 
According to \cite[Theorem 4.3]{R17}, for a quiver $Q$ with dimension vector $\underline{d}$ and stabilities $\theta$ and $\theta'$, if we assume that 
\begin{enumerate}
\item[(I)] the moduli $M_{\underline{d}}^{\theta-st}(Q)$ of $\theta$-stable representations is not empty;
\item[(II)] $\theta(\underline{d})=0$ and $\theta'$ is a generic deformation of $\theta'$;
\item[(III)] the restriction of Euler form $\langle\cdot,\cdot\rangle$ to $\Ker(\theta)$ is symmetric.
\end{enumerate}
then there exists a natural small resolution $p:M_{\underline{d}}^{\theta'-sst}(Q)\rightarrow M_{\underline{d}}^{\theta-sst}(Q)$. We have the following criteria \cite[Theorem 5.1]{R17} to check whether $\theta'$ is a generic deformation of $\theta$: 

We further assume $\underline{d}=(d_i)_{i\in Q_0}$ to be indivisible, i.e., $\gcd(d_i:i\in Q_0)=1$. Then there exists a stability $\eta$ such that $\eta(\underline{d})=0$ and $\eta(\underline{e})\neq 0$ for any $\underline{e}\in \N Q_0$ with $0\neq \underline{e}\lneqq \underline{d}$ and $\theta(\underline{e})=0$. If there exist a constant $C\in \N$ with 
\[C>\max(\max(\eta(\underline{e}):\underline{e}\leq \underline{d},\theta(\underline{e})<0),\max(-\eta(\underline{e}):\underline{e}\leq \underline{d},\theta(\underline{e})>0))\]
such that $\theta'=C\theta+\eta$, then $\theta'$ is a generic deformation of $\theta$. We are now ready to prove Theorem \ref{thm:smres}.

\begin{proof}[Proof of Theorem \ref{thm:smres}]
Recall that stabilities $\theta$ of $M_{m,d}^{L}$, $M_{m,d}^{C}$, $M_{m,d_1,d_2}^{F_0}$ and $M_{(1-n)d_1+d_2,d_1,d_2}^{F_n}$ are always given by $\{\underline{d},\cdot\}$ where $\underline{d}$ are the corresponding dimension vectors. It is then easy to check that the restriction of Euler form $\langle\cdot,\cdot\rangle$ to $\Ker(\theta)$ is symmetric, i.e., condition (III) is always satisfied. It is also easy to check that condition (I) is always satisfied. This is due to the extra conditions we put for different cases of Theorem \ref{thm:smres}, e.g. we require that $m$ divides $d$ and $m>d$ for case (1). So we only need to check condition (II). For this, we will use the above criteria. 

Let us start from case (1). In this case, the stability condition $\theta'$ for $M_{m-1,d}^{L,fr}$ is given by
\[\theta'=(r+1)d\hat{\theta}-\dim\]
where we set $r=\frac{m}{d}$ and
\[\hat{\theta}=e_{i_0}^*+d\left(\sum_{k=1}^{m-1}e_{i_k}^*\right)-(rd-1)e_j^*.\]
Note that similar to the proof of Theorem \ref{thm:f/uf}, we use $i_0,\cdots,i_{m-1}$ to denote
the left $m$ vertices for the corresponding quiver of $M_{m-1,d}^{L,fr}$. The stability $\theta$ for $M_{m,d}^{L}$ can be explicit written down:
\[\theta=\sum_{k=0}^{m-1}e_{i_k}^*-re_j^*\]
Then we have 
\[\theta'=C\theta+\eta\]
where $C=(r+1)d^2-1$ and 
\[\eta=-(r+1)(d-1)(de_{i_0}^*-e_j^*).\]
Note that at least one entry of $\underline{d}=(d_i)_{i\in Q_0}$ is $1$. So $\underline{d}$ is indivisible. It is also quite quite easy to verify that the conditions for the criteria are satisfied. So $\theta'$ is a generic deformation of $\theta$. Note that the stability conditions $\theta$ and $\theta'$ for the corresponding quiver in case (2) of Theorem \ref{thm:smres} have the same expression as in case (1). So condition (II) is also satisfied for case (2). The expression of $\theta$ and $\theta'$ for the corresponding quiver in case (3) is quite similar. Actually, we only need to replace $d$ by $d_1+d_2$ and $e_j^*$ by $e_{j_1}^*+e_{j_2}^*$ in the expression of $\theta$, $\theta'$ for case (1). So condition (II) is also satisfied for case (3) of Theorem \ref{thm:smres}.

To verify condition (II) for case (4), we need a more careful analysis of the stability $\theta'$ for $M_{(1-n)d_1+d_2-1,d_1,d_2}^{F_n,fr}$. First, by an easy computation, we know that the stability $\tilde{\theta}$ for the unframed moduli $M_{(1-n)d_1+d_2-1,d_1,d_2}^{F_n}$ is 
\[\tilde{\theta}=(d_1+d_2)\left(\sum_{k=1}^{m-1}e_{i_k}^*+(n-1)e_{j_1}^*-e_{j_2}^*\right)+(e_{j_1}^*+e_{j_2}^*)\]
where we set $m=(1-n)d_1+d_2$. It is not hard to see that $\tilde{\theta}$ is equivalent to the stability
\[\sum_{k=1}^{m-1}e_{i_k}^*+(n-1)e_{j_1}^*-e_{j_2}^*.\]
The latter can be further normalized as 
\[\bar{\theta}=(m+d_1+d_2-1)\left(\sum_{k=1}^{m-1}e_{i_k}^*+(n-1)e_{j_1}^*-e_{j_2}^*\right)+\left(\sum_{k=1}^{m-1}e_{i_k}^*+e_{j_1}^*+e_{j_2}^*\right).\]
So the stability for framed moduli $M_{(1-n)d_1+d_2-1,d_1,d_2}^{F_n,fr}$ is given by
\[\hat{\theta}=e_{i_0}^*+\bar{\theta}\]
which can be further normalized as
\[\theta'=(m+d_1+d_2)\left(\sum_{k=1}^{m-1}e_{i_k}^*+(n-1)e_{j_1}^*-e_{j_2}^*\right)+\dim.\]
where $\dim=\sum_{k=0}^{m-1}e_{i_k}^*+e_{j_1}^*+e_{j_2}^*$. Recall that the stability $\theta$ for $M_{(1-n)d_1+d_2,d_1,d_2}^{F_n}$ is given by $\{\underline{d},\cdot\}$ which can be normalized as
\[\theta=\sum_{k=0}^{m-1}e_{i_k}^*+(n-1)e_{j_1}^*-e_{j_2}^*.\]
So we have 
\[\theta'=C\theta+\eta\]
where $C=(m+d_1+d_2)$ and $\eta=\dim-(m+d_1+d_2)e_{i_0}^*$. Using these explicit expressions for $C$ and $\eta$, it is easy to verify that $\theta'$ is a generic deformation of $\theta$.
\end{proof}

\section{BPS invariants}\label{section:bps}
In this section, we prove Theorem \ref{thm:bps} and Corollary \ref{cor:bpsca}.

\begin{proof}[Proof of Theorem \ref{thm:bps}]
Combining Theorems \ref{thm:loc/rel} and \ref{thm:GW/Kr}, we know that for the following pairs $(X,D)$:
\[(\PP^2,\lne),\quad (\PP^2,\conic),\quad (F_n,C_n+f),\,n\geq 0\]
we have 
\begin{equation}\label{eqn:pfbps1}
F_{\beta}^{\cO_X(-D)}=\Omega_{M_{\beta}^{X/D}}(q)\frac{(2\sin(h/2))^{\Tg\cdot\beta-1}}{2\sin(\frac{(D\cdot\beta)h}{2})}
\end{equation}
if $\Tg\cdot\beta>0$, where $q=e^{ih}$. We recall that $M_{\beta}^{X/D}$ can be determined from $(X,D)$ and $\beta$ as follows:
\begin{enumerate}
\item[(a)] if $(X,D)=(\PP^2,\lne)$ and $\beta=d[l]$, then $M_{\beta}^{X/D}=M^{L}_{2d,d}$ where $[l]$ is the line class;
\item[(b)] if $(X,D)=(\PP^2,\conic)$ and $\beta=d[l]$, then $M_{\beta}^{X/D}=M^{C}_{d,d}$;
\item[(c)] if $(X,D)=(F_n,C_n+f)$ and $\beta=d_1C_{-n}+d_2f$, then $M_{\beta}^{X/D}=M^{F_n}_{m,d_1,d_2}$ where $m=\Tg\cdot\beta=(1-n)d_1+d_2$.
\end{enumerate}
When the stable locus of $M_{\beta}^{X/D}$ is nonempty, then by Theorem \ref{thm:smres} we know that there is a small resolution of $M_{\beta}^{X/D}$ by a framed moduli. Such a framed moduli is a $\PP^{D\cdot\beta-1}$-bundle over the smooth quiver moduli $M^{\cO_X(-D)}_{\beta}$. The latter can be determined from $(X,D)$ and $\beta$ as follows:
\begin{enumerate}
\item[(i)] if $(X,D)=(\PP^2,\lne)$ and $\beta=d[l]$, then $M_{\beta}^{\cO_X(-D)}=M^{L}_{2d-1,d}$;
\item[(ii)] if $(X,D)=(\PP^2,\conic)$ and $\beta=d[l]$, then $M_{\beta}^{\cO_X(-D)}=M^{C}_{d-1,d}$;
\item[(iii)] if $(X,D)=(F_n,C_n+f)$ and $\beta=d_1C_{-n}+d_2f$, then $M_{\beta}^{\cO_X(-D)}=M^{F_n}_{m-1,d_1,d_2}$ where $m=\Tg\cdot\beta=(1-n)d_1+d_2$.
\end{enumerate}
So by Leray–Hirsch theorem, we have
\begin{equation}\label{eqn:pfbps2}
\Omega_{M_{\beta}^{X/D}}=P_{\PP^{D\cdot\beta-1}}\Omega_{M^{\cO_X(-D)}_{\beta}}
\end{equation}
where 
$$P_{\PP^{D\cdot\beta-1}}=(-1)^{D\cdot\beta-1}\frac{q^{\frac{D\cdot\beta}{2}}-q^{-\frac{D\cdot\beta}{2}}}{q^{1/2}-q^{-1/2}}=(-1)^{D\cdot\beta-1}\frac{2\sin(\frac{(D\cdot\beta)h}{2})}{2\sin(h/2)}$$
is the shifted Poincar\'e polynomial of $\PP^{D\cdot\beta-1}$. When the stable locus of $M_{\beta}^{X/D}$ is empty, one can easily check that the stable locus of $M^{\cO_X(-D)}_{\beta}$ is empty also. So equation \eqref{eqn:pfbps2} still holds.

Combining \eqref{eqn:pfbps1} and \eqref{eqn:pfbps2}, we have
\begin{equation}\label{eqn:GW/DT}
\frac{F_{\beta}^{\cO_X(-D)}}{(2\sin(h/2))^{\Tg\cdot\beta-2}}=(-1)^{D\cdot\beta-1}\Omega_{M^{\cO_X(-D)}_{\beta}}.
\end{equation}
By \eqref{eqn:bps}, the left-hand side equals to $\sum_{g} n_{g,\beta}^{\mathcal{O}_X(-D)}(2\sin(h/2))^{2g}$.
So
\begin{equation}\label{eqn:pfbps3}
\sum_{g} n_{g,\beta}^{\mathcal{O}_X(-D)}(2\sin(h/2))^{2g}=(-1)^{D\cdot\beta-1}\Omega_{M^{\cO_X(-D)}_{\beta}}.
\end{equation}
Finally, for those pairs $(X,D)=(F_n,C_n+(s+1)f)$ with $s\geq 1$ and $\beta=d_1C_{-n}+d_2f$, we set 
$M_{\beta}^{\cO_X(-D)}=M^{F_{n+2s}}_{m-1,d_1,d_2+sd_1}$ with $m=\Tg\cdot\beta=(1-n-s)d_1+d_2$. Then according to the deformation equivalence discussed in Section \ref{subsec:definv}, we know that \eqref{eqn:pfbps3} still holds.
\end{proof}

Next, we will give a proof of Corollary \ref{cor:bpsca} based on Theorem \ref{thm:bps}.
\begin{proof}[Proof of Corollary \ref{cor:bpsca}]
As one can easily check that $\Omega_{M^{\cO_X(-D)}_{\beta}}$ only consists of stable representations, the condition that $\Omega_{M^{\cO_X(-D)}_{\beta}}$ is not empty implies that 
\[\Omega_{M^{\cO_X(-D)}_{\beta}}(q)=(-q^{1/2})^{-\dim_{\C}M^{\cO_X(-D)}_{\beta}}\sum_i \dim H^i(M^{\cO_X(-D)}_{\beta},\Q)(-q^{1/2})^i.\]
Note that in this case, $\Omega_{M^{\cO_X(-D)}_{\beta}}$ is smooth. So the intersection cohomology of $M^{\cO_X(-D)}_{\beta}$ coincides with the usual cohomology.

Now Corollary \ref{cor:bpsca} follows from the facts that $\dim_{\C}M^{\cO_X(-D)}_{\beta}=1-<\underline{d},\underline{d}>$ where $\underline{d}$ is the dimension vector of $M^{\cO_X(-D)}_{\beta}$, and $M^{\cO_X(-D)}_{\beta}$ is connected, so $\dim H^0(M^{\cO_X(-D)}_{\beta},\Q)=1$.
\end{proof}

\section{Proof of Theorem \ref{thm:main}}\label{sec:proof}
We prove Theorem \ref{thm:main} in this section. First, by the deformation equivalence discussed in Section \ref{subsec:definv}, we only need to consider the following pairs $(X,D)$:
\[(\PP^2,\lne),\quad (\PP^2,\conic),\quad (F_n,C_n+f),\,n\geq 0.\]

Next, let us translate the recursion formula \eqref{eqn:recurfm} into a recursion formula of quiver DT-invariants by \eqref{eqn:GW/DT}:
\begin{equation}\label{eqn:qDTfm}
\Omega_{M^{\cO_X(-D)}_{\beta}}=\sum_{\beta_1+\beta_2=\beta\atop \beta_1,\beta_2>0} \Omega_{M^{\cO_X(-D)}_{\beta_1}}\Omega_{M^{\cO_X(-D)}_{\beta_2}}
\left(P_{\PP^{D\cdot\beta_1-1}}\right)^2
{\Tg\cdot\beta-3\choose \Tg\cdot\beta_1-1}.
\end{equation}
Here we recall that 
\[P_{\PP^{D\cdot\beta_1-1}}=(-1)^{D\cdot\beta_1-1}\frac{q^{\frac{D\cdot\beta_1}{2}}-q^{-\frac{D\cdot\beta_1}{2}}}{q^{1/2}-q^{-1/2}}.\]
When $(X,D)=(\PP^2,\lne)$, the above recursion was first derived by Reineke-Weist \cite[Theorem 1.2]{RW21} using a formula relating DT-invariants of framed moduli to unframed one together with some geometric properties of the corresponding quiver moduli\footnote{These geometric properties are just case (1) appearing in Theorems \ref{thm:dual}, \ref{thm:f/uf} and \ref{thm:smres}.}. Actually, we will show in the proof that their proof can be generalized to the cases $(\PP^2,\conic)$ and $(F_n,C_n+f)$ as well.
\begin{proof}[Proof of Theorem \ref{thm:main}]
Let $Q$ be a quiver with stability $\theta$. We use $\Lambda_0^{+}$ to denote the set of nonzero dimension vectors $\underline{d}$ such that $\theta(\underline{d})=0$. The key relation used in Reineke-Weist's proof can be described as follows:
\begin{equation}\label{eqn:keyrl}
1+\sum_{\underline{d}\in \Lambda_0^{+}}\Omega_{M_{\underline{d},\underline{n}}^{\theta,\text{fr}}}(-1)^{\underline{n}\cdot\underline{d}}x^{\underline{d}}=\Exp\left(\sum_{\underline{d}\in \Lambda_0^+}P_{\PP^{\underline{n}\cdot\underline{d}-1}}\Omega_{M_{\underline{d}}^{\theta-sst}}(-1)^{\underline{n}\cdot\underline{d}}x^{\underline{d}}\right)
\end{equation}
where $\Exp(\cdot)$ is the plethystic exponential\footnote{For a formal power series $f(x)$ without constant constant, $\Exp(f)=\exp(\sum_{k=1}^{\infty}\frac{f(x^k)}{k})$.}. Note that we need a technical assumption to make the above equality holds: the restriction of the Euler form $\langle\cdot,\cdot\rangle$ to $\Lambda_0^+$ is symmetric. This assumption always holds for all the cases we will discuss in the next. As before, for a given dimension vector $\underline{d}$, we always choose the stability $\theta$ to be $\{\underline{d},\cdot\}$.

Case I: $(X,D)=(\PP^2,\lne)$. We choose the quivers to be those corresponds to the pair $(\PP^2,\lne)$, and set $\underline{d}=\sum_{k=1}^{2d}e_{i_k}+de_j$, $\underline{n}=e_j$. In this case, $\theta=\sum_{k=1}^{2d} e_{i_k}^*-2e_j^*$.
Then using the previous notation, we have 
\[M_{\underline{d},\underline{n}}^{\theta,\text{fr}}=M_{2d,d}^{L,\text{fr}},\quad M_{\underline{d}}^{\theta-sst}=M_{2d,d}^L.\]
By Theorems \ref{thm:dual}, \ref{thm:f/uf} and \ref{thm:smres}, we have
\[M_{2d,d}^{L,\text{fr}}\simeq M_{2d+1,d}^L\simeq M_{2d+1,d+1}^L,\quad \Omega_{M_{2d,d}^L}=P_{\PP^{d-1}}\Omega_{M_{2d-1,d}^L}.\]
We set $z_d^{L}(q)=\Omega_{M_{2d-1,d}^L}(q)$. Then by \eqref{eqn:keyrl}, we may deduce that 
\[z_{d+1}^L=\sum_{a_1+a_2\cdots+a_d=d\atop a_i\geq 0}\frac{(2d)!}{\prod_{k=1}^d \big((2k)!\big)^{a_k}(a_k)!}\prod_{k=1}^d\left(\big(P_{\PP^{k-1}}\big)^2 z_k^L\right)^{a_k}\]
which is equivalent to 
\[\frac{z^L_{d+1}}{(2d)!}=\sum_{a_1+a_2\cdots+a_d=d\atop a_i\geq 0}\frac{1}{(a_k)!}\prod_{k=1}^d\left(\frac{\big(P_{\PP^{k-1}}\big)^2 z^L_k}{(2k)!}\right)^{a_k}.\]
So by summing over $d$, we have
\[1+\sum_{d>0}\frac{z^L_{d+1}}{(2d)!}x^d=\exp\left(\sum_{k>0}\frac{\big(P_{\PP^{k-1}}\big)^2 z^L_k}{(2k)!}x^k\right).\]
By further taking a derivative $2x\frac{d}{dx}$ on both sides, we have
\[\sum_{d>0}\frac{z^L_{d+1}}{(2d-1)!}x^d=\left(\sum_{k>0}\frac{\big(P_{\PP^{k-1}}\big)^2 z^L_k}{(2k-1)!}x^k\right)\left(\sum_{d\geq 0}\frac{z^L_{d+1}}{(2d)!}x^d\right).\]
Here we have used the fact that $z^L_1=1$ which can be deduced from the fact that $M_{1,1}^L$ is a point. So by taking the coefficients of $x^{d-1}$ on both sides, we get the recursion
\[z^L_d=\sum_{d_1+d_2=d\atop d_1,d_2>0}z^L_{d_1}z^L_{d_2}\big(P_{\PP^{d_1-1}}\big)^2{2d-3\choose 2d_1-1}.\]
This is exactly equation \eqref{eqn:qDTfm} for the pair $(\PP^2,\lne)$ by noting that $z^L_d=\Omega_{M_{d[l]}^{\cO_{\PP^2}(-1)}}$.

Case II: $(X,D)=(\PP^2,\conic)$. In this case, the quivers correspond to those of the pair $(\PP^2,\conic)$, and set $\underline{d}=\sum_{k=1}^{d}e_{i_k}+de_j$, $\underline{n}=e_j$. In this case, $\theta=\sum_{k=1}^{d} e_{i_k}^*-e_j^*$. Then using case (2) of Theorems \ref{thm:dual}, \ref{thm:f/uf} and \ref{thm:smres}, we have
\[M_{d,d}^{C,fr}\simeq M_{d+1,d+2}^C,\quad \Omega_{M_{d,d}^C}=P_{\PP^{2d-1}}\Omega_{M_{d-1,d}^C}.\]
In this case, we may deduce from \eqref{eqn:keyrl} that
\[1+\sum_{d>0}\frac{z_{d+2}^C}{d!}x^d=\exp\left(\sum_{k>0}\frac{\big(P_{\PP^{2k-1}}\big)^2 z^C_k}{(k)!}x^k\right)\]
with $z_d^C=\Omega_{M_{d-1,d}^C}$. By taking a derivative $x\frac{d}{dx}$ on both sides, we get the recursion
\[z_{d}^C=\sum_{d_1+d_2=d\atop d_1,d_2>0}z_{d_1}^Cz_{d_2}^C P_{\PP^{2d_1-1}}{d-3\choose d_1-1}\]
which is exactly equation \eqref{eqn:qDTfm} for the pair $(\PP^2,\conic)$. Here we have used the fact that $z_2^C=1$ which can be similarly deduced as in case (I).

Case III: $(X,D)=(F_n,C_n+f),\,n\geq 0$. We then choose the quivers to be those correspond to the pair $(F_n,C_n+f)$. For the curve class $\beta=d_1C_{-n}+d_2f$ with $\Tg\cdot\beta=(1-n)d_1+d_2\geq 0$, we then set the dimension vector $\underline{d}=\sum_{k=1}^{(1-n)d_1+d_2}e_{i_k}+d_1e_{j_1}+d_2e_{j_2}$ and $\underline{n}=e_{j_1}+e_{j_2}$. The stability $\theta=\sum_{k=1}^{(1-n)d_1+d_2}e_{i_k}^*+(n-1)e_{j_1}^*-e_{j_2}^*$.
In this case, we have
\[M_{\underline{d},\underline{n}}^{\theta,\text{fr}}=M_{(1-n)d_1+d_2,d_1,d_2}^{F_n,\text{fr}},\quad M_{\underline{d}}^{\theta-sst}=M_{(1-n)d_1+d_2,d_1,d_2}^{F_n}.\]
Then by cases (3), (4) of Theorems \ref{thm:dual}, \ref{thm:f/uf} and \ref{thm:smres}, we have
\[M_{(1-n)d_1+d_2,d_1,d_2}^{F_n,\text{fr}}\simeq M_{(1-n)d_1+d_2+1,d_1+1,d_2+n+1}^{F_n},\, \Omega_{M_{(1-n)d_1+d_2,d_1,d_2}^{F_n}}=P_{\PP^{d_1+d_2-1}}\Omega_{M_{(1-n)d_1+d_2-1,d_1,d_2}^{F_n}}.\]
After plugging them into equation \eqref{eqn:keyrl} and setting $z_{d_1,d_2}^{F_n}=\Omega_{M_{(1-n)d_1+d_2-1,d_1,d_2}^{F_n}}$, we may deduce that 
\[1+\sum_{(1-n)d_1+d_2\geq 0 \atop d_1+d_2>0}\frac{z_{d_1+1,d_2+n+1}^{F_n}}{\big((1-n)d_1+d_2\big)!}x_1^{d_1}x_2^{d_2}=G\exp\left(\sum_{(1-n)k_1+k_2>0}\frac{\big(P_{\PP^{d_1+d_2-1}}\big)^2z_{k_1,k_2}^{F_n}}{\big((1-n)k_1+k_2\big)!}x_1^{k_1}x_2^{k_2}\right)\]
where 
\[G=1+\sum_{(1-n)d_1+d_2= 0 \atop d_1+d_2>0}z_{d_1+1,d_2+n+1}^{F_n}x_1^{d_1}x_2^{d_2}.\]
After taking a derivative $(1-n)x_1\frac{\partial}{\partial x_1}+x_2\frac{\partial}{\partial x_2}$ on both sides and noting that
\[\left((1-n)x_1\frac{\partial}{\partial x_1}+x_2\frac{\partial}{\partial x_2}\right)G=0,\]
we may deduce that 
\[z_{d_1,d_2}^{F_n}=\sum_{k_1+k_1'=d_1\atop k_2+k_2'=d_2}z_{k_1,k_2}^{F_n}z_{k_1',k_2'}^{F_n}\big(P_{\PP^{k_1+k_2-1}}\big)^2{(1-n)d_1+d_2-3\choose (1-n)k_1+k_2-1}.\]
Here we have used the fact that $z_{1,n+1}^{F_n}=1$. This is exactly equation \eqref{eqn:qDTfm} for the pair $(F_n,C_n+f)$.
\end{proof}

\section{Numerical results}\label{sec:numdata}
In this section, we list the first few $F_{\beta}^{\cO_X(-D)}$ and $n_{g,\beta}^{\mathcal{O}_X(-D)}$
for the following five pairs $(X,D)$:
\[(\PP^2,\lne),\quad (\PP^2,\conic),\quad (F_0,C_0+f),\quad (F_1,C_1+f),\quad (F_2,C_2+f)\]
based on the recursion \eqref{eqn:recurfm}. The initial data can be determined from the quiver side (combining \eqref{eqn:defbps} and \eqref{eqn:bps}).

Case I: $(X,D)=(\PP^2,\lne)$. In this case, curve classes $\beta$ of $\PP^2$ are indexed by integers $d\in \N$. We only need to compute those $F_{\beta}^{\cO_X(-D)}$ such that $0<\Tg\cdot\beta=2d<3$, i.e., $d=1$. When $d=1$, $F_{d}^{\mathcal{O}_{\mathbb{P}^2}(-1)}=1$ can be deduced from \eqref{eqn:defbps}, \eqref{eqn:bps} and the fact that $M_{\beta}^{\cO_X(-D)}=M_{1,1}^L$ is a point. We have the following table for the generating series of Gromov--Witten invariants of $\mathcal{O}_{\mathbb{P}^2}(-1)$ up to degree $5$:


\begin{table}[htb]  
\begin{center}   
\caption{GW-invariants of $\mathcal{O}_{\mathbb{P}^2}(-1)$}
\begin{tabular}{|c|c|}  
\hline $d$ & $F_{d}^{\mathcal{O}_{\mathbb{P}^2}(-1)}$\\
\hline $1$ & \footnotesize $1$\\
\hline $2$ & \footnotesize $(q-1)^2/q$\\
\hline $3$ & \footnotesize $(q - 1)^4(q^2 + 5q + 1)/q^3$\\
\hline $4$ & \footnotesize $ (q - 1)^6(q^6 + 7q^5 + 29q^4 +
64q^3 + 29q^2 + 7q + 1)/q^6$\\
\hline $5$ & \tabincell{c}{\footnotesize $(q - 1)^8(q^{12} + 9q^{11} + 46q^{10} + 175q^9 +
506q^8 + 1138q^7$ \\ \footnotesize $ + 1727q^6 + 1138q^5 + 506q^4 + 175q^3 + 46q^2 + 9q + 1)/q^{10}$}\\
\hline
\end{tabular}
\end{center}   
\end{table}

The BPS invariants $n_{g,d}^{\mathcal{O}_{\mathbb{P}^2}(-1)}$ are determined from the following formula
\[F_{d}^{\mathcal{O}_{\mathbb{P}^2}(-1)}=\sum_{g\geq 0} n_{g,d}^{\mathcal{O}_{\mathbb{P}^2}(-1)}(2\sin(h/2))^{2g-2+2d}\,,\]
where $q=e^{\sqrt{-1}h}$.

\begin{table}[htb]
\begin{center}   
\caption{ BPS invariants for $\cO_{\PP^2}(-1)$ }
\begin{tabular}{|l|c|c|c|c|c|c|c|}  
\hline 
\normalsize\diagbox{$d$}{$g$} & $0$& $1$ & $2$ & $3$ & $4$ &  $5$ &  $6$\\
\hline $1$ & $1$& $0$ &  $0$ & $0$ & $0$ &$0$ & $0$\\
\hline $2$ & $-1$& $0$ &$0$ & $0$ & $0$ &$0$ & $0$ \\
\hline $3$ & $7$& $-1$ &$0$ & $0$ & $0$ &$0$ & $0$ \\
\hline $4$ & $-138$ & $66$ & $-13$ & $1$ & $0$ &$0$ & $0$ \\
\hline $5$ & $5477$& $-5734$ &$3031$ & $-970$ & $190$ &$-21$ & $1$ \\
\hline
\end{tabular}
\end{center}   
\end{table}

Case II: $(X,D)=(\PP^2,\conic)$. Curve classes $\beta$ are again indexed by integers $d\in\N$. In this case, we need to compute those $F_{d}^{\mathcal{O}_{\mathbb{P}^2}(-2)}$ such that $d=1,2$ which can be similarly determined from the quiver side. 
\begin{table}[htb]  
\begin{center}   
\caption{GW-invariants of $\mathcal{O}_{\mathbb{P}^2}(-2)$}
\begin{tabular}{|c|c|}  
\hline $d$ & $F_{d}^{\mathcal{O}_{\mathbb{P}^2}(-2)}$\\
\hline $1$ & \footnotesize $-(-q)^{1/2}/(q-1)$\\
\hline $2$ & \footnotesize $-1$\\
\hline $3$ & \footnotesize $(q-1)(q^2+2q+1)/(-q)^{3/2}$\\
\hline $4$ & \footnotesize $(q-1)^2(q^6+3q^5+7q^4+10q^3+7q^2+3q+1)/q^4$\\
\hline $5$ & \tabincell{c}{\footnotesize $-(q-1)^3(q^{12}+4q^{11}+11q^{10}+25q^9+46q^8+71q^7$\\ \footnotesize $+84q^6+71q^5+46q^4+25q^3+11q^2+4q+1)/(-q)^{15/2}$}\\
\hline
\end{tabular}
\end{center}   
\end{table}

The BPS invariants $n_{g,d}^{\mathcal{O}_{\mathbb{P}^2}(-2)}$ are determined from the following formula
\[F_{d}^{\mathcal{O}_{\mathbb{P}^2}(-2)}=\sum_{g\geq 0} n_{g,d}^{\mathcal{O}_{\mathbb{P}^2}(-2)}(2\sin(h/2))^{2g-2+d}\,.\]

\begin{table}[htb]  
\begin{center}   
\caption{BPS invariants for $\cO_{\PP^2}(-2)$}
\begin{tabular}{|l|c|c|c|c|c|c|c|}  
\hline 
\diagbox{$d$}{$g$} & $0$& $1$ &$2$ & $3$ & $4$ &$5$ & $6$ \\
\hline $1$ & $-1$& $0$ &$0$ & $0$ & $0$ &$0$ & $0$  \\
\hline $2$ & $-1$& $0$ &$0$ & $0$ & $0$ &$0$ & $0$  \\
\hline $3$ & $-4$& $1$ &$0$ & $0$ & $0$ &$0$ & $0$  \\
\hline $4$ & $-32$ & $28$ & $-9$ & $1$ & $0$ &$0$ & $0$ \\
\hline $5$ & $-400$& $792$ &$-721$ & $365$ & $-105$ &$16$ & $-1$ \\
\hline
\end{tabular}
\end{center}   
\end{table}

Case III: $(X,D)=(F_0,C_0+f)$. In this case, $F_0$ is just $\PP^1\times\PP^1$ and 
each curve class $\beta$ can be uniquely written as $d_1C_0+d_2f$ where $d_1,d_2\in\N$. We denote $F_{\beta}^{\cO_X(-D)}$  by $F_{d_1,d_2}^{\mathcal{O}_{\mathbb{P}^1\times\mathbb{P}^1}(-1,-1)}$. By symmetry, we have $F_{d_1,d_2}^{\mathcal{O}_{\mathbb{P}^1\times\mathbb{P}^1}(-1,-1)}=F_{d_2,d_1}^{\mathcal{O}_{\mathbb{P}^1\times\mathbb{P}^1}(-1,-1)}$, and so we only need to determine those pairs $(d_1,d_2)$ such that $d_2\geq d_1$. For the initial data, we need to determine those $F_{d_1,d_2}^{\mathcal{O}_{\mathbb{P}^1\times\mathbb{P}^1}(-1,-1)}$ such that $0<\Tg\cdot\beta=d_1+d_2<3$. All of these $F_{d_1,d_2}^{\mathcal{O}_{\mathbb{P}^1\times\mathbb{P}^1}(-1,-1)}$ can be determined from the quiver side as the corresponding quiver moduli space $M_{\beta}^{\cO_X(-D)}$ is either a point or empty. 
\begin{table}[htb]  
\begin{center}   
\caption{GW-invariants of $\mathcal{O}_{\mathbb{P}^1\times\mathbb{P}^1}(-1,-1)$}
\begin{tabular}{|c|c|}  
\hline $(d_1,d_2)$ & $F_{d_1,d_2}^{\mathcal{O}_{\mathbb{P}^1\times\mathbb{P}^1}(-1,-1)}$\\
\hline $(0,1)$ & \footnotesize  $(-q)^{1/2}/(q - 1)$\\
\hline $(1,1)$ & \footnotesize  $-1$\\
\hline $(2,2)$ & \footnotesize  $(q - 1)^2(q^2 + 4q + 1)/q^2$\\
\hline $(2,3)$ & \footnotesize  $(q - 1)^3(q^4 + 5q^3 + 12q^2 + 5q + 1)/(-q)^{7/2}$\\
\hline $(2,4)$ & \footnotesize  $ -(q - 1)^4(q^6 + 6q^5 + 17q^4 + 32q^3 + 17q^2 + 6q + 1)/q^5$\\
\hline $(3,3)$ & \footnotesize  $-(q - 1)^4(q^8 + 6q^7 + 23q^6 + 58q^5 + 94q^4 + 58q^3 + 23q^2 + 6q + 1)/q^6$\\
\hline $(3,4)$ & \tabincell{c}{\footnotesize  $(q - 1)^5(q^{12} + 7q^{11} + 30q^{10} + 93q^9 + 227q^8 + 429q^7$ \\ \footnotesize $ + 586q^6 + 429q^5 + 227q^4 + 93q^3 + 30q^2 + 7q + 1)/(-q)^{17/2}$}\\
\hline
\end{tabular}
\end{center}   
\end{table}

When $d_1=0$ and $d_2\geq 2$, we have $F_{d_1,d_2}^{\mathcal{O}_{\mathbb{P}^1\times\mathbb{P}^1}(-1,-1)}=0$. This can be deduced either from the quiver side, because the corresponding quiver moduli space is empty, or from the Gromov--Witten side, because in this case curves are mapped to fibers of $\mathbb{P}^1\times\mathbb{P}^1$ and so cannot pass through $d_2\geq 2$ points in general position. When $d_1=1$, one deduces from the recursion that $$F_{1,d_2}^{\mathcal{O}_{\mathbb{P}^1\times\mathbb{P}^1}(-1,-1)}=-\left(\frac{1-q}{(-q)^{1/2}}\right)^{d_2-1}\,.$$

The BPS invariants $n_{g,(d_1,d_2)}^{\mathcal{O}_{\mathbb{P}^1\times\mathbb{P}^1}(-1,-1)}$ are determined from the following formula
\[F_{d_1,d_2}^{\mathcal{O}_{\mathbb{P}^1\times\mathbb{P}^1}(-1,-1)}=\sum_{g \geq 0} n_{g,(d_1,d_2)}^{\mathcal{O}_{\mathbb{P}^1\times\mathbb{P}^1}(-1,-1)}(2\sin(h/2))^{2g-2+d_1+d_2}\,.\]

\begin{table}[htb]  
\begin{center}   
\caption{BPS invariants for $\cO_{\PP^1\times \PP^1}(-1,-1)$}
\begin{tabular}{|l|c|c|c|c|c|c|c|c|c|}  
\hline 
\diagbox{$(d_1,d_2)$}{$g$} & $0$& $1$ &$2$ & $3$ & $4$ & $5$& $6$ \\
\hline $(0,1)$ & $1$& $0$ &$0$ & $0$ & $0$ & $0$ & $0$ \\
\hline $(1,1)$ & $-1$ & $0$ & $0$ & $0$ & $0$ & $0$ & $0$ \\
\hline $(2,2)$ & $-6$ & $1$ & $0$ & $0$ & $0$ & $0$ & $0$\\
\hline $(2,3)$ & $24$ & $-9$ & $1$ & $0$ & $0$ & $0$ & $0$ \\
\hline $(2,4)$ & $-80$ & $50$ & $-12$ & $1$ & $0$ & $0$ & $0$ \\
\hline $(3,3)$ & $-270$ & $220$ & $-79$ & $14$ & $-1$ & $0$ & $0$ \\
\hline $(3,4)$ & $2160$ & $-2865$ & $1840$ & $-690$ & $154$ & $-19$ & $1$\\
\hline 
\end{tabular}
\end{center}   
\end{table}

Case IV: $(X,D)=(F_1,C_1+f)$. Each effective curve class $\beta$ can be written as $d_1C_{-1}+d_2f$ where $d_1,d_2\in \N$. So the generating series $F_{\beta}^{\cO_X(-D)}$ can be written as $F_{d_1,d_2}^{\mathcal{O}_{F_1}(-C_1-f)}$.
The corresponding quiver $M_{\beta}^{X/D}$ is
\begin{center}
\includegraphics[width=0.9\textwidth]{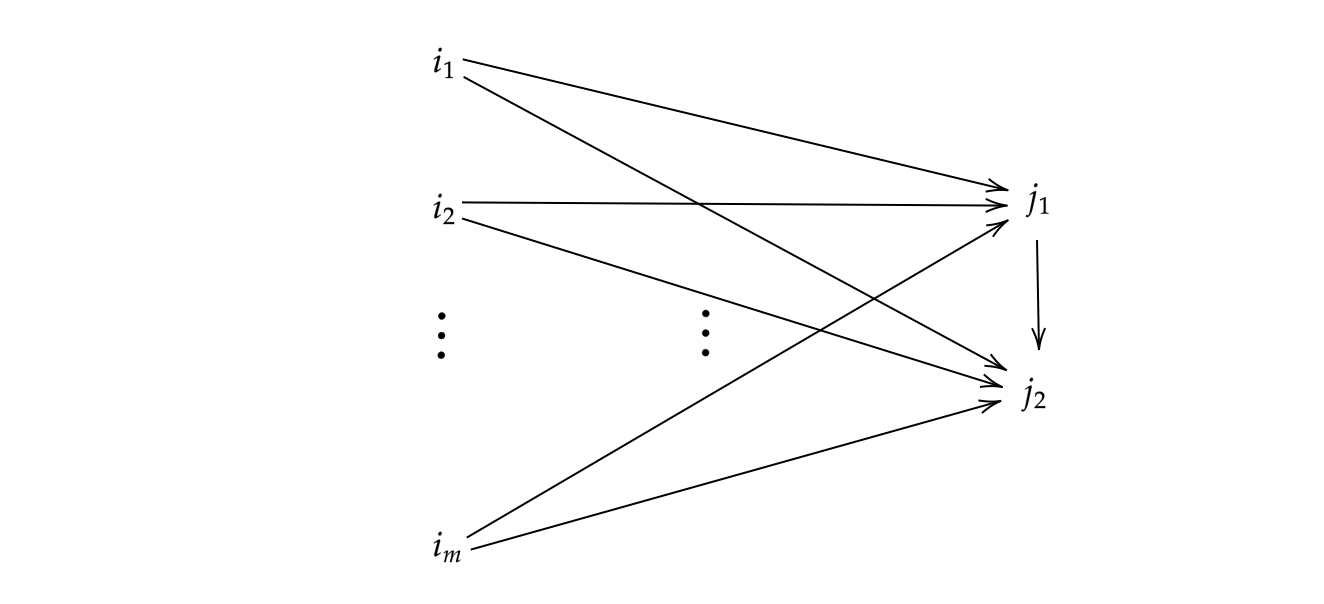}
\end{center}
If $\beta=d_1C_{-1}+d_2f$, then the number $m$ of those vertices $i_k$ on the left-hand side equals to $\Tg\cdot\beta=d_2$. The dimensions associated to vertices $j_1,j_2$ are $d_1,d_2$ respectively.
And the stability condition is
\[\theta=\sum_k i_k^*-j_2^*.\]
So if $d_1>d_2$, then for each quiver representation $(V_i,f_i)_{i\in Q_0}$, we could always find a proper subspace $W_{j_1}$ of $V_{j_1}$ which is the images from the $d_2$ one dimensional spaces $V_{i_k}$. Then we can naturally get a nonzero proper subrepresentation of $(V_i,f_i)_{i\in Q_0}$ which still has $\theta=0$ because $\theta(j_1)=0$. This implies that the moduli of $\theta$-stable representations is empty. Then DT-invariants associated to this quiver is just zero. Combining with Theorems \ref{thm:loc/rel} and \ref{thm:GW/Kr}, we may deduce that $F_{d_1,d_2}^{\mathcal{O}_{F_1}(-C_1-f)}=0$ if $d_1>d_2$. So we only need to determine those pairs $(d_1,d_2)$ such that $d_1\leq d_2$.

The initial data for $F_{d_1,d_2}^{\mathcal{O}_{F_1}(-C_1-f)}$ are
\[(d_2,d_2)=(0,1),\,(0,2),\,(1,1),\,(1,2),\,(2,2).\]
Note that the case $(0,2)$ can be ruled out similar to case III. For the rest pairs, all of those $F_{d_1,d_2}^{\mathcal{O}_{F_1}(-C_1-f)}$ can be determined from the quiver side since the corresponding quiver moduli  $M_{\beta}^{\cO_X(-D)}$ is always a point. We have the following table for $F_{d_1,d_2}^{\mathcal{O}_{F_1}(-C_1-f)}$:
\begin{table}[htb]  
\begin{center}   
\caption{GW-invariants of $\mathcal{O}_{F_1}(-C_1-f)$}
\begin{tabular}{|c|c|}  
\hline $(d_1,d_2)$ & $F_{d_1,d_2}^{\mathcal{O}_{F_1}(-C_1-f)}$\\
\hline $(0,1)$ & \footnotesize  $(-q)^{1/2}/(q - 1)$\\
\hline $(1,1)$ & \footnotesize  $-(-q)^{1/2}/(q - 1)$\\
\hline $(1,2)$ & \footnotesize  $1$\\
\hline $(2,2)$ & \footnotesize  $-1$\\
\hline $(2,3)$ & \footnotesize  $-(q-1)(q^2+3q+1)/(-q)^{3/2}$\\
\hline $(2,4)$ & \footnotesize  $ (q - 1)^2(q^4 + 4q^3 + 8q^2 + 4q + 1)/q^3$\\
\hline $(2,5)$ & \footnotesize $ -(q - 1)^3(q^6 + 5q^5 + 12q^4 + 20q^3 + 12q^2 + 5q + 1)/(-q)^{9/2}$\\
\hline $(3,3)$ & \footnotesize  $ (q-1)(q^2+2q+1)/(-q)^{3/2}$\\
\hline $(3,4)$ & \footnotesize $-(q - 1)^2(q^6 + 4q^5 + 11q^4 + 17q^3 + 11q^2 + 4q + 1)/q^4$    \\
\hline $(4,4)$ & \footnotesize $(q - 1)^2(q^6 + 3q^5 + 7q^4 + 10q^3 + 7q^2 + 3q + 1)/q^4$ \\
\hline $(4,5)$ & \tabincell{c}{\footnotesize $(q - 1)^3(q^{12} + 5q^{11} + 16q^{10} + 41q^9 + 82q^8 + 136q^7$\\  \footnotesize $+ 167q^6 + 136q^5 + 82q^4 + 41q^3 + 16q^2 + 5q + 1)/(-q)^{15/2}$ }\\
\hline $(5,5)$ & \tabincell{c}{\footnotesize $(q - 1)^3(q^{12} + 4q^{11} + 11q^{10} + 25q^9 + 46q^8 + 71q^7$ \\ \footnotesize $ + 84q^6 + 71q^5 + 46q^4 + 25q^3 + 11q^2 + 4q + 1)/(-q)^{15/2}
 $}\\
\hline
\end{tabular}
\end{center}   
\end{table}

The BPS invariants $n_{g,(d_1,d_2)}^{\mathcal{O}_{F_1}(-C_1-f)}$ can be determined from the following formula
\[F_{d_1,d_2}^{\mathcal{O}_{F_1}(-C_1-f)}=\sum_{g\geq 0} n_{g,(d_1,d_2)}^{\mathcal{O}_{F_1}(-C_1-f)}(2\sin(h/2))^{2g-2+d_2}.\]

\begin{table}[htb]  
\begin{center}   
\caption{BPS invariants for $\mathcal{O}_{F_1}(-C_1-f)$}
\begin{tabular}{|l|c|c|c|c|c|c|c|}  
\hline 
\diagbox{$(d_1,d_2)$}{$g$} & $0$& $1$ &$2$ & $3$ & $4$ & $5$& $6$  \\
\hline $(0,1)$ & $1$& $0$ &$0$ & $0$ & $0$ & $0$ & $0$ \\
\hline $(1,1)$ & $-1$ & $0$ & $0$ & $0$ & $0$ & $0$ & $0$\\
\hline $(1,2)$ & $1$ & $0$ & $0$ & $0$ & $0$ & $0$ & $0$ \\
\hline $(2,2)$ & $-1$ & $0$ & $0$ & $0$ & $0$ & $0$ & $0$ \\
\hline $(2,3)$ & $5$ & $-1$ & $0$ & $0$ & $0$ & $0$ & $0$ \\
\hline $(2,4)$ & $-18$ & $8$ & $-1$ & $0$ & $0$ & $0$ & $0$\\
\hline $(2,5)$ & $56$ & $-41$ & $11$ & $-1$ & $0$ & $0$ & $0$\\
\hline $(3,3)$ & $-4$ & $1$ & $0$ & $0$ & $0$ & $0$ & $0$ \\
\hline $(3,4)$ & $49$ & $-36$ & $10$ & $-1$ & $0$ & $0$ & $0$ \\
\hline $(3,5)$ & $-384$ & $499$ & $-293$ & $92$ & $-15$ & $1$ & $0$\\
\hline $(4,4)$ & $-32$ & $28$ & $-9$ & $1$ & $0$ & $0$ & $0$ \\
\hline $(4,5)$ & $729$ & $-1250$ & $1003$ & $-456$ & $120$ & $-17$ & $1$\\
\hline $(5,5)$ & $-400$ & $792$ & $-721$ & $365$ & $-105$ & $16$ & $-1$\\
\hline 
\end{tabular}
\end{center}   
\end{table}
\newpage

Case V: $(X,D)=(F_2,C_2+f)$. Still each curve class $\beta$ can be written as $d_1C_{-2}+d_2f$. And the corresponding $F_{\beta}^{\cO_X(-D)}$ can be written as $F_{d_1,d_2}^{\mathcal{O}_{F_2}(-C_2-f)}$. We 
only consider those $\beta$ such that $\Tg\cdot\beta=d_2-d_1>0$. The initial data in this case are those $F_{d_1,d_2}^{\mathcal{O}_{F_2}(-C_2-f)}$ such that $0<d_2-d_1<3$. If $d_2-d_1=1$, then $M_{\beta}^{\cO_X(-D)}$ is the moduli associated to following 2-Kronecker quiver:
\begin{center}
\includegraphics[width=0.9\textwidth]{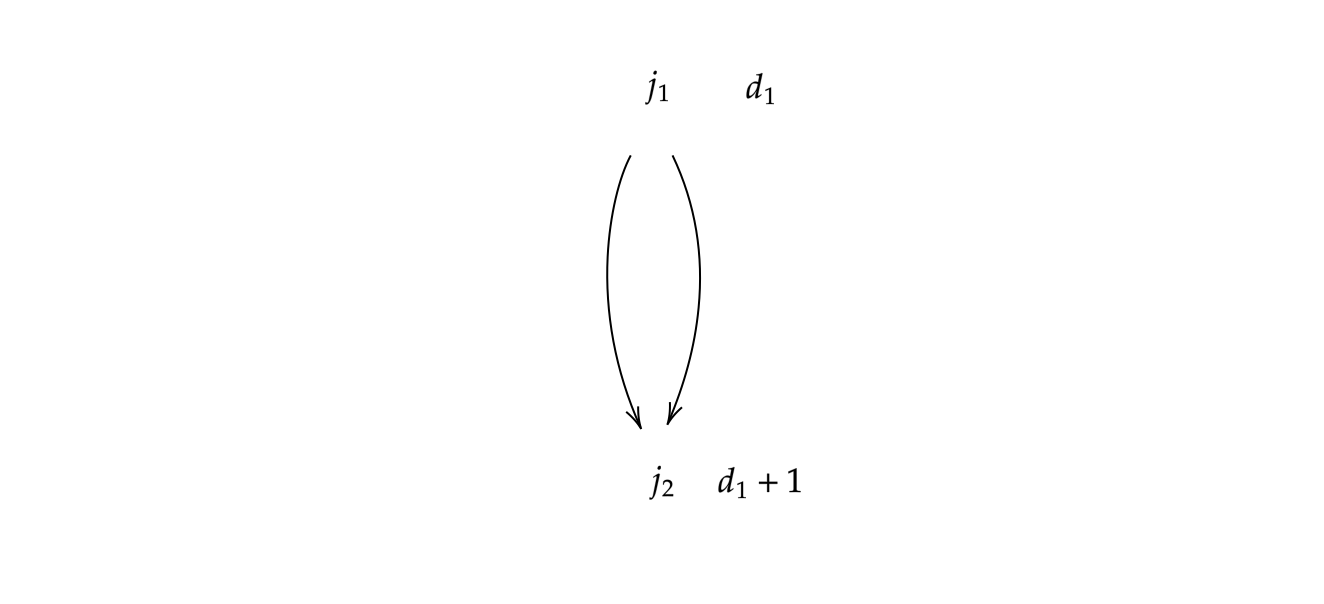}
\end{center}
where vertices $j_1$ and $j_2$ are decorated with dimensions $d_1$, $d_1+1$ respectively. This is known to be a point. Combining \eqref{eqn:defbps} and \eqref{eqn:bps}, we always have 
\[F_{d,d+1}^{\mathcal{O}_{F_2}(-C_2-f)}=(-q)^{1/2}/(q - 1).\]
If $d_2-d_1=2$, then $M_{\beta}^{\cO_X(-D)}$ is the moduli associated to following quiver:
\begin{center}
\includegraphics[width=0.9\textwidth]{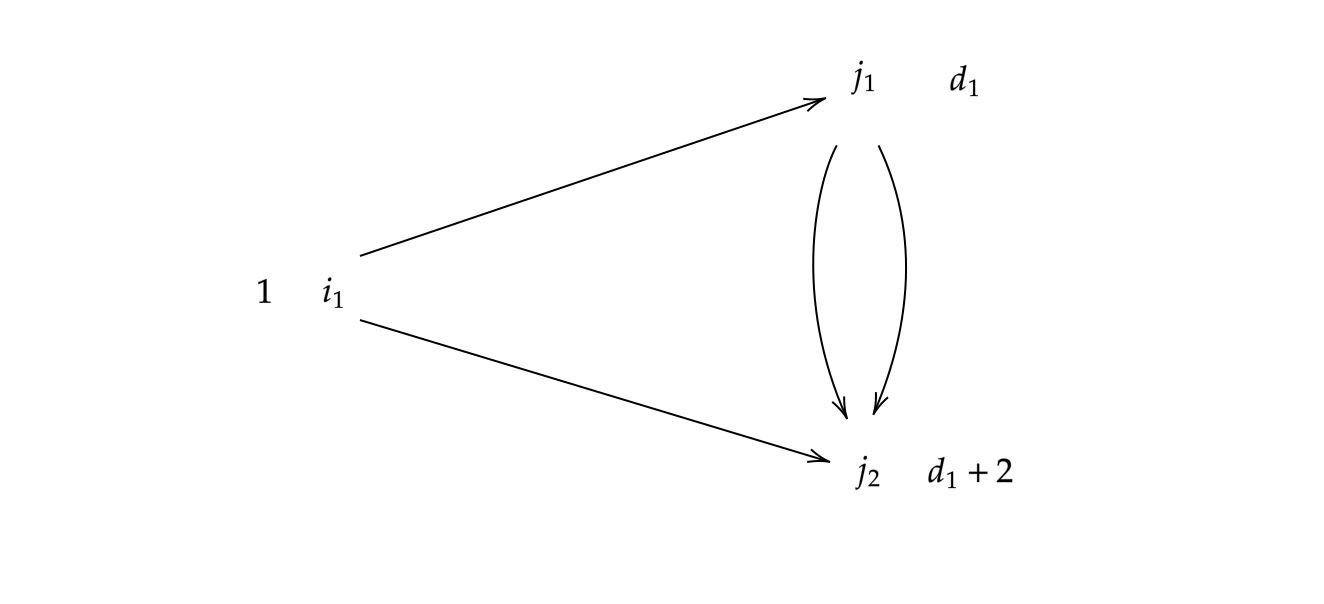}
\end{center}
The vertices $i_1$, $j_1$, $j_2$ are decorated with dimensions $1$, $d_1$, $d_1+2$ respectively. We can assume that $d_1>0$. Otherwise, $M_{\beta}^{\cO_X(-D)}$ is empty. Then by Theorem \ref{thm:dual}, $M_{\beta}^{\cO_X(-D)}$ is isomorphic to the moduli associated to the following quiver:
\begin{center}
\includegraphics[width=0.9\textwidth]{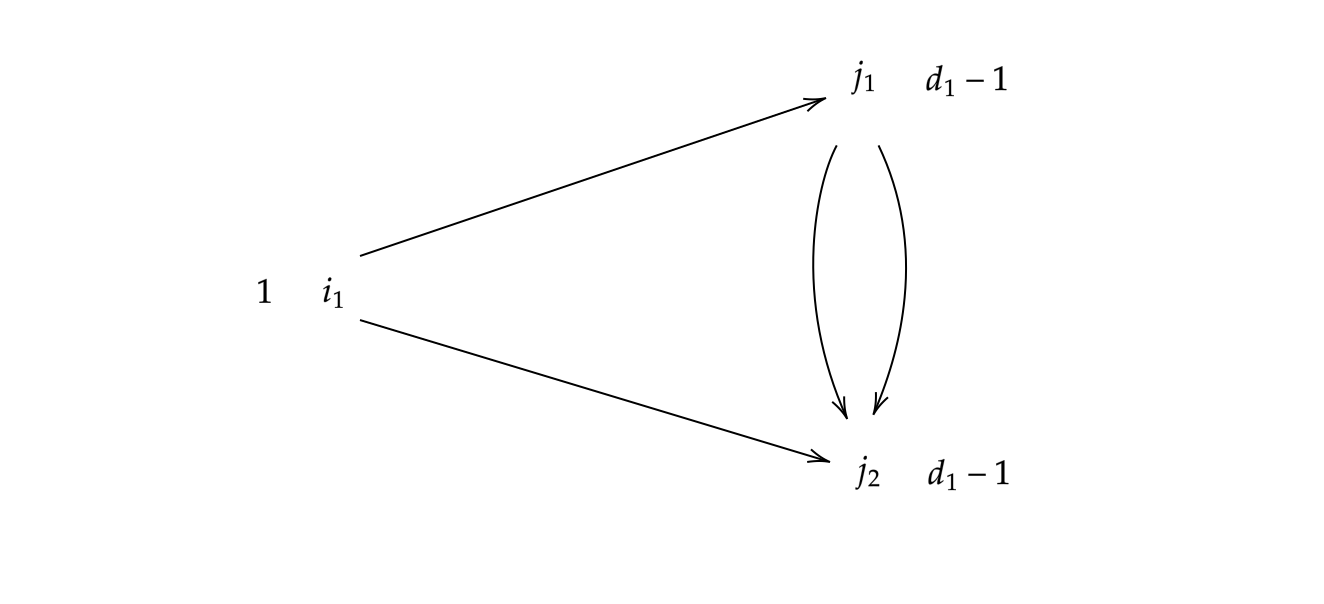}
\end{center}
The latter is a framed quiver moduli by Theorem \ref{thm:f/uf}. So the quiver DT invariants $\Omega_{M_{\beta}^{\cO_X(-D)}}(q)$ can be computed via \eqref{eqn:keyrl} above. More precisely, we take the quiver $Q$ to be the 2-Kronecker quiver with vertices $j_1$, $j_2$ and set $\underline{n}=e_{j_1}+e_{j_2}$ and $\theta=e_{j_1}^*-e_{j_2}^*$ in \eqref{eqn:keyrl}. Then $\Omega_{M_{\beta}^{\cO_X(-D)}}(q)$ in this case is simply the coefficient of $x^{\underline{d}}$ with $\underline{d}=(d_1-1)\underline{n}$. Combining with the facts that $M_{e_{j_1}+e_{j_2}}^{\theta-sst}=\PP^1$ and
$M_{d(e_{j_1}+e_{j_2})}^{\theta-st}=\emptyset$ for $d>1$, we may deduce that
\[\Omega_{M_{\beta}^{\cO_X(-D)}}(q)=\left[\frac{1}{(1-qx)(1-q^{-1}x)(1-2x)}\right]_{x^{d_1-1}}\]
where $[\cdot]_{x^{d_1-1}}$ extracts the coefficient of $x^{d_1-1}$. We have now computed $\Omega_{M_{\beta}^{\cO_X(-D)}}(q)$ for those $\beta$ such that $0<\Tg\cdot\beta<3$. Combining \eqref{eqn:defbps} and \eqref{eqn:bps}, we can determine all the initial $F_{\beta}^{\cO_X(-D)}$. Here is the table for $F_{d_1,d_2}^{\mathcal{O}_{F_2}(-C_2-f)}$:
\begin{table}[htb]  
\begin{center}   
\caption{GW-invariants of $\mathcal{O}_{F_2}(-C_2-f)$}
\begin{tabular}{|c|c|}  
\hline $(d_1,d_2)$ & $F_{d_1,d_2}^{\mathcal{O}_{F_2}(-C_2-f)}$\\
\hline $(d,d+1)$ & \footnotesize  $(-q)^{1/2}/(q - 1)$\\
\hline $(1,3)$ & \footnotesize  $-1$\\
\hline $(2,4)$ & \footnotesize  $-(q^2 + 2q + 1)/q$\\
\hline $(2,5)$ & \footnotesize  $(q-1)(q^4 + 3q^3 + 5q^2 + 3q + 1)/(-q)^{5/2}$\\
\hline $(2,6)$ & \footnotesize $(q-1)^2(q^6 + 4q^5 + 8q^4 + 12q^3 + 8q^2 + 4q + 1)/q^4$\\
\hline $(2,7)$ & \footnotesize  $(q - 1)^3(q^8 + 5q^7 + 12q^6 + 20q^5 + 28q^4 + 20q^3 + 12q^2 + 5q + 1)/(-q)^{11/2}$\\
\hline $(3,5)$ & \footnotesize $-(q^4 + 2q^3 + 5q^2 + 2q + 1)/q^2$    \\
\hline $(3,6)$ & \footnotesize $(q - 1)(q^8 + 3q^7 + 8q^6 + 14q^5 + 20q^4 + 14q^3 + 8q^2 + 3q + 1)/(-q)^{9/2}$ \\
\hline $(3,7)$ & \tabincell{c}{\footnotesize  $(q - 1)^2(q^{12} + 4q^{11} + 12q^{10} + 26q^9 + 49q^8 + 74q^7$\\ \footnotesize $ + 90q^6 + 74q^5 + 49q^4 + 26q^3 + 12q^2 + 4q + 1)/q^7$}\\
\hline $(4,6)$ & \footnotesize $ -(q^6 + 2q^5 + 5q^4 + 10q^3 + 5q^2 + 2q + 1)/q^3
 $\\
\hline $(4,7)$ & \tabincell{c}{\footnotesize  $-(q-1)(q^{12} + 3q^{11} + 8q^{10} + 17q^9 + 31q^8 + 47q^7$ \\ \footnotesize  $ + 60q^6 + 47q^5 + 31q^4 + 17q^3 + 8q^2 + 3q + 1)/(-q)^{13/2}$}\\
\hline $(5,7)$ & \footnotesize  $-(q^8 + 2q^7 + 5q^6 + 10q^5 + 21q^4 + 10q^3 + 5q^2 + 2q + 1)/q^4 $\\
\hline
\end{tabular}
\end{center}   
\end{table}

The BPS invariants $n_{g,(d_1,d_2)}^{\mathcal{O}_{F_2}(-C_2-f)}$ can be determined from the following formula
\[F_{d_1,d_2}^{\mathcal{O}_{F_2}(-C_2-f)}=\sum_{g\geq 0} n_{g,(d_1,d_2)}^{\mathcal{O}_{F_2}(-C_2-f)}(2\sin(h/2))^{2g-2+d_2-d_1}.\]

\begin{table}[htb]  
\begin{center}   
\caption{BPS invariants for $\mathcal{O}_{F_2}(-C_2-f)$}
\begin{tabular}{|l|c|c|c|c|c|c|c|}  
\hline 
\diagbox{$(d_1,d_2)$}{$g$} & $0$& $1$ &$2$ & $3$ & $4$ & $5$ & $6$  \\
\hline $(d,d+1)$ & $1$& $0$ &$0$ & $0$ & $0$ & $0$ & $0$  \\
\hline $(1,3)$ & $-1$ & $0$ & $0$ & $0$ & $0$ & $0$ & $0$  \\
\hline $(2,4)$ & $-4$ & $1$ & $0$ & $0$ & $0$ & $0$ & $0$   \\
\hline $(2,5)$ & $13$ & $-7$ & $1$ & $0$ & $0$ & $0$ & $0$  \\
\hline $(2,6)$ & $-38$ & $33$ & $-10$ & $1$ & $0$ & $0$ & $0$  \\
\hline $(2,7)$ & $104$ & $-129$ & $62$ & $-13$ & $1$ & $0$ & $0$  \\
\hline $(3,5)$ & $-11$ & $6$ & $-1$ & $0$ & $0$ & $0$ & $0$   \\
\hline $(3,6)$ & $72$ & $-89$ & $46$ & $-11$ & $1$ & $0$ & $0$ \\
\hline $(3,7)$ & $-422$ & $832$ & $-750$ & $374$ & $-106$ & $16$ & $-1$  \\
\hline $(4,6)$ & $-26$ & $22$ & $-8$ & $1$ & $0$ & $0$ & $0$  \\
\hline $(4,7)$ & $274$ & $-563$ & $548$ & $-298$ & $92$ & $-15$ & $1$  \\
\hline $(5,7)$ & $-57$ & $64$ & $-37$ & $10$ & $-1$ & $0$ & $0$  \\
\hline 
\end{tabular}
\end{center}   
\end{table}

In general, in order to determine those Gromov--Witten invariants of $\mathcal{O}_{F_n}(-C_n-f)$ with $n>2$, we need to determine all the quiver DT invariants for the following two types of quivers:
\begin{center}
\includegraphics[width=0.9\textwidth]{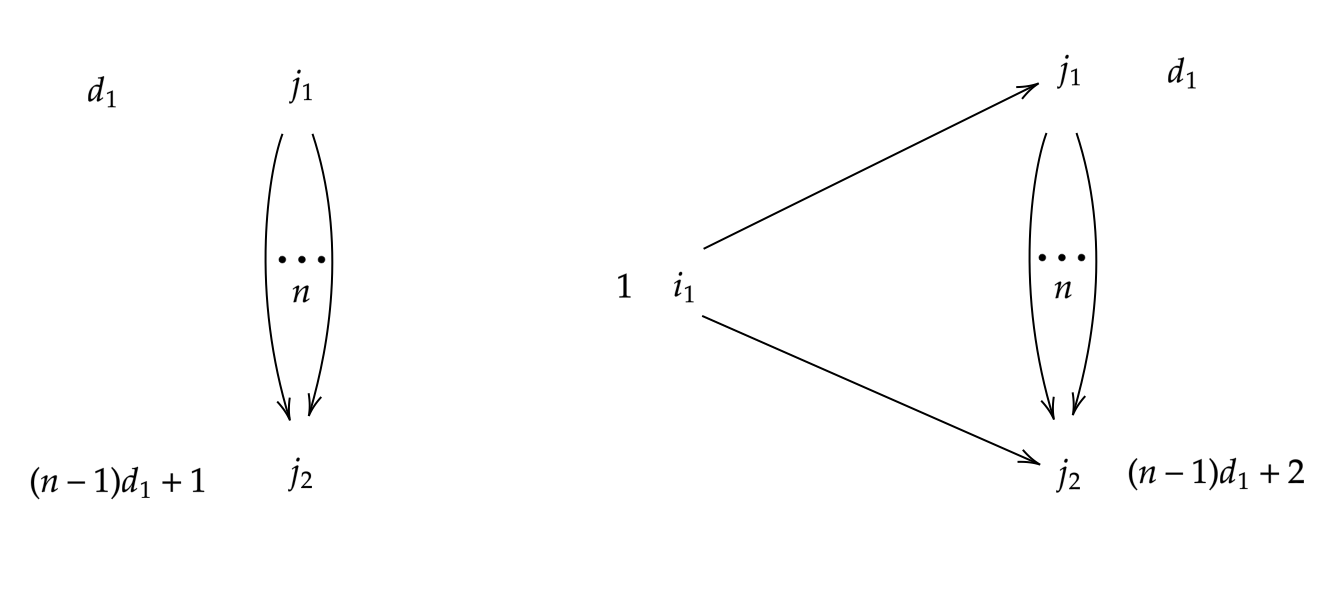}
\end{center}
where there are $n$ arrows between vertices $j_1$ and $j_2$ in both quivers and dimensions associated to different vertices are indicated in the picture. As far as we know, the quiver DT invariants for $n$-Kronecker quiver with $n>2$ is quite hard to compute. We do not know how to completely solve the quiver DT invariants for the above two types of quivers.

\appendix
\section{A Counterexample}\label{sec:ex}
In this appendix, we give a counterexample to the genus-zero recursion \eqref{eqn:g0loc} when $D$ is only assumed to be nef. This shows in particular that  Theorem \ref{thm:main} may fail once we relax the ample condition of $D$ to be nef. 

Let $X$ be the Hirzebruch surface $F_1=\mathbb{P}(\mathcal{O}(1)\oplus \mathcal{O})$ and $D$ be a fiber of $F_1$. We have the following recursion if $\Tg\cdot\beta\geq 3$:
\begin{equation*}
N_{0,\beta}^{\cO_X(-D)}=-\sum_{\beta_1+\beta_2=\beta\atop \beta_1,\beta_2>0} N_{0,\beta_1}^{\cO_X(-D)}N_{0,\beta_2}^{\cO_X(-D)}\left(D\cdot\beta_1\right)^2{\Tg\cdot\beta-3\choose \Tg\cdot\beta_1-1}+(D\cdot\beta)^2N_{0,\beta-f}^{\cO_X(-D)}
\end{equation*}
where $f$ is the fiber class and $\beta$ is not a multiple of the fiber class $f$. The above recursion can be deduced similarly as in Section \ref{sec:recg0}. First, using WDVV equation, we can deduce the following recursion for the genus-zero relative invariants of the above pair $(X,D)$ as in \cite{FW}:
\begin{equation}\label{eqn:relrecur}
\frac{N_{0,\beta}^{X/D}}{D\cdot\beta}=\sum_{\beta_1+\beta_2=\beta\atop \beta_1,\beta_2>0} \frac{N_{0,\beta_1}^{X/D}}{D\cdot\beta_1}\frac{N_{0,\beta_2}^{X/D}}{D\cdot\beta_2}\left(D\cdot\beta_1\right)^2{\Tg\cdot\beta-3\choose \Tg\cdot\beta_1-1}+(D\cdot\beta)N_{0,\beta-f}^{X/D}
\end{equation}
where $\Tg\cdot\beta\geq 3$. The appearance of the additional term $(D\cdot\beta)N_{0,\beta-f}^{X/D}$ can be illustrated via the following example.

Let $C_1$ be the section of $F_1$ with self intersection number $1$. Then the above recursion implies that 
\begin{equation}\label{eqn:illexample}
N_{0,C_1+nf}^{X/D}=N_{0,C_1}^{X/D},\,\forall n>0.
\end{equation}
Actually, in this case the contact order equals to $D\cdot (C_1+nf)=1$, and so the relative invariant $N_{0,C_1+nf}^{X/D}$ equals to the corresponding absolute invariant $N_{0,C_1+nf}^X$ which counts rational curves in $X$ passing through $2n+2$ generic points. By \cite[Example 8.1]{Ga2}, we know that $N_{0,C_1+nf}^X=1$ for all $n\geq 0$. This confirms the identity \eqref{eqn:illexample}.

In order to deduce the recursion for $N_{0,\beta}^{\cO_X(-D)}$, we again need the following local/relative correspondence
\begin{equation}\label{eqn:loc/rel2}
N_{0,\beta}^{X/D}=(-1)^{D\cdot\beta-1}(D\cdot\beta)N_{0,\beta}^{\cO_X(-D)}.
\end{equation}
According to \cite{GGR}, the above equality holds with the extra condition that 
$D\cdot\beta>0$. But it is easy to compute that $N_{0,\beta}^{X/D}=0$ if $\beta$ is some multiple of the fiber class $f$. So in the recursion \eqref{eqn:relrecur}, we can always assume that $D\cdot\beta_i>0$, $i=1,2$. The recursion for $N_{0,\beta}^{\cO_X(-D)}$ then follows from \eqref{eqn:relrecur} and \eqref{eqn:loc/rel2}.

\section{Genus $0$ recursion}\label{sec:recg0} 
In this section, we give a direct proof in Gromov--Witten theory of the recursion formula for genus-zero local invariants $N_{0,\beta}^{\cO_X(-D)}$:
\begin{equation}\label{eqn:g0loc}
N_{0,\beta}^{\cO_X(-D)}=-\sum_{\beta_1+\beta_2=\beta\atop \beta_1,\beta_2>0} N_{0,\beta_1}^{\cO_X(-D)}N_{0,\beta_2}^{\cO_X(-D)}\left(D\cdot\beta_1\right)^2{\Tg\cdot\beta-3\choose \Tg\cdot\beta_1-1}.
\end{equation}

Let $N_{0,\beta}^{X/D}$ be the (virtual) number of rational curves in $X$ with curve class $\beta$ and passing through $\Tg\cdot\beta$ general points and meet $D$ at an unspecified point with contact order $D\cdot\beta$. According to \cite[Theorem 1.1]{FW}, we have the following recursion formula when $\Tg\cdot\beta\geq 3$:
\begin{equation*}
    \begin{split}
        (H\cdot D)\frac{N_{0,\beta}^{X/D}}{d}=& 
        \sum\limits_{\beta_1+\beta_2=\beta\atop \beta_1,\beta_2>0} \left(d_1^2(H\cdot\beta_2){\Tg\cdot\beta-3\choose \Tg\cdot\beta_1 -1}+d_1d_2(H\cdot\beta_2){\Tg\cdot\beta-3\choose \Tg\cdot\beta_1-2} \right.\\
        &-\left. d_1^2(H\cdot\beta_1){\Tg\cdot\beta-3 \choose \Tg\cdot\beta_1} - d_1d_2(H\cdot \beta_1){\Tg\cdot\beta-3 \choose \Tg\cdot\beta_1-1}\right)\frac{N_{0,\beta_1}^{X/D}}{d_1}\frac{N_{0,\beta_2}^{X/D}}{d_2}
    \end{split}
\end{equation*}
where $H$ is any divisor, $d=D\cdot\beta$, $d_1=D\cdot\beta_1$, $d_2=D\cdot\beta_2$. After replacing $H$ by $\Tg$, the above recursion formula can be simplified as 
\begin{equation}\label{eqn:relg0}
(\Tg\cdot D)\frac{N_{0,\beta}^{X/D}}{d}=2\sum\limits_{\beta_1+\beta_2=\beta\atop \beta_1,\beta_2>0}d_1^2{\Tg\cdot\beta-3\choose \Tg\cdot\beta_1-1}\frac{N_{0,\beta_1}^{X/D}}{d_1}\frac{N_{0,\beta_2}^{X/D}}{d_2}\,.
\end{equation}
This follows from the combinatorial identities
\[(\Tg\cdot\beta_2){\Tg\cdot\beta-3\choose \Tg\cdot\beta_1 -1}-(\Tg\cdot\beta_1){\Tg\cdot\beta-3 \choose \Tg\cdot\beta_1}=2{\Tg\cdot\beta-3\choose \Tg\cdot\beta_1-1}\]
and 
\[\sum\limits_{\beta_1+\beta_2=\beta\atop \beta_1,\beta_2>0}\left((\Tg\cdot\beta_2){\Tg\cdot\beta-3\choose \Tg\cdot\beta_1-2}-(\Tg\cdot\beta_1){\Tg\cdot\beta-3 \choose \Tg\cdot\beta_1-1}\right)N_{0,\beta_1}^{X/D}N_{0,\beta_2}^{X/D}=0. \]
Since $D$ is smooth and rational, we have $\Tg\cdot D=-(K_X+D)\cdot D=2$ by the adjunction formula. So \eqref{eqn:relg0} can be further simplified as
\begin{equation*}
\frac{N_{0,\beta}^{X/D}}{D\cdot\beta}=\sum\limits_{\beta_1+\beta_2=\beta\atop \beta_1,\beta_2>0}(D\cdot\beta_1)^2{\Tg\cdot\beta-3\choose \Tg\cdot\beta_1-1}\frac{N_{0,\beta_1}^{X/D}}{D\cdot\beta_1}\frac{N_{0,\beta_2}^{X/D}}{D\cdot\beta_2}\,.
\end{equation*}
Now \eqref{eqn:g0loc} follows from the local/relative correspondence \cite{GGR}:
\[N_{0,\beta}^{X/D}=(-1)^{D\cdot\beta-1}(D\cdot\beta)N_{0,\beta}^{\cO_X(-D)}.\]
From the proof, it is clear that the requirement of $D$ to be rational, i.e., $\Tg\cdot D=2$ is necessary.

\section{Comparison with genus-one recursion from the Virasoro constraints} \label{app:virasoro}
From the recursion \eqref{eqn:recurfm}, we obtain the following recursion for genus one Gromov--Witten invariants of $\mathcal{O}_{\mathbb{P}^2}(-1)$:
\[\begin{aligned}
N_{1,d}=\sum_{d_1+d_2=d\atop d_1,d_2>0}\left(N_{0,d_1}N_{0,d_2}\frac{d_1^4}{12}-(N_{0,d_1}N_{1,d_2}+N_{0,d_2}N_{1,d_1})d_1^2\right){2d-3\choose 2d_1-1}\,.
\end{aligned}\]

We may also embed $\mathcal{O}_{\mathbb{P}^2}(-1)$ into $\mathbb{P}(\mathcal{O}_{\mathbb{P}^2}(-1)\oplus \mathcal{O}_{\mathbb{P}^2})$. Then Gromov--Witten invariants of $\mathcal{O}_{\mathbb{P}^2}(-1)$ equal to the corresponding invariants of $\mathbb{P}(\mathcal{O}_{\mathbb{P}^2}(-1)\oplus \mathcal{O}_{\mathbb{P}^2})$. Since $\mathbb{P}(\mathcal{O}_{\mathbb{P}^2}(-1)\oplus \mathcal{O}_{\mathbb{P}^2})$ is a projective toric variety, we can apply the Virasoro constraints \cite{EHX, EJX, Giv5, viraosorotoric, T12},  and get another recursion:
\[N_{1,d}=-\frac{d(d-1)}{24}N_{0,d}-\sum_{d_1+d_2=d\atop d_1,d_2>0}\frac{(d-1)(2d-1)}{2}{2d-3\choose 2d_1-2}N_{0,d_1}N_{1,d_2}\,.\]
By Theorem \ref{thm:main} and the Virasoro constraints, these two recursions are equivalent.
Using computer, we checked directly that up to degree 19, it is indeed the case. However, we do not know a direct elementary proof of the equivalence of these two recursions.

\bibliography{universal-BIB}

@article {APflow,
    AUTHOR = {Alexandrov, S. and Pioline, B.},
     TITLE = {Attractor flow trees, {BPS} indices and quivers},
   JOURNAL = {Adv. Theor. Math. Phys.},
  FJOURNAL = {Advances in Theoretical and Mathematical Physics},
    VOLUME = {23},
      YEAR = {2019},
    NUMBER = {3},
     PAGES = {627--699},
      ISSN = {1095-0761}
}

@article{MPattr,
  title={Attractor invariants, brane tilings and crystals},
  author={Mozgovoy, S. and Pioline, B.},
  journal={arXiv preprint arXiv:2012.14358},
  year={2020}
}

@article {ABflow,
    AUTHOR = {Arg\"{u}z, H. and Bousseau, P.},
     TITLE = {The flow tree formula for {D}onaldson-{T}homas invariants of
              quivers with potentials},
   JOURNAL = {Compos. Math.},
  FJOURNAL = {Compositio Mathematica},
    VOLUME = {158},
      YEAR = {2022},
    NUMBER = {12},
     PAGES = {2206--2249}
}

@article{EHX,
author = {{Eguchi}, T. and {Hori}, K. and {Xiong}, C.-S.},
title = {{Quantum cohomology and Virasoro algebra}},
journal = {Phys. Lett. B},
volume = {402},
year = {1997},
numbers = {1--2},
pages = {71--80}
}

@article{EJX,
author =  {{Eguchi}, T. and {Jinzenji}, M. and {Xiong}, C.-S.},
title = {{Quantum cohomology and free field representations}},
journal = {Nuclear Phys. B},
volume = {510},
year = {1998},
pages = {608--622}
}

@article {GGR,
    author = {{Garrel}, M. van  and {Graber}, T. and {Ruddat}, H.},
     title = {Local {G}romov-{W}itten invariants are log invariants},
   journal = {Adv. Math.},
  fjournal = {Advances in Mathematics},
    volume = {350},
      year = {2019},
     pages = {860--876},
}

@article {Ga2,
    AUTHOR = {Gathmann, A.},
     TITLE = {Gromov-{W}itten invariants of blow-ups},
   JOURNAL = {J. Algebraic Geom.},
  FJOURNAL = {Journal of Algebraic Geometry},
    VOLUME = {10},
      YEAR = {2001},
    NUMBER = {3},
     PAGES = {399--432}
}

@article{B,
	Author = {{Bernardara}, M.},
	Date-Added = {2017-07-21 22:39:49 +0000},
	Date-Modified = {2017-07-22 14:32:47 +0000},
	Journal = {Math. Nachr.},
	Number = {10},
	Pages = {1406--1413},
	Title = {{A semiorthogonal decomposition for Brauer Severi schemes}},
	Volume = {282},
	Year = {2009}}

@article {LhoP,
    AUTHOR = {{Lho}, H. and {Pandharipande}, R.},
     TITLE = {Stable quotients and the holomorphic anomaly equation},
   JOURNAL = {Adv. Math.},
  FJOURNAL = {Advances in Mathematics},
    VOLUME = {332},
      YEAR = {2018},
     PAGES = {349--402}
}

@article{M,
	Author = {{Manolache}, C.},
	Date-Added = {2017-01-30 03:28:08 +0000},
	Date-Modified = {2017-01-30 03:28:40 +0000},
	Journal = {J. Algebraic Geom.},
	Pages = {201--245},
	Title = {Virtual pull-backs},
	Volume = {21},
	Year = {2012}}

@article{L,
	Author = {{Lai},H. H.},
	Date-Added = {2017-01-28 17:39:43 +0000},
	Date-Modified = {2017-01-28 17:42:10 +0000},
	Journal = {Geom. Topol.},
	Number = {1},
	Pages = {1--48},
	Title = {Gromov--Witten invariants of blow-ups along submanifolds with convex normal bundles},
	Volume = {13},
	Year = {2007}}

@article{FP,
	Author = {{Faber}, C. and {Pandharipande}, R.},
	Date-Added = {2017-01-05 12:13:24 +0000},
	Date-Modified = {2017-01-28 17:46:26 +0000},
	Journal = {Invent. Math.},
	Number = {1},
	Pages = {173--199},
	Title = {{Hodge integrals and Gromov--Witten theory}},
	Volume = {139},
	Year = {2000}}

@article {viraosorotoric,
    AUTHOR = {Iritani, H.},
     TITLE = {Convergence of quantum cohomology by quantum {L}efschetz},
   JOURNAL = {J. Reine Angew. Math.},
  FJOURNAL = {Journal f\"{u}r die Reine und Angewandte Mathematik. [Crelle's
              Journal]},
    VOLUME = {610},
      YEAR = {2007},
     PAGES = {29--69},
      ISSN = {0075-4102}
}

@article{Giv5,
	Author = {A. Givental},
	Journal = {Moscow Mathematical Journal},
	Pages = {551--568},
	Number = {5},
	Title = {{Gromov--Witten invariants and quantization of quadratic hamiltonians}},
	Volume = {1},
	Year = {2001}}

@article{A,
	Author = {M. Atiyah},
	Date-Added = {2016-01-02 02:21:36 +0000},
	Date-Modified = {2017-01-27 16:54:40 +0000},
	Note = {{\tt arXiv:math/0012213}},
	Rating = {5},
	Read = {0},
	Title = {K-Theory Past and Present},
	Year = {2000}}

@article{P,
	Author = {{Payne}, S.},
	Date-Modified = {2016-12-03 20:53:21 +0000},
	Journal = {Compos. Math.},
	Number = {5},
	Pages = {1199--1213},
	Title = {{Moduli of toric vector bundles}},
	Volume = {144},
	Year = 2008}

@article {FRZZ,
    AUTHOR = {Fang, B. and Ruan, Y. and Zhang, Y. and Zhou,
              J.},
     TITLE = {Open {G}romov-{W}itten theory of {$K_{\mathbb{P}^2}$}, {$K_{\mathbb{P}^1\times\mathbb{ P^1}}$}, {$K_{W\mathbb{P}[1,1,2]}$}, {$K_{\mathbb{F}_1}$}
              and {J}acobi forms},
   JOURNAL = {Comm. Math. Phys.},
  FJOURNAL = {Communications in Mathematical Physics},
    VOLUME = {369},
      YEAR = {2019},
    NUMBER = {2},
     PAGES = {675--719},
}

@article {CI,
    AUTHOR = {{Coates}, T. and {Iritani}, H.},
     TITLE = {Gromov-{W}itten invariants of local {$\Bbb P^2$} and modular
              forms},
   JOURNAL = {Kyoto J. Math.},
  FJOURNAL = {Kyoto Journal of Mathematics},
    VOLUME = {61},
      YEAR = {2021},
    NUMBER = {3},
     PAGES = {543--706}
}

@article {Bou20,
    AUTHOR = {Bousseau, P.},
     TITLE = {The quantum tropical vertex},
   JOURNAL = {Geom. Topol.},
  FJOURNAL = {Geometry \& Topology},
    VOLUME = {24},
      YEAR = {2020},
    NUMBER = {3},
     PAGES = {1297--1379}
}

@article {Bou21,
    AUTHOR = {Bousseau, P.},
     TITLE = {On an example of quiver {D}onaldson-{T}homas/relative
              {G}romov-{W}itten correspondence},
   JOURNAL = {Int. Math. Res. Not. IMRN},
  FJOURNAL = {International Mathematics Research Notices. IMRN},
      YEAR = {2021},
    NUMBER = {15},
     PAGES = {11845--11888}
}

@article {JS,
    AUTHOR = {Joyce, D. and Song, Y.},
     TITLE = {A theory of generalized {D}onaldson-{T}homas invariants},
   JOURNAL = {Mem. Amer. Math. Soc.},
  FJOURNAL = {Memoirs of the American Mathematical Society},
    VOLUME = {217},
      YEAR = {2012},
    NUMBER = {1020},
     PAGES = {iv+199},
      ISSN = {0065-9266},
     CODEN = {MAMCAU},
      ISBN = {978-0-8218-5279-8},
}

@article{KS,
  title={Stability structures, motivic {D}onaldson-{T}homas invariants and cluster transformations},
  author={Kontsevich, M. and Soibelman, Y.},
  journal={arXiv preprint arXiv:0811.2435},
  year={2008}
}

@article{Wan2,
  title={{F}inite generation and holomorphic anomaly equation for equivariant {G}romov-{W}itten invariants of {$K_{\mathbb{P}^1 \times \mathbb{P}^1}$}},
  author={Wang, X.},
  journal={arXiv preprint arXiv:1908.03691},
  year={2019}
}

@article {Lho,
    AUTHOR = {Lho, H.},
     TITLE = {Gromov-{W}itten invariants of {C}alabi-{Y}au manifolds with
              two {K}\"{a}hler parameters},
   JOURNAL = {Int. Math. Res. Not. IMRN},
  FJOURNAL = {International Mathematics Research Notices. IMRN},
      YEAR = {2021},
    NUMBER = {10},
     PAGES = {7552--7596}
}

@article {LTvir,
    AUTHOR = {Liu, X. and Tian, G.},
     TITLE = {Virasoro constraints for quantum cohomology},
   JOURNAL = {J. Differential Geom.},
  FJOURNAL = {Journal of Differential Geometry},
    VOLUME = {50},
      YEAR = {1998},
    NUMBER = {3},
     PAGES = {537--590}
}

@article {IP2,
    AUTHOR = {Ionel, E.-N. and Parker, T.-H.},
     TITLE = {The {G}opakumar-{V}afa formula for symplectic manifolds},
   JOURNAL = {Ann. of Math. (2)},
  FJOURNAL = {Annals of Mathematics. Second Series},
    VOLUME = {187},
      YEAR = {2018},
    NUMBER = {1},
     PAGES = {1--64}
}

@article {DW19,
   author = {Doan, A. and Walpuski, T.},
   title = {Counting embedded curves in symplectic 6-manifolds},
   journal={arXiv preprint arXiv:1910.12338},
   year={2019}
}

@article {Z,
    AUTHOR = {Zinger, A.},
     TITLE = {A comparison theorem for {G}romov-{W}itten invariants in the symplectic category},
   JOURNAL = {Adv. Math.},
  FJOURNAL = {Advances in Mathematics},
    VOLUME = {228},
      YEAR = {2011},
    NUMBER = {1},
     PAGES = {535--574}
}

@article{DIW,
  title={The {G}opakumar-{V}afa finiteness conjecture},
  author={Doan, A. and Ionel, E.-N and Walpuski, T.},
  journal={arXiv preprint arXiv:2103.08221},
  year={2021}
}

@article {LP,
    AUTHOR = {Lanteri, A. and Palleschi, M.},
     TITLE = {About the adjunction process for polarized algebraic surfaces},
   JOURNAL = {J. Reine Angew. Math.},
  FJOURNAL = {Journal f\"{u}r die Reine und Angewandte Mathematik. [Crelle's
              Journal]},
    VOLUME = {352},
      YEAR = {1984},
     PAGES = {15--23}
}

@article {FW,
    AUTHOR = {Fan, H. and Wu, L.},
     TITLE = {Witten-{D}ijkgraaf-{V}erlinde-{V}erlinde equation and its
              application to relative {G}romov-{W}itten theory},
   JOURNAL = {Int. Math. Res. Not. IMRN},
  FJOURNAL = {International Mathematics Research Notices. IMRN},
      YEAR = {2021},
    NUMBER = {13},
     PAGES = {9834--9852}
}

@article {BFGW,
    AUTHOR = {Bousseau, P. and Fan, H. and Guo, S. and Wu, L.},
     TITLE = {Holomorphic anomaly equation for {$(\mathbb{P}^2,E)$} and the
              {N}ekrasov-{S}hatashvili limit of local {$\mathbb{P}^2$}},
   JOURNAL = {Forum Math. Pi},
  FJOURNAL = {Forum of Mathematics. Pi},
    VOLUME = {9},
      YEAR = {2021},
     PAGES = {Paper No. e3, 57}
}

@article {GrP,
    AUTHOR = {Gross, M. and Pandharipande, R.},
     TITLE = {Quivers, curves, and the tropical vertex},
   JOURNAL = {Port. Math.},
  FJOURNAL = {Portugaliae Mathematica. A Journal of the Portuguese
              Mathematical Society},
    VOLUME = {67},
      YEAR = {2010},
    NUMBER = {2},
     PAGES = {211--259}
}

@article {RW13,
    AUTHOR = {Reineke, M. and Weist, T.},
     TITLE = {Refined {GW}/{K}ronecker correspondence},
   JOURNAL = {Math. Ann.},
  FJOURNAL = {Mathematische Annalen},
    VOLUME = {355},
      YEAR = {2013},
    NUMBER = {1},
     PAGES = {17--56}
}

@article {RSW,
    AUTHOR = {Reineke, M. and Stoppa, J. and Weist, T.},
     TITLE = {M{PS} degeneration formula for quiver moduli and refined
              {GW}/{K}ronecker correspondence},
   JOURNAL = {Geom. Topol.},
  FJOURNAL = {Geometry \& Topology},
    VOLUME = {16},
      YEAR = {2012},
    NUMBER = {4},
     PAGES = {2097--2134}
}

@article {RW21,
    AUTHOR = {Reineke, M. and Weist, T.},
     TITLE = {Moduli spaces of point configurations and plane curve counts},
   JOURNAL = {Int. Math. Res. Not. IMRN},
  FJOURNAL = {International Mathematics Research Notices. IMRN},
      YEAR = {2021},
    NUMBER = {13},
     PAGES = {10339--10372}
}

@incollection {R17,
    AUTHOR = {Reineke, M.},
     TITLE = {Quiver moduli and small desingularizations of some {GIT} quotients},
 BOOKTITLE = {Representation theory---current trends and perspectives},
    SERIES = {EMS Ser. Congr. Rep.},
     PAGES = {613--635},
 PUBLISHER = {Eur. Math. Soc., Z\"{u}rich},
      YEAR = {2017}
}

@article {BGP,
    AUTHOR = {Bernstein, I.N. and Gelfand, I.M. and Ponomarev, V.A.},
     TITLE = {Coxeter functors, and {G}abriel's theorem},
   JOURNAL = {Uspehi Mat. Nauk},
  FJOURNAL = {Akademiya Nauk SSSR i Moskovskoe Matematicheskoe Obshchestvo.
              Uspekhi Matematicheskikh Nauk},
    VOLUME = {28},
      YEAR = {1973},
    NUMBER = {2(170)},
     PAGES = {19--33}
}

@article {ER,
    AUTHOR = {Engel, J. and Reineke, M.},
     TITLE = {Smooth models of quiver moduli},
   JOURNAL = {Math. Z.},
  FJOURNAL = {Mathematische Zeitschrift},
    VOLUME = {262},
      YEAR = {2009},
    NUMBER = {4},
     PAGES = {817--848}
}

@article {MR,
    AUTHOR = {Meinhardt, S. and Reineke, M.},
     TITLE = {Donaldson-{T}homas invariants versus intersection cohomology
              of quiver moduli},
   JOURNAL = {J. Reine Angew. Math.},
  FJOURNAL = {Journal f\"{u}r die Reine und Angewandte Mathematik. [Crelle's
              Journal]},
    VOLUME = {754},
      YEAR = {2019},
     PAGES = {143--178}
}

@article {W13,
    AUTHOR = {Weist, T.},
     TITLE = {Localization in quiver moduli spaces},
   JOURNAL = {Represent. Theory},
  FJOURNAL = {Representation Theory. An Electronic Journal of the American
              Mathematical Society},
    VOLUME = {17},
      YEAR = {2013},
     PAGES = {382--425}
}

@incollection {R08,
    AUTHOR = {Reineke, M.},
     TITLE = {Moduli of representations of quivers},
 BOOKTITLE = {Trends in representation theory of algebras and related
              topics},
    SERIES = {EMS Ser. Congr. Rep.},
     PAGES = {589--637},
 PUBLISHER = {Eur. Math. Soc., Z\"{u}rich},
      YEAR = {2008}
      }

@article {R10,
    AUTHOR = {Reineke, M.},
     TITLE = {Poisson automorphisms and quiver moduli},
   JOURNAL = {J. Inst. Math. Jussieu},
  FJOURNAL = {Journal of the Institute of Mathematics of Jussieu. JIMJ.
              Journal de l'Institut de Math\'{e}matiques de Jussieu},
    VOLUME = {9},
      YEAR = {2010},
    NUMBER = {3},
     PAGES = {653--667}
}

@article {T12,
    AUTHOR = {Teleman, C.},
     TITLE = {The structure of 2{D} semi-simple field theories},
   JOURNAL = {Invent. Math.},
  FJOURNAL = {Inventiones Mathematicae},
    VOLUME = {188},
      YEAR = {2012},
    NUMBER = {3},
     PAGES = {525--588},
      ISSN = {0020-9910}
}
\bibliographystyle{amsxport}

\end{document}